\documentclass{article}
\usepackage{authblk,amsfonts,amsmath,amssymb,amsthm,dsfont,fancyhdr,}
\usepackage{graphicx}
\usepackage{tikz} 
\usepackage{latexsym}
\usepackage{gensymb}
\usepackage{fullpage}
\usepackage{enumerate}
\usepackage{enumitem}
\usepackage{xcolor}
\usepackage{caption}
\usepackage{float}
\usepackage[titletoc]{appendix}
\usetikzlibrary[trees]
\usepackage{array}
\usepackage{lipsum}
\usepackage{mathrsfs}
\usepackage{indentfirst}
\usepackage[hyphens]{url}
\usepackage[colorlinks=true,, allcolors=blue]{hyperref}

\newcommand\keywords[1]{\textbf{Keywords}: #1}
\newcommand\MSC[1]{\textbf{MSC2020}: #1}

\newtheorem{theorem}{Theorem}[section]
\newtheorem{corollary}[theorem]{Corollary}
\newtheorem{lemma}[theorem]{Lemma}
\newtheorem{remark}[theorem]{Remark}

\newtheorem{proposition}[theorem]{Proposition}
\newtheorem{definition}[theorem]{Definition}

\newtheorem{problem}[theorem]{Problem}

\usepackage{microtype}
\date{}
\begin{document}
\title{A structure theory for signed graphs with fixed smallest eigenvalue}

\author[a]{Meng-Yue Cao}
\author[a,b]{Jack H. Koolen}
\author[a]{Jing-Yuan Liu}
\author[c,d]{Qianqian Yang}
\affil[a]{\footnotesize{School of Mathematical Sciences, University of Science and Technology of China, Hefei, 230026, People's Republic of China}}
\affil[b]{\footnotesize{CAS Wu Wen-Tsun Key Laboratory of Mathematics, University of Science and Technology of China, Hefei, 230026, People's Republic of China}}
\affil[c] {\footnotesize{Department of Mathematics, Shanghai University, Shanghai 200444, People's Republic of China}}
\affil[d] {\footnotesize{Newtouch Center for Mathematics of Shanghai University, Shanghai 200444, People's Republic of China}}
\maketitle

	\newcommand\blfootnote[1]{%
		\begingroup
		\renewcommand\thefootnote{}\footnote{#1}%
		\addtocounter{footnote}{-1}%
		\endgroup}
	\blfootnote{ E-mail addresses: {\tt caomengyue@ustc.edu.cn} (M.-Y. Cao), {\tt koolen@ustc.edu.cn} (J.H. Koolen), {\tt liujingyuan@mail.ustc.edu.\newline cn} (J.-Y. Liu), {\tt  qqyang@shu.edu.cn} (Q. Yang)}
\vspace{-50pt}

\begin{abstract}
In this paper, we give a structure theory for signed graphs with fixed smallest eigenvalue. As a consequence, we prove that for every $\lambda\in(-1-\sqrt{2},-2]$, if a connected signed graph has smallest eigenvalue at least $\lambda$ and sufficiently large minimum valency, then its smallest eigenvalue is at least $-2$ and it is $1$-integrable. Thus, every such signed graph admits a representation by a family of $\{0,\pm1\}$-vectors of squared norm $2$ and may therefore be viewed as a natural signed generalization of generalized line graphs.
\end{abstract}
\keywords{Signed graph; Smallest eigenvalue; Structure theory; Hoffman signed graph}\newline
\MSC{05C50, 05C22, 52C35, 05C62, 05C75}

\section{Introduction}\label{sec intro}

  All graphs mentioned in this paper are finite, undirected, simple and non-empty. \par
A \emph{signed graph} $(G,\sigma)$ is a pair of a graph $G=(V(G),E(G))$ and a signing $\sigma:E(G)\rightarrow\{+,-\}$. We call $G$ the \emph{underlying graph} of $(G,\sigma)$. We say $(G,\sigma)$ is \emph{isomorphic} to $(H,\tau)$ if there exists an \emph{isomorphism} $\psi: G\rightarrow H$ such that $\sigma(\{x,y\})=\tau(\{\psi(x),\psi(y)\})$ for all $\{x,y\}\in E(G)$. The \emph{valency} of a vertex $x$ in $(G,\sigma)$ is the cardinality of the set $\{y\in V(G)\mid \{x,y\}\in E(G)\}$. The \emph{positive} (resp. \emph{negative}) \emph{graph} of $(G,\sigma)$ is the unsigned graph $(G,\sigma)^+$ (resp. $(G,\sigma)^-$) with vertex set $V(G)$ such that the edge set $E((G,\sigma)^+)=\{\{x,y\}\in E(G)\mid \sigma(\{x,y\})=+\}$ (resp. $E((G,\sigma)^-)=\{\{x,y\}\in E(G)\mid \sigma(\{x,y\})=-\}$). We mean by a \emph{subgraph} of a signed graph $(G,\sigma)$ on vertex set $U\subseteq V(G)$, the signed graph $(H,\sigma_H)$ with vertex set $U$ such that $E(H)\subseteq\binom{V(H)}{2}\cap E(G)$ and $\sigma_H=\sigma|_{E(H)}$. If $E(H)=\binom{V(H)}{2}\cap E(G)$, we say $(H,\sigma_H)$ is an \emph{induced subgraph} of $(G,\sigma)$. \par

Let $U\subseteq V(G)$. We say the signed graph $(G,\tau)$ is obtained from $(G,\sigma)$ by \emph{switching} with respect to $U$, if the signing $\tau$ satisfies the following: for any edge $\{x,y\}$ of $G$, $\tau (\{x,y\})=\sigma(\{x,y\})$ if $x,y$ are both in $U$ or both in $V(G)\backslash U$, and $\tau(\{x,y\})\neq\sigma(\{x,y\})$ otherwise. If $(G,\tau)$ can be obtained from $(G,\sigma)$ by switching, we say that $(G,\tau)$ and $(G,\sigma)$ are \emph{switching equivalent}. 
Given a signed graph $(H,\tau)$, we say the signed graph $(G,\sigma)$ is \emph{$(H,\tau)$-switching-free}, if $(G,\sigma)$ contains no induced subgraphs switching equivalent to $(H,\tau)$. \par

As in the case of unsigned graphs, we define the \emph{adjacency matrix} $A=A(G,\sigma)$ of $(G,\sigma)$ to be the symmetric matrix indexed by $V(G)$ such that 
 $$A_{xy}=\left\{\begin{array}{ll}
    1, & \text{if } \{x,y\}\in E(G)\text{ and }\sigma(\{x,y\})=+, \\
    -1, & \text{if } \{x,y\}\in E(G)\text{ and }\sigma(\{x,y\})=-, \\
   0,  & \text{otherwise}.
  \end{array}\right.$$
The \emph{eigenvalues} of $(G,\sigma)$ are the eigenvalues of the adjacency matrix $A$. \par

In this paper, we study signed graphs with a fixed smallest eigenvalue. We will see that several results for the unsigned case can be generalized to signed graphs.\par
 In order to state the results, we need to introduce several notations. For a graph $G$, we write $(G,+)$ the signed graph $(G,\sigma)$ such that $\sigma(\{x,y\})=+$ for all edges $\{x,y\}\in E(G)$, and $(G,-)$ the signed graph $(G,\sigma)$ such that $\sigma(\{x,y\})=-$ for all edges $\{x,y\}\in E(G)$.
 With $K_n$ we will denote the complete graph on $n$ vertices. Let $t$ be a positive integer. We define $\widetilde{K}_{2t}^{(\varepsilon)}$ for $\varepsilon\in\{0,-\}$ as the signed graph $(G,\sigma)$ on vertex set $\{0,1,\ldots,2t\}$ such that 
$$\{i,j\}\in E(G) \text{ and } \sigma(\{i,j\})=\left\{\begin{array}{ll}
   +, & \text{if }1\leq i,j\leq 2t, \\
 +, & \text{if }i=0,1\leq j\leq t, \\
   -,  & \text{if }i=0,t<j\leq 2t\text{ and }\varepsilon=-.
  \end{array}\right.$$
Later it will be clear why we use this notation. For examples of $\widetilde{K}_{2t}^{(\varepsilon)}$ for $t=2$, see Fig. \ref{fig1:examples of K2t tilde}. 

\begin{figure}[h]
    \centering
    \includegraphics[width=0.4\linewidth]{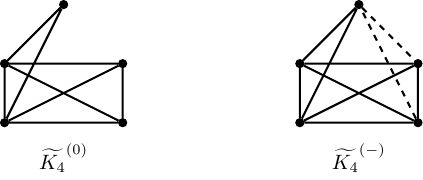}
    \caption{The signed graphs of $\widetilde{K_4}^{(0)}$ and $\widetilde{K_4}^{(-)}$}
    \vspace{-0.3cm}
    \caption*{\small\emph{(Edges with sign $+$ are
represented by solid segments and edges with sign $-$ are represented by dashed segments.)}}
    \label{fig1:examples of K2t tilde}
\end{figure}

A \emph{$t$-plex} is an unsigned graph such that each vertex has at most $t-1$ non-neighbors. 
Now we are able to state our first main result.
 \begin{theorem}\label{equaity of fobidden subgraphs and min ev}
  Let $(G,\sigma)$ be a signed graph with smallest eigenvalue $\lambda_{\min}(G,\sigma)$. The following hold.
  \begin{enumerate}[label=\rm{(\roman*)}]
    \item   For any real number $\lambda\leq -1$, there exists a positive integer $t=t(\lambda)$, such that if $\lambda_{\min}(G,\sigma)\geq\lambda$, then $(G,\sigma)$ is $\{\widetilde{K}_{2t}^{(0)},\widetilde{K}_{2t}^{(-)}$, $(K_{t+1},-)$, $(K_{1,t},+)\}$-switching-free.
    \item For any positive integer $t$, there exists a non-positive real number $\lambda=\lambda(t)$, such that if $(G,\sigma)$ is $\{\widetilde{K}_{2t}^{(0)},\widetilde{K}_{2t}^{(-)}$, $(K_{t+1},-)$, $(K_{1,t},+)\}$-switching-free, then $\lambda_{\min}(G,\sigma)\geq\lambda$.
\end{enumerate}
\end{theorem}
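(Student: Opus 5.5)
We prove the two parts separately. Part (i) is a direct interlacing argument. Part (ii) is a Ramsey-type structural argument followed by a matrix decomposition.

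\textbf{Part (i).} Switching with respect to $U$ replaces $A$ by $DAD$ with $D$ diagonal, $D_{xx}=\pm1$. So switching-equivalent signed graphs are cospectral. By Cauchy interlacing, the smallest eigenvalue of any induced subgraph is at least $\lambda_{\min}(G,\sigma)$. It therefore suffices to show that the smallest eigenvalues of the four forbidden signed graphs tend to $-\infty$ as $t\to\infty$; then we choose $t$ so that all four lie below $\lambda$.
\begin{itemize}
\item $(K_{t+1},-)$ has adjacency matrix $I-J$, so its smallest eigenvalue is $-t$.
\item $(K_{1,t},+)$ has smallest eigenvalue $-\sqrt t$.
\item For $\widetilde K_{2t}^{(\varepsilon)}$, use the test vector $x_0=-c$, $x_i=1$ for $1\le i\le t$, and $x_i=-1$ for $t<i\le 2t$. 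On the clique the sum of the entries is $0$, so the clique contributes $-2t$ to $x^TAx$. The edges at vertex $0$ contribute $-2ct$ if $\varepsilon=0$ and $-4ct$ if $\varepsilon=-$. Hence
\[ \frac{x^TAx}{x^Tx}\le \frac{-2t-2ct}{c^2+2t}. \]
Taking $c=\sqrt{2t}$ gives a value of order $-\sqrt{t/8}$, which tends to $-\infty$.
\end{itemize}

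\textbf{Part (ii), structural step.} Fix $t$ and assume $(G,\sigma)$ avoids all four configurations up to switching. For a vertex $x$, switch so that every edge at $x$ is positive. Then the neighbourhood $N(x)$ has independence number less than $t$, because otherwise we would find a switched $(K_{1,t},+)$. By Ramsey's theorem, any sufficiently large subset of $N(x)$ contains a large clique.

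On a large signed clique, forbidding $(K_{t+1},-)$ up to switching should force the following. After switching, the clique contains a large positive clique $C$, and every other vertex of the clique meets $C$ with essentially one sign.

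Next let $y$ be a vertex outside a large positive clique $C$. Forbidding $\widetilde K_{2t}^{(0)}$ and $\widetilde K_{2t}^{(-)}$ means $y$ cannot have at least $t$ positive neighbours together with at least $t$ neighbours that are negative or non-adjacent. Hence, after switching $y$, either $y$ is adjacent with sign $+$ to all but fewer than $t$ vertices of $C$, or $y$ has fewer than $t$ neighbours in $C$.

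From this dichotomy I would build maximal \emph{quasi-cliques}: vertex sets that, after switching, are positive cliques up to a bounded number of defects per vertex. The aim is to show two things, each with a bound depending only on $t$:
\begin{itemize}
\item every vertex lies in a bounded number $p(t)$ of quasi-cliques;
\item every vertex has a bounded number $q(t)$ of neighbours not covered by a common quasi-clique.
\end{itemize}

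\textbf{Part (ii), spectral step.} Write $A=\sum_C D_C(J_C-I_C)D_C+R$, where:
\begin{itemize}
\item the sum runs over the quasi-cliques $C$;
\item $D_C$ is the switching diagonal for $C$;
\item $R$ collects the defect and uncovered entries, together with the correction for edges counted in several cliques.
\end{itemize}
Each term $D_C(J_C-I_C)D_C$ is at least $-I_C$ in the positive-semidefinite order. Summing, their total is at least $-p(t)I$. The remainder $R$ has row $\ell_1$-norms bounded by some $r(t)$, so by Gershgorin $R\succeq -r(t)I$. Hence $\lambda_{\min}(G,\sigma)\ge -p(t)-r(t)=:\lambda(t)$.

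\textbf{Main obstacle.} The hard part is the structural step. Specifically, we must prove that each vertex lies in boundedly many quasi-cliques and has boundedly many uncovered edges. Sign inconsistencies between overlapping cliques are what could make $R$ unbounded. I would handle this as in the unsigned Hoffman-graph approach: two large quasi-cliques sharing many vertices must merge, which is forced by the $\widetilde K_{2t}$ dichotomy. Then a vertex in many pairwise nearly disjoint quasi-cliques would yield an induced switched $(K_{1,t},+)$, obtained by picking one suitable neighbour from each quasi-clique.
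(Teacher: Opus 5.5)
Part (i) is correct and is exactly the paper's argument: interlacing, plus the smallest eigenvalues of the four forbidden graphs tending to $-\infty$. (Your test vector in fact gives $-\tfrac12-\sqrt{t/2}$ rather than $-\sqrt{t/8}$, which only helps.)

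Your spectral step for (ii) also coincides with the paper's. Treat the quasi-cliques as fat vertices with slim--fat incidence matrix $M$. Then $\sum_C D_CJ_CD_C=MM^T$, and your $R$ is the special matrix $A-MM^T$ with the fat degrees added back on the diagonal. So $-p(t)-r(t)$ is essentially the paper's Gershgorin bound on the special matrix.

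The genuine gap is the structural step. It carries essentially all of the paper's work (Sections 2--3 and Theorem 5.1), and you only state it as an aim.

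\textbf{Definition of quasi-cliques.} They need a canonical definition. The paper anchors each one at a maximal positive clique $C$ with at least $n$ vertices. It takes $N_{(m)}(V(C))=N^{(+)}_{(m)}(V(C))\cup N^{(-)}_{(m)}(V(C))$ (here $m=t$), switched on the negative part. An abstract family of ``maximal switched near-cliques'' gives no control over multiplicities, overlaps, or edges between members. That control only comes once one shows the near-cliques are determined by large positive cliques in this way.

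\textbf{Bounded defects.} For these sets, bounded defects is a theorem, and its proof needs an idea absent from your sketch. Suppose $x\in N^{(+)}_{(m)}(V(C))$ had many vertices of some $N^{(\varepsilon)}_{(m)}(V(C))$, all in the same wrong relation to $x$.
\begin{enumerate}
\item Three-colour Ramsey among them yields a positive $K_m$. A negative $K_{t+1}$ is forbidden. An independent $t$-set is impossible: each member has at most $2m-2$ non-$\varepsilon$-neighbours in $C$, so they share a common $\varepsilon$-neighbour in $C$ and form a switched $(K_{1,t},+)$.
\item A propagation lemma then finishes. Apply your $\widetilde K_{2t}$ dichotomy to the large positive clique formed (after switching if necessary) by this $K_m$ and its common $\varepsilon$-neighbours in $C$. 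This forces $x$ to have at least $m$ negative neighbours or at least $m$ non-neighbours in $C$, contradicting the dichotomy for $x$ itself.
\end{enumerate}

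\textbf{Bounded overlaps.} These use the same ``Ramsey first, then propagate'' mechanism. The $\widetilde K_{2t}$ dichotomy only applies once a positive $m$-clique has been found inside the overlap. Finding it uses the $(K_{t+1},-)$ and $(K_{1,t},+)$ exclusions, so merging is not forced by the dichotomy alone.

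\textbf{The bound $p(t)$.} The $t$ neighbours you pick must be pairwise non-adjacent. This relies on the fact that a vertex of $C_j$ outside $N_{(m)}(V(C_i))$ has at most $2m-2$ neighbours in $C_i$, which is again a property of the anchored definition.

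\textbf{Minor slip.} $\widetilde K_{2t}$-freeness only says that at most one of the three classes of $y$ in $C$ (positive, negative, non-adjacent) has size $\ge t$. So both of your bounds ``fewer than $t$'' should read ``at most $2t-2$''; only constants change.
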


\begin{remark}\label{remark main thm 1.1}
  \begin{enumerate}[label=\rm{(\roman*)}]
  \item     Note that for any signed graph in the switching class of $\widetilde{K}_{2t}^{(0)},\widetilde{K}_{2t}^{(-)}$, $(K_{t+1},-)$ or $(K_{1,t},+)$, its smallest eigenvalue goes to $-\infty$ when $t$ goes to $+\infty$. This implies that $\rm{(i)}$ follows from the interlacing theorem \cite[Theorem $9.9.1$]{godsil2013}. We will give a proof of $\rm{(ii)}$ in Section \ref{sec of pf thm}. 
  \item Hoffman \cite{hoffman1977onsigned} proved an equivalent version of Theorem \ref{equaity of fobidden subgraphs and min ev} $\rm{(i)}$ using a different formulation, in which the $\{\widetilde{K}_{2t}^{(0)},\widetilde{K}_{2t}^{(-)}, (K_{t+1},-),(K_{1,t},+)\}$-switching-free condition is replaced by ten particular signed graphs.
\end{enumerate}
\end{remark}

We also show the following refinement of Theorem \ref{equaity of fobidden subgraphs and min ev}.

\begin{theorem}\label{structure theory thm}
 Let $\lambda\leq -1$ be a real number. There exists a positive integer $d_\lambda$ such that if $(G,\sigma)$ is a signed graph with smallest eigenvalue at least $\lambda$ and minimum valency at least $d_\lambda$, then there exists a set of induced subgraphs $N_1,N_2,\ldots,N_r$ of $(G,\sigma)$, where $r$ is a positive integer, satisfying the following conditions.
 \begin{enumerate}[label=\rm{(\roman*)}]
   \item Each vertex of $(G,\sigma)$ lies in at least one and at most $\lfloor-\lambda\rfloor$ $N_i$'s. 
   \item The induced subgraph $N_i$ is switching equivalent to a signed graph whose positive graph is a $(\lfloor \lambda^2+2\lambda+2\rfloor)$-plex, for $i=1,2,\ldots,r$.
   \item The intersection $V(N_i)\cap V(N_j)$ contains at most $4\lfloor-\lambda\rfloor-4$ vertices for $1\leq i< j\leq r$.
   \item the subgraph $(G',\sigma')$ has maximum valency at most $d_\lambda-1$, where $G'= (V(G),E(G)\backslash\bigcup_{i=1}^r E(N_i))$ and $\sigma'=\sigma|_{E(G')}$.
\end{enumerate} \end{theorem}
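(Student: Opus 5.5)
We follow the strategy of Koolen, Yang and Yang for unsigned graphs with fixed smallest eigenvalue, adapted to signed graphs via switching. The main tool is a signed version of Hoffman graphs: a graph with slim (ordinary) vertices and fat vertices, where each fat vertex represents a huge, essentially complete block. The plan has three steps. First, use Ramsey theory together with the forbidden configurations of Theorem \ref{equaity of fobidden subgraphs and min ev}(i) to find many large cliques. Second, use high minimum valency to show that every vertex sees some large clique with positive edges after switching. Third, group the cliques into "quasi-cliques" $N_i$ and bound their interactions through the smallest eigenvalues of associated fat Hoffman signed graphs.

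\textbf{Step 1 (large cliques).} Let $t=t(\lambda)$ be as in Theorem \ref{equaity of fobidden subgraphs and min ev}(i). Since $(G,\sigma)$ has no switched $(K_{1,t},+)$, the neighbourhood of every vertex $x$ has independence number less than $t$. Switch so that all edges at $x$ are positive. Two facts then apply inside $N(x)$:
\begin{itemize}
\item the forbidden $(K_{t+1},-)$ and the forbidden $\widetilde K_{2t}^{(\varepsilon)}$ (with $x$ in the role of vertex $0$) control the sign patterns that can occur;
\item by Ramsey's theorem applied to the three-colouring non-edge / positive edge / negative edge, a neighbourhood of size at least $d_\lambda$ contains a clique of size $q$ that is all-positive after switching, where $q\to\infty$ as $d_\lambda\to\infty$.
\end{itemize}
We then call a maximal clique of order at least $q$ that is positive after switching a \emph{large clique}. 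The conclusion of this step is that every vertex lies in, or is adjacent to, some large clique.

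\textbf{Step 2 (Hoffman signed graph and the bounds).} Define an equivalence relation on large cliques: $C\sim C'$ if every vertex of $C$ has at least $|C'|-m$ neighbours in $C'$ (and symmetrically), for a suitable constant $m=m(\lambda)$. Each $N_i$ is the subgraph induced by the union of one equivalence class. Contract each class to a fat vertex, joining it to every slim vertex that has many neighbours in the class; the sign of the join records whether the vertex sits on the positive or negative side after switching. A limit argument as $q\to\infty$, of the type in \cite{hoffman1977onsigned} and the Koolen–Yang–Yang work, shows that the smallest eigenvalue of this fat signed Hoffman graph is at least $\lambda$. The claimed bounds then come from the special matrices of small fat signed Hoffman graphs.
\begin{itemize}
\item A slim vertex adjacent to $k$ fat vertices has diagonal special-matrix entry $-k$, so $k\le\lfloor-\lambda\rfloor$. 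This gives (i), once we show every vertex has at least one fat neighbour (Step 1).
\item Two slim vertices $u,v$ with a common fat neighbour, nonadjacent or joined by a negative edge after switching, give the $2\times2$ special matrix $\begin{pmatrix}-1&-1\\-1&-1\end{pmatrix}$ shifted by the edge. Bounding the number of such "bad" pairs per vertex by the eigenvalue condition yields the plex bound $\lfloor\lambda^2+2\lambda+2\rfloor$ in (ii). We would compute the smallest eigenvalue of a fat vertex with one slim vertex attached, together with $s$ slim vertices that are non-adjacent to it but adjacent to another fat vertex, and solve for $s$.
\item Two fat vertices sharing many slim neighbours force, by the same computation, $|V(N_i)\cap V(N_j)|\le 4\lfloor-\lambda\rfloor-4$. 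This gives (iii).
\end{itemize}

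\textbf{Step 3 (residual valency).} For (iv), suppose a vertex $x$ had at least $d_\lambda$ edges outside $\bigcup E(N_i)$. Then Ramsey applied to these residual neighbours yields a new large clique through $x$. This clique is either already in some class, so the edges lie in that $N_i$, or forms a new class. Either way we reach a contradiction with the construction, after choosing $d_\lambda$ large relative to $q$ and $m$.

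\textbf{Main obstacle.} The hardest step is Step 2, and specifically the limiting argument that transfers $\lambda_{\min}\ge\lambda$ to the fat Hoffman signed graph while tracking signs. It must handle vertices that see a clique with mixed signs; here the forbidden $\widetilde K_{2t}^{(\pm)}$ should force one sign to dominate after switching. It must also produce the exact constants in (ii) and (iii). I would first establish the eigenvalue-limit lemma: for a slim set $S$ and fat cliques of size $q\to\infty$, the smallest eigenvalue converges to that of the special matrix. Only then would I derive the constants by explicit small computations.
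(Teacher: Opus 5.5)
Your outline (Ramsey in neighbourhoods, grouping large cliques, an associated fat Hoffman signed graph, small special-matrix computations for (i)--(iii), Ramsey again for (iv)) matches the paper's. However, Step~2 has a genuine gap.

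You assert that a limit argument shows the whole associated Hoffman signed graph $\mathfrak{g}$ has smallest eigenvalue at least $\lambda$, and you single out the Hoffman--Ostrowski limit as the lemma to prove. That limit is Theorem~\ref{Hoffman-Ostrowski Theorem of signed case}, quoted from the literature. It only concerns $G(\mathfrak{h},p)$ for a \emph{fixed} $\mathfrak{h}$: every fat vertex becomes a positive clique joined uniformly, with one sign, to its slim neighbours, and distinct cliques are non-adjacent. Your $(G,\sigma)$ is not of this form, for three reasons.
\begin{enumerate}[label=\rm{(\alph*)}]
\item A vertex attached to a fat vertex may miss, or see with the wrong sign, up to $2m-2$ vertices of the clique.
\item Cliques of different classes may be joined by edges.
\item $\mathfrak{g}$ has unbounded size while the clique threshold $n$ is fixed.
\end{enumerate}

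The missing ingredient is the embedding result, Proposition~\ref{associatedProp}. For $n\ge q(m,v_f,v_s,p)$, every induced Hoffman subgraph $\mathfrak{h}$ of $\mathfrak{g}(G,\sigma,m,n)$ with at most $v_f$ fat and $v_s$ slim vertices yields $G(\mathfrak{h},p)$ as an induced subgraph of $(G,\sigma)$. It is proved by induction on the fat vertices: each clique is trimmed to a $p$-subclique that sees the chosen slim vertices uniformly and has no edges to the previously chosen subcliques.

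Because $n$ must be chosen after $v_f,v_s,p$, this only excludes finitely many bounded configurations. That is why the paper fixes $\Sigma_0,\Sigma_1,\Sigma_2$ before choosing $n$ and never claims $\lambda_{\min}(\mathfrak{g})\ge\lambda$. A global bound of that kind appears in the paper only for $-2$, and only because $\mathcal{F}(-2)$ is finite (Theorem~\ref{min forbidden finite for -2}).

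The trimming in turn needs the clique dichotomy you skip. You do not show that $C\sim C'$ is transitive, or that the sign of a fat edge is well defined. The paper gets both from $\{\widetilde{K}_{2m}^{(0)},\widetilde{K}_{2m}^{(-)}\}$-freeness, via Lemmas~\ref{neighborhood of 3m-2-clique} and~\ref{C-D lemma} and the dichotomy of Lemma~\ref{two cliques}. Two large cliques $C,C'$ either have the same $m$-neighbourhood (possibly with $+$ and $-$ interchanged), or at most $2m-2$ vertices of $C'$ lie in the $m$-neighbourhood of $C$. Lemma~\ref{before quasi-cli sw to almost posi-clique}(ii) then bounds the overlaps of distinct $m$-neighbourhoods.

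The explicit computations also need repair.
\begin{itemize}
\item For (ii), the configuration you describe (slim vertices attached to a different fat vertex and non-adjacent to $x$) has zero special-matrix entries with $x$ and excludes nothing. Use instead one fat vertex positively adjacent to $x$ and to $k=\lfloor(\lambda+1)^2\rfloor+1$ non-positive neighbours of $x$ in the switched quasi-clique. Then $-S$ is entrywise non-negative with unit diagonal, and $-S_{xy}\ge 1$ for these $k$ vertices. By Perron--Frobenius, $\lambda_{\max}(-S)\ge\lambda_{\max}(\mathbf{I}+A(K_{1,k},+))=1+\sqrt{k}>-\lambda$. Hence every vertex has at most $\lfloor(\lambda+1)^2\rfloor$ non-positive neighbours, which is exactly the $\lfloor\lambda^2+2\lambda+2\rfloor$-plex bound.
\item For (iii), ``the same computation'' is not enough. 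Two common slim neighbours $u,v$ of $F_i,F_j$ whose sign patterns towards $(F_i,F_j)$ differ in exactly one coordinate have $n^f_{\mathfrak{h}}(u,v)=0$, so nothing forces their special-matrix entry to be negative. You must first pass to $\lfloor-\lambda\rfloor$ common vertices with the same pattern. Then switch and compare $-S$ with $\mathbf{J}+\mathbf{I}$ to get $\lambda_{\min}\le-\lfloor-\lambda\rfloor-1<\lambda$. The paper's four-way pigeonhole over the sets $N^{(\varepsilon_i)}_{(m)}(V(C_i))\cap N^{(\varepsilon_j)}_{(m)}(V(C_j))$ is what produces the constant $4\lfloor-\lambda\rfloor-4$.
\end{itemize}
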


\begin{remark}
    Note that for the unsigned case, Theorem \ref{equaity of fobidden subgraphs and min ev} was shown by Hoffman \cite{hoffman1973on} and Theorem \ref{structure theory thm} was shown by Kim et al. \cite{kim2016a}.     \end{remark}

Gavrilyuk et al. \cite{Gavrilyuk2021signed} showed the following result.
\begin{theorem}[{\cite[Theorem $1.2$]{Gavrilyuk2021signed}}]\label{GMST result on -2}
 There exists an integer valued function $f$ defined on the half-open interval $(-2,-1]$ such that, for each $\lambda\in (-2, -1]$, if a connected signed graph $(G,\sigma)$ has smallest eigenvalue at least $\lambda$ and minimum valency at least $f(\lambda) $, then $(G,\sigma)$ is switching equivalent to a complete graph (with each edge signed $+$) and hence $\lambda_{\min} (G,\sigma)=-1$.  
\end{theorem}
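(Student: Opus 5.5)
The plan is to derive this from Theorem \ref{structure theory thm}, so that it becomes a consequence of the structure theory developed in this paper rather than an independent result. Fix $\lambda\in(-2,-1]$ and let $d_\lambda$ be as in Theorem \ref{structure theory thm}. For this range, $\lfloor-\lambda\rfloor=1$ and $\lambda^2+2\lambda+2=(\lambda+1)^2+1\in[1,2)$, so $\lfloor\lambda^2+2\lambda+2\rfloor=1$.

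\textbf{Step 1: reading off the structure.} Let $(G,\sigma)$ be connected with $\lambda_{\min}(G,\sigma)\ge\lambda$ and minimum valency at least $f(\lambda)$, where $f(\lambda)\ge d_\lambda$ will be chosen in Step 3. The four conditions of Theorem \ref{structure theory thm} then specialize as follows.
\begin{enumerate}[label=\rm{(\alph*)}]
\item By (i), every vertex lies in exactly one $N_i$.
\item By (ii), each $N_i$ is switching equivalent to a signed graph whose positive graph is a $1$-plex, i.e.\ complete. Since $N_i$ is induced, after switching all its edges are positive, so $N_i$ is a clique signed $+$.
\item By (iii), the $N_i$ are pairwise vertex-disjoint, as $4\lfloor-\lambda\rfloor-4=0$.
\item Every edge between different $N_i$'s lies in $G'$. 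By (iv), each vertex therefore has fewer than $d_\lambda$ neighbours outside its own $N_i$.
\end{enumerate}
Consequently each $N_i$ has at least $f(\lambda)-d_\lambda+2$ vertices. Switching each $N_i$ separately, we may assume every $N_i$ is a $+$-clique.

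\textbf{Step 2: reduction to $r=1$.} It remains to show $r=1$; then $G=N_1$ is switching equivalent to $(K_n,+)$, whose smallest eigenvalue is $-1$. Suppose $r\ge2$. By connectivity there is an edge $\{x,y\}$ with $x\in N_i$, $y\in N_j$, $i\neq j$. Put $m=f(\lambda)-2d_\lambda$. Since $x$ has fewer than $d_\lambda$ neighbours outside $N_i$, we can choose $m$ vertices $b_1,\dots,b_m\in N_j\setminus\{y\}$ not adjacent to $x$. Symmetrically, we can choose $a_1,\dots,a_m\in N_i\setminus\{x\}$ not adjacent to $y$. We also require that no $a_k$ is adjacent to any $b_l$. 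This is possible greedily: each $a_k$ kills fewer than $d_\lambda$ candidates in $N_j$, so it suffices to take $f(\lambda)$ large compared with $m d_\lambda$, or to pass to a subset via a Ramsey/greedy argument.

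\textbf{Step 3: the contradiction.} The induced subgraph $H$ on $\{x,a_1,\dots,a_m,y,b_1,\dots,b_m\}$ consists of two $+$-cliques of size $m+1$ joined by the single edge $\{x,y\}$. Switching with respect to $\{y,b_1,\dots,b_m\}$ if necessary, we may make $\{x,y\}$ positive without disturbing the cliques. So $H$ is switching equivalent to the unsigned line graph of a double star, which is a tree with $2m+1$ edges.

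The key estimate, and the main obstacle, is that $\lambda_{\min}$ of this line graph tends to $-2$ as $m\to\infty$. My first attempt is an explicit test vector. Take the incidence representation $A+2I=N^TN$ of the tree with $\sqrt{\ }$-scaled entries, and choose a vector nearly in the kernel of $N$: value $1$ on the $a_k$, $-1$ on the $b_l$, and a suitable value on $x,y$. The goal is a Rayleigh quotient of $-2+O(1/m)$. Alternatively, one can compute the characteristic polynomial directly, using the equitable partition $\{a\},\{x\},\{y\},\{b\}$ and the eigenvalue equation of the resulting $4\times4$ quotient matrix.

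Granting this, choose $m$ (hence $f(\lambda)$) so large that $\lambda_{\min}(H)<\lambda$. The interlacing theorem \cite[Theorem $9.9.1$]{godsil2013} then gives $\lambda_{\min}(G,\sigma)\le\lambda_{\min}(H)<\lambda$, a contradiction. Hence $r=1$, which proves the theorem.
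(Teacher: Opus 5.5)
Your argument is correct, and it takes a genuinely different route from the paper. The paper gives no proof of this statement: it imports it from Gavrilyuk et al., whose proof is built directly on Hoffman signed graphs and Hoffman's limit theorem. You instead derive it as a corollary of the paper's own Theorem~\ref{structure theory thm}. This is not circular. The proof of Theorem~\ref{structure theory thm} (through Theorem~\ref{structure th for fixed min ev}) imports from Gavrilyuk et al. only the limit theorem, Theorem~\ref{Hoffman-Ostrowski Theorem of signed case}, and never the present statement.

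Your specialization is accurate. For $\lambda\in(-2,-1]$ one has $\lfloor-\lambda\rfloor=1$, $\lfloor\lambda^2+2\lambda+2\rfloor=1$ and $4\lfloor-\lambda\rfloor-4=0$. So $V(G)$ is partitioned into sets inducing switching-positive cliques, and $G'$ consists exactly of the cross edges, with at most $d_\lambda-1$ of them at each vertex.

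In the paper's language, your Steps 2--3 are a hand-made instance of its machinery. The forbidden configuration is the Hoffman signed graph $\mathfrak{h}$ with adjacent slim vertices $x,y$ and private fat neighbours $F_i\sim x$, $F_j\sim y$. Its special matrix $\begin{pmatrix}-1&\pm1\\ \pm1&-1\end{pmatrix}$ has smallest eigenvalue $-2$, and your $H$ is exactly $G(\mathfrak{h},m)$. You replace Proposition~\ref{associatedProp} by a greedy choice and the limit theorem by an explicit Rayleigh quotient. The gain is that, inside this paper, the case $\lambda\in(-2,-1]$ of Theorem~\ref{thm for min ev >-1-sqrt{2}} no longer needs the external citation. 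The price is that everything is routed through the heavy structure theorem, so $f(\lambda)$ is of the size of $d_\lambda$.

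Three details should be repaired.

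First, the quantifiers in Step~2 are in the wrong order. With $m=f(\lambda)-2d_\lambda$ you cannot also make every $a_k$ non-adjacent to every $b_l$, since that needs $f(\lambda)$ of order $m\,d_\lambda$. Fix $m$ first with $\frac{1}{m+1}<\lambda+2$, and then put $f(\lambda)=(m+2)d_\lambda$. This leaves at least $f(\lambda)-2d_\lambda+3\ge m$ candidates for the $a_k$. It leaves at least $f(\lambda)-2d_\lambda+3-m(d_\lambda-1)=m+3$ candidates for the $b_l$.

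Second, $H$ (two copies of $K_{m+1}$ joined by the edge $\{x,y\}$) is not the line graph of a double star, which would be two cliques sharing a vertex. It is the line graph of the tree obtained from a path $u\,w\,v$ by attaching $m$ pendant edges at $u$ and $m$ at $v$, which has $2m+2$ edges. This does not affect the argument.

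Third, the estimate you grant follows at once from your own test vector. Put $1$ on the $a_k$, $-1$ on the $b_l$, $-m$ on $x$ and $m$ on $y$. Then $g^{T}A(H)g=-4m^2-2m$ and $g^{T}g=2m^2+2m$, so $\lambda_{\min}(H)\le-2+\frac{1}{m+1}<\lambda$. The quotient-matrix route gives the exact value $\frac{m-2-\sqrt{m^2+4m}}{2}$.
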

We extend this result to the interval $(-1-\sqrt{2},-1]$. To state the result, we need to define $s$-integrable signed graphs, analogously to the corresponding notion for integral lattices where $s$ is a positive integer. For more details of lattices, see Subsection \ref{subsec lattices}.  \par

Let $s$ be a positive integer. A signed graph $(G,\sigma)$ with smallest eigenvalue $\lambda_{\min}$ is \emph{$s$-integrable}, if there exists an integer-valued matrix $N$ such that $s(A+\lceil -\lambda_{\min}\rceil \mathbf{I})=N^TN$, where $A$ is the adjacency matrix of $G$. Note that $(G,\sigma)$ is $s$-integrable if and only if the integral lattice generated by the columns of $\frac{1}{\sqrt{s}}N$ is $s$-integrable. \par

Our result is: 

\begin{theorem}\label{thm for min ev >-1-sqrt{2}}
Let $\lambda$ be a real number in $(-1-\sqrt{2},-1]$. There exists a positive integer $d'_{\lambda}$ such that if a connected signed graph $(G,\sigma)$ has smallest eigenvalue $\lambda_{\min}(G,\sigma)\geq \lambda$ and minimum valency at least $d'_{\lambda}$, then $\lambda_{\min}(G,\sigma)\geq-2$ and $(G,\sigma)$ is $1$-integrable. \end{theorem}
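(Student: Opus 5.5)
We apply Theorem \ref{structure theory thm} with $\lambda\in(-1-\sqrt{2},-1]$ and then use the Hoffman-type limit argument familiar from the unsigned case (Hoffman's $-1-\sqrt{2}$ theorem, Koolen et al.). Since $\lambda>-1-\sqrt{2}$, we have $\lfloor-\lambda\rfloor\le 2$ and $\lambda^2+2\lambda+2<3$, so $\lfloor\lambda^2+2\lambda+2\rfloor\le 2$. Take $d'_\lambda$ much larger than $d_\lambda$. Then Theorem \ref{structure theory thm} gives induced subgraphs $N_1,\dots,N_r$ with the following properties.

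\begin{enumerate}[label=\rm{(\alph*)}]
\item Each vertex lies in one or two of the $N_i$.
\item Each $N_i$ is switching equivalent to a signed graph whose positive graph is a $2$-plex, i.e.\ a complete graph minus a matching.
\item Two distinct $N_i$ share at most $4$ vertices.
\item The leftover edges have bounded valency.
\end{enumerate}

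The first step is to pin down the internal structure of each $N_i$. After switching, the positive graph of $N_i$ is $K_m$ minus a matching, and the remaining edges of $N_i$ are negative. A large $2$-plex with many negative edges, or with many missing matching edges, contains a configuration whose smallest eigenvalue drops below $-1-\sqrt{2}$. For example, a large positive clique together with $K_{1,1}$-type negative attachments behaves like $\widetilde{K}_{2t}^{(-)}$. We therefore expect each large $N_i$ to be, up to switching, a positive clique minus a matching with only boundedly many exceptional vertices.

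The second step uses the minimum valency. It forces each vertex to lie in a large $N_i$, since the leftover valency is less than $d_\lambda$ and each vertex lies in at most two $N_i$. We then use the standard "quasi-clique" argument. If a vertex $x$ outside a large clique $C$ has $k$ neighbours in $C$, the eigenvalue bound forces either $k\le 1$ or $k\ge |C|-O(1)$, because Hoffman graphs with a fat vertex attached to $x$ must have smallest eigenvalue greater than $-1-\sqrt{2}$. The signed analogue of Hoffman graph theory (fat vertices with signed attachments) lets us pass to the limit. Each large clique becomes a fat vertex, and the associated signed Hoffman graph must have smallest eigenvalue at least $-2$ rather than merely above $-1-\sqrt{2}$. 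This relies on the gap: there are no limit configurations with smallest eigenvalue strictly between $-1-\sqrt{2}$ and $-2$ below $-2$. We would classify the minimal signed Hoffman graphs with smallest eigenvalue greater than $-1-\sqrt{2}$ and show they are all $\{0,\pm1\}$-representable with norm $2$.

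The final step builds the representation explicitly. Assign to each large quasi-clique $N_i$ a coordinate $e_i$. A vertex $x$ lying in $N_i$ and $N_j$ gets the vector $\pm e_i\pm e_j$, with signs chosen according to the switching. A vertex lying in only one $N_i$ gets $\pm e_i\pm f_x$, where $f_x$ is a new coordinate, or a shared coordinate for the pairs removed by the matching, as in generalized line graphs. We then verify that the Gram matrix equals $A+2I$ edge by edge. Property (c) and the eigenvalue constraint from the second step rule out inconsistent overlaps. It follows that $A+2I=N^TN$ with $N$ integral, so $\lambda_{\min}\ge -2$ and the graph is $1$-integrable. Connectivity ensures the signs can be chosen globally.

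The main obstacle is the second step. We must show that the bounded leftover edges and the bounded exceptional vertices cannot exist, or are compatible with the representation, when $\lambda>-1-\sqrt{2}$. This requires a careful finite case analysis of small signed Hoffman graphs with one or two fat vertices whose smallest eigenvalue exceeds $-1-\sqrt{2}$. The first approach to try is a signed version of the known classification of fat Hoffman graphs with smallest eigenvalue at least $-1-\sqrt{2}$, showing each such graph is in fact $\ge -2$ and decomposes into $\{0,\pm1\}$-vectors.
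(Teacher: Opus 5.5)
There is a genuine gap. Your architecture is right: collapse the large cliques into fat vertices, then use a spectral gap between $-2$ and $-1-\sqrt{2}$. But the step that carries the whole theorem is only announced (``we would classify''), and two things are missing.

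\textbf{The gap lemma.} You set this up the wrong way round. Fat Hoffman signed graphs with smallest eigenvalue greater than $-1-\sqrt{2}$ form an infinite family. What is needed is that the \emph{minimal} fat Hoffman signed graphs with smallest eigenvalue less than $-2$ (the set $\mathcal{F}(-2)$) are finitely many, and each has smallest eigenvalue at most $-1-\sqrt{2}$. This is short to prove.
\begin{enumerate}[label=\rm{(\alph*)}]
\item If a slim vertex has $k\ge 2$ fat neighbours, its diagonal entry in the special matrix is $-k$. Minimality then forces $S(\mathfrak{f})=(-3)$, or a $2\times 2$ matrix with $-2$ on the diagonal and a nonzero off-diagonal entry.
\item Otherwise every diagonal entry is $-1$. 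An off-diagonal entry $\pm 2$ gives $\begin{pmatrix}-1&\pm2\\ \pm2&-1\end{pmatrix}$, with eigenvalue $-3$.
\item If all entries lie in $\{0,\pm1\}$, then $S(\mathfrak{f})+\mathbf{I}$ is a signed adjacency matrix with smallest eigenvalue $<-1$. Hence it contains a switched $(K_3,-)$ or $(K_{1,2},+)$, giving $\lambda_{\min}(\mathfrak{f})\le -1-\sqrt{2}$ on three slim vertices.
\end{enumerate}

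\textbf{The transfer mechanism.} Work with the associated Hoffman signed graph $\mathfrak{g}(G,\sigma,m_\lambda,n)$, whose slim graph is \emph{all} of $(G,\sigma)$. The leftover edges and exceptional vertices you call the main obstacle then need no separate treatment.
\begin{enumerate}[label=\rm{(\alph*)}]
\item Large minimum valency makes $\mathfrak{g}$ fat, by Theorem~\ref{structure th for fixed min ev}(i).
\item Theorem~\ref{Hoffman-Ostrowski Theorem of signed case} gives $p$ with $\lambda_{\min}(G(\mathfrak{f},p))<\lambda$ for every $\mathfrak{f}\in\mathcal{F}(-2)$. Finiteness of $\mathcal{F}(-2)$ makes $p$ uniform.
\item Proposition~\ref{associatedProp} plus interlacing then excludes every member of $\mathcal{F}(-2)$ from $\mathfrak{g}$. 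So $\lambda_{\min}(\mathfrak{g})\ge -2$, and Lemma~\ref{sub-minev} gives $\lambda_{\min}(G,\sigma)\ge -2$.
\end{enumerate}
Your Step~1 on the internal structure of the $N_i$ is not needed at all.

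Your Step~3 would also fail as sketched. Take two vertices lying only in $N_i$, both carrying $+e_i$ after switching. They cannot be negatively adjacent in a norm-$2$ $\{0,\pm1\}$ representation, since $(e_i+a,e_i+b)\ge 0$ for unit $a,b\perp e_i$. Such edges must first be excluded by the eigenvalue bound; they give exactly the forbidden special matrix $\begin{pmatrix}-1&-2\\-2&-1\end{pmatrix}$. Your ``boundedly many exceptional vertices'' does not exclude them. You also never say how leftover edges are realized, and ``connectivity ensures the signs can be chosen globally'' is unsupported. In effect you would be re-deriving the $-2$ theory ($D_n$ versus $E_8$ root representations) by hand.

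The paper avoids this entirely. Once $\lambda_{\min}(G,\sigma)\ge -2$, Theorem~\ref{BCKW result on -2} says a connected signed graph with smallest eigenvalue at least $-2$ and at least $121$ vertices is $1$-integrable. Taking $d'_\lambda\ge 120$ guarantees the vertex count. For $\lambda\in(-2,-1]$ the paper instead uses Theorem~\ref{GMST result on -2}: the graph is a switched positive clique and $A+\mathbf{I}=(ND)^T(ND)$. Note that when $\lambda_{\min}=-1$, the definition of $1$-integrable asks for $A+\mathbf{I}$, not $A+2\mathbf{I}$.
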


\begin{remark}
 \begin{enumerate}[label=\rm{(\roman*)}]
     \item Note that Theorem \ref{thm for min ev >-1-sqrt{2}} states that for $-1-\sqrt{2}<\lambda\leq -2$, if a signed graph $(G,\sigma)$ has smallest eigenvalue $\lambda_{\min}(G,\sigma)\geq \lambda$ and large enough minimum valency, then its adjacency matrix $A(G,\sigma)$ satisfies that $A(G,\sigma)+2\mathbf{I}=N^TN$ for some $\{0,\pm 1\}$-matrix $N$.
     \item Theorem \ref{GMST result on -2} and Theorem \ref{thm for min ev >-1-sqrt{2}} were shown by Hoffman \cite{hoffman1977on} for the unsigned case, as a generalized line graph is an unsigned graph with adjacency matrix $A$ such that $A+2\mathbf{I}=N^TN$ for some $\{0,\pm 1\}$-matrix $N$.
     \item For $\lambda\geq-2$, Belardo et al. improved Theorem \ref{thm for min ev >-1-sqrt{2}} as follows:
\begin{theorem}[{\cite[Theorem $3.13$]{belardo2018open}}]\label{BCKW result on -2}
  Each connected signed graph $(G,\sigma)$ with smallest eigenvalue at least $-2$ is $2$-integrable. Moreover, if $(G,\sigma)$ has at least $121$ vertices, then $(G,\sigma)$ is $1$-integrable.
\end{theorem}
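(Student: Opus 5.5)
The plan is to pass from the signed graph to a root lattice and then use the classification of root lattices. Let $(G,\sigma)$ be connected with $\lambda_{\min}(G,\sigma)\geq -2$, and write $n=|V(G)|$.

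\textbf{Gram vectors and the lattice.} Since $A+2\mathbf{I}$ is positive semidefinite, it is the Gram matrix of vectors $v_x$ ($x\in V(G)$) in some Euclidean space. These vectors satisfy $(v_x,v_x)=2$ and $(v_x,v_y)=A_{xy}\in\{0,\pm1\}$ for $x\neq y$. Let $\Lambda$ be the lattice they generate. It is integral and even, and it is generated by norm-$2$ vectors. By Witt's classification, $\Lambda$ is an orthogonal direct sum of root lattices of types $A_m$, $D_m$, $E_6$, $E_7$, $E_8$. Each $v_x$ is a root, so it lies in a single irreducible summand. Adjacent vertices have non-orthogonal vectors, so they lie in the same summand. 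Since $G$ is connected, $\Lambda$ is therefore irreducible.

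\textbf{$2$-integrability.} I will use the standard integral models of the irreducible root lattices.
\begin{itemize}
\item $A_m$ is realised in $\mathbb{Z}^{m+1}$, and $D_m$ in $\mathbb{Z}^m$, with roots of the form $\pm e_i\pm e_j$.
\item For $E_8$, use Construction A from the extended Hamming $[8,4,4]$ code $H_8$:
$$E_8=\tfrac{1}{\sqrt2}\{x\in\mathbb{Z}^8 : x \bmod 2\in H_8\}.$$
This gives $\sqrt2\,E_8\subseteq\mathbb{Z}^8$.
\item $E_6$ and $E_7$ are sublattices of $E_8$, so the same embedding applies to them.
\end{itemize}
In every case $\sqrt2\,v_x\in\mathbb{Z}^k$ for a suitable $k$. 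Taking $N$ to be the matrix with columns $\sqrt2\,v_x$ gives $2(A+2\mathbf{I})=N^TN$ with $N$ integral. In the $A_m$ and $D_m$ cases one even gets $A+2\mathbf{I}=N^TN$ with $N$ a $\{0,\pm1\}$-matrix, so those cases are already $1$-integrable.

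\textbf{Counting for $n\geq 121$.} It remains to rule out the exceptional types when $n\geq121$. The vectors $v_x$ are pairwise distinct, because equal vectors would have inner product $2$. Also $v_y\neq -v_x$, because that would give inner product $-2$. So $\{v_x\}$ contains at most one root from each pair $\pm r$. The lattice $E_8$ has $240$ roots, hence $120$ pairs, and $E_6$, $E_7$ have fewer. So if $n\geq121$, $\Lambda$ must be of type $A_m$ or $D_m$, and $(G,\sigma)$ is $1$-integrable.

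The main obstacle is the $E_8$ part of the second step: the exceptional lattices are not sublattices of $\mathbb{Z}^k$, so one really needs the rescaled embedding $\sqrt2 E_8\subseteq\mathbb{Z}^8$. The Construction A description of $E_8$ settles this. Beyond that, the argument only needs Witt's classification and the irreducibility coming from connectedness.
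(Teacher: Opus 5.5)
The paper does not prove this statement. It quotes it from Belardo et al.\ and uses it only as a black box at the end of the proof of Theorem~\ref{thm for min ev >-1-sqrt{2}}. Your root-lattice argument is the standard proof of this result and it is correct: Witt's classification, irreducibility from connectivity, $\sqrt{2}E_8\subseteq\mathbb{Z}^8$ via Construction~A together with $E_6,E_7\subseteq E_8$, and at most one vertex per root pair $\pm r$, which caps the exceptional types at $120$ vertices.

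There are two small points to tidy up.

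First, the paper defines $s$-integrability by $s(A+\lceil-\lambda_{\min}\rceil\mathbf{I})=N^TN$, not by $s(A+2\mathbf{I})=N^TN$. So when $\lambda_{\min}(G,\sigma)=-1$ you must factor $A+\mathbf{I}$ rather than $A+2\mathbf{I}$; a single vertex, with $\lambda_{\min}=0$, is trivial. The case $\lambda_{\min}=-1$ is immediate:
\begin{itemize}
\item the Gram vectors of $A+\mathbf{I}$ are unit vectors, and adjacent ones agree up to sign;
\item by connectivity, $A+\mathbf{I}=dd^T$ with $d\in\{\pm1\}^{V(G)}$.
\end{itemize}
This is exactly the computation at the start of the proof of Theorem~\ref{thm for min ev >-1-sqrt{2}}.

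Second, in the $A_m$ and $D_m$ cases the vectors $v_x$ are themselves integral, so $\sqrt{2}\,v_x$ is not. There you get $2(A+2\mathbf{I})=M^TM$ by taking $M$ to be $N$ stacked on top of a second copy of $N$.
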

This result was probably known by Hoffman, as he studied signed graphs in \cite{hoffman1977onsigned}.

 \end{enumerate}
\end{remark}

\subsection{\texorpdfstring{Line systems, $\mathbb{F}_3$-weighted complete graphs and signed graphs}{Line systems, F3-weighted complete graphs and signed graphs}}

In this subsection, we look at line systems in the Euclidean space $\mathbb{R}^d$. Let $\mathcal{L}$ be a line system. Represent the lines in $\mathcal{L}$ by unit vectors $\mathbf{u}_1,\ldots\mathbf{u}_n$ and denote by $Gr$ the Gram matrix of these vectors.\par

If $\mathcal{L}$ is a set of equiangular lines, which means that there exists a real number $\alpha\neq0$ such that $(\mathbf{u}_i,\mathbf{u}_j)\in\{\pm\alpha\}$ for $1\leq i\neq j\leq n$, then the matrix $\frac{1}{\alpha}(Gr-\mathbf{I})$ is a symmetric matrix with only $\pm1$ off the diagonal. This is the so-called Seidel matrix, which was first introduced by Van Lint and Seidel in \cite{vanlint1966}. If the lines of $\mathcal{L}$ form two different angles, $\arccos{\alpha}$ and $90\degree$, then the matrix $\frac{1}{\alpha}(Gr-\mathbf{I})$ is a symmetric matrix with $0$ on the diagonal and $0$ or $\pm1$ off the diagonal, and we call it the \emph{generalized Seidel matrix} of $\{\mathbf{u}_1,\ldots\mathbf{u}_n\}$.

\begin{problem}\label{pro motivation}
    Given a positive real number $\alpha$ and a positive integer $d$, what is the maximal number $n$ such that there exists a line system of $n$ lines in the Euclidean space $\mathbb{R}^d$ with angles $\arccos{\alpha}$ and $90\degree$?
\end{problem}

It is known that for a positive integer $d$, we have $n\leq\binom{d+3}{3}$, as the set of vectors $\{\mathbf{u}_1,\ldots\mathbf{u}_n\}$ forms a spherical Euclidean $3$-distance set (see \cite{Blokhuis1984Few} and \cite{Bannai1983an}). In \cite{glazyrin2018upper}, an upper bound $\binom{d+1}{2}$ for the cardinality of a spherical Euclidean $2$-distance set was proved for most $d$. We guess that the bound for the cardinality of a spherical Euclidean $3$-distance set may be possible to be improved to $\binom{d+2}{3}$, and if so this will give an upper bound of $n$ in Problem \ref{pro motivation}. \par

In order to solve Problem \ref{pro motivation}, it is essential to study $\{0,\pm1\}$-matrices with fixed smallest eigenvalue. \par

Let $S$ be a symmetric matrix with $0$ on the diagonal and $0$ or $\pm1$ off the diagonal, and $\{u_1,\ldots,u_n\}$ be a set of vectors with Gram matrix $Gr=\alpha S+\mathbf{I}$. It is natural to regard the normalized inner products $\frac{1}{\alpha}(\mathbf{u}_i,\mathbf{u}_j)$ as the weight of the edge $\{i,j\}$ in $K_n$, by which we obtain a weighted complete graph $K_n$ with weights $\{0,\pm1\}$. It is reasonable to consider $\{0,\pm1\}$ as the field $\mathbb{F}_3$ with $3$ elements, hence we can consider $S$ as the adjacency matrix of a $\mathbb{F}_3$-weighted complete graph. Another way to look at $S$, see also \cite{Koolen2021recent,Balla2025equiangular} for similar discussions, is to regard $S$ as the adjacency matrix of a signed graph $(G,\sigma)$, where the vertex set is $\{1,\ldots,n\}$, the edge set is $\{\{i,j\}\mid (\mathbf{u}_i,\mathbf{u}_j)\neq0\}$, and the signing is $\sigma(\{i,j\})=+$ (resp. $\sigma(\{i,j\})=-$) if $\frac{1}{\alpha}(\mathbf{u}_i,\mathbf{u}_j)=+1$ (resp. $\frac{1}{\alpha}(\mathbf{u}_i,\mathbf{u}_j)=-1$). In the first viewpoint, we consider $0$ and $\pm1$ all the same, whereas in the second viewpoint, $0$ plays a different role from $\pm 1$.  \par

In this paper, we will mainly use the language of signed graphs, except in Section \ref{sec fwcg}.  \par

This paper is organized as follows. In Section \ref{sec fwcg}, we will define $\mathbb{F}_3$-weighted complete graphs and state some results; in Section \ref{sec sg and qpc}, we will introduce more definitions of signed graphs, and translate the results for $\mathbb{F}_3$-weighted complete graphs to signed graphs; in Section  \ref{sec Hsg}, we will introduce Hoffman signed graphs and associated Hoffman signed graphs; in Section \ref{sec of pf thm}, we will prove Theorem \ref{equaity of fobidden subgraphs and min ev} $\mathrm{(ii)}$ and Theorem \ref{structure theory thm}, and in Section \ref{sec of -1-sqrt2}, we will focus on signed graphs with smallest eigenvalue greater than $-1-\sqrt{2}$ and give a proof of Theorem \ref{thm for min ev >-1-sqrt{2}}. 

\section{\texorpdfstring{$\mathbb{F}_3$-weighted complete graphs}{F3-weighted complete graphs}}\label{sec fwcg}
\subsection{\texorpdfstring{Definitions of $\mathbb{F}_3$-weighted complete graphs}{Definitions of F3-weighted complete graphs}}\label{subsec def of fwg}

In this subsection, we introduce some definitions and notations of $\mathbb{F}_3$-weighted complete graphs. Recall that $\mathbb{F}_3$ is the field with elements $\{0,\pm1\}$.\par

 A \emph{$\mathbb{F}_3$-weighted complete graph} $(K,w)$ is a pair of a complete graph $K$ and a weight $w:\binom{V(K)}{2}\rightarrow \{0,\pm1\}$. 
We say $(K,w)$ is \emph{isomorphic} to $(K',w')$ if there exists an \emph{isomorphism} $\psi: V(K)\rightarrow V(K')$ such that $w(\{x,y\})=w'(\{\psi(x),\psi(y)\})$ for all $\{x,y\}\in \binom{V(K)}{2}$. We mean by the \emph{induced subgraph} of the $\mathbb{F}_3$-weighted complete graph $(K,w)$ on vertex set $U\subseteq V(K)$, the $\mathbb{F}_3$-weighted complete graph $(K_1,w_{K_1})$ such that $ V(K_1)=U$, $E(K_1)=\binom{U}{2}$ and $w_{K_1}=w|_{E(K_1)}$.
We denote by $(K,w)_U$ the induced subgraph of $(K,w)$ on $U\subseteq V(K)$.\par
Let $\varepsilon\in\{0,\pm1\}$. For any pair of vertices $x,y$ of $K$, we say $x$ and $y$ are \emph{$(\varepsilon)$-neighbors}, when $w(\{x,y\})=\varepsilon$. 
Let $x$ be a vertex of the $\mathbb{F}_3$-weighted complete graph $(K,w)$. The \emph{$(\varepsilon)$-neighborhood} $N^{(\varepsilon)}(x)$ of $x$ in $(K,w)$ is the set of $(\varepsilon)$-neighbors of $x$. 
 For a positive integer $t$ and a vertex subset $U\subseteq V(K)$, the \emph{$t$-$(\varepsilon)$-neighborhood} of $U$ in $(K,w)$, denoted as $N^{(\varepsilon)}_{(t)}(U)$, is the vertex set $\{x\in V(K)\mid x$ has at least $t$ $(\varepsilon)$-neighbors in $U\}$. We call the $\mathbb{F}_3$-weighted complete graph $(K,w)$ an \emph{$(\varepsilon)$-clique}, and denoted by $(K,\varepsilon)$, if $w(\{x,y\})=\varepsilon$ for all $\{x,y\}\in \binom{V(K)}{2}$.
 \par

For the $\mathbb{F}_3$-weighted complete graph $(K,w)$, we define the \emph{adjacency matrix} $A=A(K,w)$ to be the symmetric matrix indexed by $V(K)$ such that 
 $$A_{xy}=\left\{\begin{array}{ll}
    w(\{x,y\}), & \text{if } x\neq y, \\
   0,  & \text{if }x=y.
  \end{array}\right.$$
The \emph{eigenvalues} of $(K,w)$ are the eigenvalues of its adjacency matrix. 
\par

We say the $\mathbb{F}_3$-weighted complete graph $(K,w')$ is obtained from $(K,w)$ by \emph{switching} with respect to a vertex subset $U\subseteq V(K)$, if $w'$ satisfies the following: for any edge $\{x,y\}$ of $K$, $w'(\{x,y\})=w(\{x,y\})$ if $x,y$ are both in $U$ or both in $V(K)\backslash U$, and $w'(\{x,y\})=-w(\{x,y\})$ otherwise. If $(K,w')$ can be obtained from $(K,w)$ by switching, we say that $(K,w)$ and $(K,w')$ are \emph{switching equivalent}. Note that each switching equivalent $\mathbb{F}_3$-weighted complete graphs are cospectral.
Given a $\mathbb{F}_3$-weighted complete graph $(H,w_H)$, we say the $\mathbb{F}_3$-weighted complete graph $(K,w)$ is \emph{$(H,w_H)$-switching-free}, if $(K,w)$ contains no induced subgraphs that are switching equivalent to $(H,w_H)$. \par

By considering the non-edges in signed graphs as $(0)$-edges in $\mathbb{F}_3$-weighted complete graphs, a signed graph can be regarded as a $\mathbb{F}_3$-weighted complete graph. We denote by $\widetilde{K}_{2m}^{(0)},\widetilde{K}_{2m}^{(-1)}$ and $(K_{1,t},+1)$ the corresponding $\mathbb{F}_3$-weighted complete graphs of  signed graphs $\widetilde{K}_{2m}^{(0)},\widetilde{K}_{2m}^{(-)}$ and $(K_{1,t},+)$, respectively. Now it is clear why we use the notations for signed graphs $\widetilde{K}_{2m}^{(\varepsilon)}$ for $\varepsilon\in\{0,-\}$.
\par

By considering the weights as colors, the Ramsey theorem in three colors case can be applied to $\mathbb{F}_3$-weighted complete graph.
\begin{theorem}
[{\cite{ramsey1930}}]{\label{Ramseynumber}}
 Let $m, s$ and $ t$ be three positive integers. There exists a minimum positive integer $R(m,s,t)$ such that for any $\mathbb{F}_3$-weighted complete graph with at least $R(m,s,t)$ vertices, it contains a $(+1)$-clique $(K_m,+1)$, a $(-1)$-clique $(K_s,-1)$, or a $(0)$-clique $(K_t,0)$.
\end{theorem}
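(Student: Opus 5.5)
We will prove the existence of $R(m,s,t)$ by the classical neighborhood-partition argument, by induction on $m+s+t$. For the base case, if one of $m,s,t$ equals $1$, then any $\mathbb{F}_3$-weighted complete graph with at least one vertex contains the corresponding clique, because a single vertex is vacuously a $(+1)$-, $(-1)$- and $(0)$-clique. Hence $R(m,s,t)$ exists and equals $1$ in that case.

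For the inductive step, assume $m,s,t\geq 2$ and that $R(m-1,s,t)$, $R(m,s-1,t)$ and $R(m,s,t-1)$ exist. We claim that
$$N:=R(m-1,s,t)+R(m,s-1,t)+R(m,s,t-1)-1$$
vertices suffice. Let $(K,w)$ be an $\mathbb{F}_3$-weighted complete graph with at least $N$ vertices, and fix a vertex $x$. The remaining vertices are partitioned into $N^{(+1)}(x)$, $N^{(-1)}(x)$ and $N^{(0)}(x)$, whose sizes sum to at least $N-1$. By pigeonhole, one of the following holds:
\begin{itemize}
\item $|N^{(+1)}(x)|\geq R(m-1,s,t)$;
\item $|N^{(-1)}(x)|\geq R(m,s-1,t)$;
\item $|N^{(0)}(x)|\geq R(m,s,t-1)$.
\end{itemize}
Otherwise the sum would be at most $N-3<N-1$. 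Suppose, say, that the first case holds. Applying the induction hypothesis to the induced subgraph $(K,w)_{N^{(+1)}(x)}$ gives one of the following:
\begin{itemize}
\item a $(+1)$-clique of size $m-1$, to which we add $x$ to obtain $(K_m,+1)$;
\item a $(-1)$-clique of size $s$;
\item a $(0)$-clique of size $t$.
\end{itemize}
The other two cases are symmetric: for $N^{(-1)}(x)$ we add $x$ to a $(-1)$-clique of size $s-1$, and for $N^{(0)}(x)$ we add $x$ to a $(0)$-clique of size $t-1$. This works because $x$ has the same weight to every vertex of the chosen neighborhood.

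It follows that the set of integers $n$ such that every $\mathbb{F}_3$-weighted complete graph on at least $n$ vertices contains one of the three cliques is nonempty. Let $R(m,s,t)$ be its least element, which exists by well-ordering. This set is upward closed, so "at least $R(m,s,t)$ vertices" is exactly the statement. No step is a real obstacle. The only points needing care are the base case with a size-$1$ clique and checking that adding $x$ preserves the clique property, which holds because all weights from $x$ into the chosen neighborhood are equal.
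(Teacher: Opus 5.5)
Your proof is correct. The paper gives no argument of its own for this statement (it simply quotes Ramsey's theorem), so there is no competing route to compare with. What you supply is the standard neighborhood-partition induction on $m+s+t$.

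Compared with a bare citation, your argument makes the section self-contained and yields the explicit recursion $R(m,s,t)\le R(m-1,s,t)+R(m,s-1,t)+R(m,s,t-1)-1$. Inductively this gives $R(m,s,t)\le \frac{(m+s+t-3)!}{(m-1)!\,(s-1)!\,(t-1)!}$. The paper, however, only ever uses the existence of these numbers (for thresholds such as $\kappa_0$ and $d(\lambda,n)$), never their size.

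There is one arithmetic slip. If all three neighborhoods were too small, their sizes would sum to at most $R(m-1,s,t)+R(m,s-1,t)+R(m,s,t-1)-3=N-2$, not $N-3$. Since $N-2<N-1$, the pigeonhole step is unaffected.

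Also, take the minimum over positive integers $n$, as your appeal to well-ordering implicitly does. Under the paper's convention that graphs are non-empty, if one of $m,s,t$ equals $1$, then every integer $n\le 1$ has the required property. In that case the set of all integers with the property has no least element.
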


\subsection{\texorpdfstring{Results on $\mathbb{F}_3$-weighted complete graphs}{Results on F3-weighted complete graphs}}\label{subsec results on fwcg}
In this subsection, we show some properties on $\{\widetilde{K}_{2m}^{(0)},\widetilde{K}_{2m}^{(-1)}\}$-switching-free $\mathbb{F}_3$-weighted complete graphs for integer $m\geq2$.

\begin{lemma}
    \label{neighborhood of 3m-2-clique}
    Let $m\geq 2,n\geq 3m-2$ be integers and $(K,w)$ be a $\{\widetilde{K}_{2m}^{(0)},\widetilde{K}_{2m}^{(-1)}\}$-switching-free $\mathbb{F}_3$-weighted complete graph. Let $C$ be a $(+1)$-clique of $(K,w)$ with at least $n$ vertices. For any vertex $x\in V(K)$, one of the following holds.
    \begin{enumerate}[label=\rm{(\roman*)}]
        \item $x\in N^{(+1)}_{(m)}(V(C))$, and $x$ has at most $m-1$ $(-1)$-neighbors and at most $m-1$ $(0)$-neighbors in $C$.
        \item $x\in N^{(-1)}_{(m)}(V(C))$, and $x$ has at most $m-1$ $(+1)$-neighbors and at most $m-1$ $(0)$-neighbors in $C$.
        \item $x\in N^{(0)}_{(m)}(V(C))$, and $x$ has at most $m-1$ $(+1)$-neighbors and at most $m-1$ $(-1)$-neighbors in $C$.
    \end{enumerate}
\end{lemma}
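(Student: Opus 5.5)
Since $|V(C)|\ge n\ge 3m-2$ and each vertex of $C$ is a $(+1)$-, $(-1)$- or $(0)$-neighbor of $x$, pigeonhole gives at least $m$ neighbors of some weight $\varepsilon$ in $C$. (If $x\in V(C)$, $x$ itself is excluded, so there are $n-1\ge 3m-3$ other vertices all $(+1)$-neighbors, and (i) holds at once since $3m-3\ge m$ for $m\ge 2$. So assume $x\notin V(C)$.) It is therefore enough to show that $x$ cannot have at least $m$ neighbors of each of two different weights in $C$. Then if $x$ has at least $m$ $\varepsilon$-neighbors, it has at most $m-1$ neighbors of each other weight, which is exactly the relevant alternative (i), (ii) or (iii).

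There are three pairs of weights, and each is ruled out by exhibiting a forbidden induced subgraph up to switching. Take $m$ vertices $a_1,\dots,a_m$ of $C$ that are $(\alpha)$-neighbors of $x$ and $m$ vertices $b_1,\dots,b_m$ that are $(\beta)$-neighbors, with $\alpha\neq\beta$. The vertices $a_i,b_j$ form a $(+1)$-clique on $2m$ vertices, and $x$ plays the role of vertex $0$ in $\widetilde{K}_{2m}^{(\varepsilon)}$.

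\begin{itemize}
\item $\{\alpha,\beta\}=\{+1,0\}$: the induced subgraph on $\{x,a_i,b_j\}$ is exactly $\widetilde{K}_{2m}^{(0)}$.
\item $\{\alpha,\beta\}=\{+1,-1\}$: the induced subgraph is exactly $\widetilde{K}_{2m}^{(-1)}$.
\item $\{\alpha,\beta\}=\{-1,0\}$: switch with respect to $\{x\}$. This fixes all weights inside the clique and changes the $(-1)$-edges at $x$ into $(+1)$-edges while leaving the $(0)$-edges unchanged. The result is $\widetilde{K}_{2m}^{(0)}$.
\end{itemize}

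Each case contradicts the switching-free hypothesis.

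The only point needing care is the bookkeeping in the third case: switching at the single vertex $x$ negates only the edges incident to $x$, so the clique stays a $(+1)$-clique and the $0$-edges stay $0$. No step is a genuine obstacle.
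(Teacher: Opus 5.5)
Your proof is correct and uses the same pigeonhole-plus-forbidden-subgraph argument as the paper. Your explicit check of all three pairs of weights (including the switch at $\{x\}$ for the pair $\{-1,0\}$) and your separate treatment of $x\in V(C)$ simply spell out what the paper compresses into a single ``without loss of generality''.
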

\begin{proof}
 For any vertex $x\in V(K)$, at least one of the three sets $N^{(+1)}(x)\cap V(C)$, $N^{(-1)}(x)\cap V(C)$ and $N^{(0)}(x)\cap V(C)$ contains at least $m$ vertices, as $n\geq 3m-2$. Without loss of generality, we may assume that $|N^{(+1)}(x)\cap V(C)|\geq m$, then $x$ is contained in the set $N_{(m)}^{(+1)}(V(C))$. Since $(K,w)$ is $\{\widetilde{K}_{2m}^{(0)},\widetilde{K}_{2m}^{(-1)}\}$-switching-free, the vertex $x$ has at most $m-1$ $(-1)$-neighbors and at most $m-1$ $(0)$-neighbors in $C$. This completes the proof.
\end{proof}

\begin{lemma}\label{C-D lemma}
    Let $m\geq 2,n\geq2(m^2+m)$ be integers and $(K,w)$ be a $\{\widetilde{K}_{2m}^{(0)},\widetilde{K}_{2m}^{(-1)}\}$-switching-free $\mathbb{F}_3$-weighted complete graph. Let $C$ be a $(+1)$-clique of $(K,w)$ with at least $n$ vertices. Let $\varepsilon\in\{0,\pm1\}$, $\varepsilon'\in\{\pm1\}$. For any vertex $x\in V(K)$, if there exists a $(+1)$-clique $D$ with $m$ vertices such that $V(D)\subseteq N^{(\varepsilon)}(x)\cap N_{(m)}^{(\varepsilon')}(V(C))$, then $x\in N_{(m)}^{(\varepsilon\cdot\varepsilon')}(V(C))$.
\end{lemma}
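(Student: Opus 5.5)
The plan is to reduce everything to Lemma \ref{neighborhood of 3m-2-clique} and then exhibit a forbidden configuration $\widetilde{K}_{2m}^{(0)}$ or $\widetilde{K}_{2m}^{(-1)}$ on $\{x\}\cup V(D)\cup S$. Here $S$ will be a suitable $m$-subset of $V(C)$.

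First, $n\geq 2(m^2+m)\geq 3m-2$, so Lemma \ref{neighborhood of 3m-2-clique} applies to every vertex.
\begin{itemize}
\item Each $y\in V(D)$ lies in $N^{(\varepsilon')}_{(m)}(V(C))$. Since the three alternatives of the lemma are mutually exclusive for $n\ge 3m-2$, $y$ has at most $2(m-1)$ vertices of $C$ that are not $(\varepsilon')$-neighbors.
\item Likewise, $x$ lies in $N^{(\delta)}_{(m)}(V(C))$ for some $\delta\in\{0,\pm1\}$, and $x$ has at most $2(m-1)$ non-$(\delta)$-neighbors in $C$.
\end{itemize}
Our goal is to show $\delta=\varepsilon\varepsilon'$.

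Next, construct $S$. Remove from $V(C)$ the following vertices: the non-$(\varepsilon')$-neighbors of every $y\in D$ (at most $2m(m-1)$ of them), the non-$(\delta)$-neighbors of $x$ (at most $2(m-1)$), the vertices of $D$ (at most $m$), and $x$ itself. These removals are needed because $D$ or $x$ may lie inside $C$. The number of remaining vertices is at least
\[
n-2m(m-1)-2(m-1)-m-1 \;\geq\; 2m^2+2m-2m^2-m+1 \;=\; m+1 .
\]
Choose $S$ to be $m$ of these remaining vertices. Then $S$ is a $(+1)$-clique disjoint from $V(D)\cup\{x\}$. Every edge between $S$ and $D$ has weight $\varepsilon'$, and every edge between $x$ and $S$ has weight $\delta$. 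The edges between $x$ and $D$ all have weight $\varepsilon$.

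Now switch with respect to $S$ if $\varepsilon'=-1$. This turns $V(D)\cup S$ into a $(+1)$-clique on $2m$ vertices. After the switch, $x$ has weight $\varepsilon$ to all of $D$ and weight $\delta\varepsilon'$ to all of $S$. We finish by a case analysis on $\varepsilon$.
\begin{itemize}
\item If $\varepsilon=0$ and $\delta\varepsilon'\neq0$, switch $\{x\}$ if necessary so that $x$ is $(+1)$ to $S$. The resulting induced subgraph is $\widetilde{K}_{2m}^{(0)}$, with $S$ playing the role of $\{1,\dots,m\}$.
\item If $\varepsilon\neq0$, switch $\{x\}$ if necessary so that $x$ is $(+1)$ to $D$. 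Then $x$ is $(\delta\varepsilon'\varepsilon)$ to $S$.
\begin{itemize}
\item If $\delta\varepsilon'\varepsilon=0$, we obtain $\widetilde{K}_{2m}^{(0)}$.
\item If $\delta\varepsilon'\varepsilon=-1$, we obtain $\widetilde{K}_{2m}^{(-1)}$.
\end{itemize}
\end{itemize}
Every excluded case contradicts the switching-free hypothesis. Hence $\delta\varepsilon'=\varepsilon$, i.e.\ $\delta=\varepsilon\varepsilon'$ (using $\varepsilon'^2=1$), and so $x\in N^{(\varepsilon\varepsilon')}_{(m)}(V(C))$.

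The only delicate point is the counting step: we must ensure $S$ avoids $D$ and $x$ while keeping uniform weights. The bound $n\ge 2(m^2+m)$ is comfortably enough for this.
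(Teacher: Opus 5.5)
Your proof is correct. The count leaves at least $m+1$ admissible vertices of $C$. The switches on $S$ (when $\varepsilon'=-1$) and on $\{x\}$ produce exactly $\widetilde{K}_{2m}^{(0)}$ or $\widetilde{K}_{2m}^{(-1)}$ in every case except $\delta=\varepsilon\varepsilon'$, which is the desired conclusion.

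The paper organizes the argument differently: it splits on $\varepsilon'$.

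For $\varepsilon'=+1$ it never writes down a forbidden subgraph. It enlarges $D$ by the at least $n-(2m-2)m\geq 4m$ vertices of $C$ that are $(+1)$-neighbors of all of $D$. This gives a $(+1)$-clique of size at least $4m$ in which $x$ already has $m$ $(\varepsilon)$-neighbors. It then re-applies Lemma \ref{neighborhood of 3m-2-clique} to this clique: $x$ has at least $2m+2$ $(\varepsilon)$-neighbors there, hence more than $m$ in $C$.

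For $\varepsilon'=-1$ it argues by contradiction. If $x$ had fewer than $m$ $(-\varepsilon)$-neighbors in $C$, pigeonhole among the common $(-1)$-neighbors of $D$ in $C$ gives $m$ vertices with a common weight $\eta\neq-\varepsilon$ to $x$. Together with $D$ and $x$ these form a forbidden configuration. This is essentially your construction, except that the paper finds $\eta$ by pigeonhole, whereas you take $\delta$ directly from Lemma \ref{neighborhood of 3m-2-clique} applied to $x$.

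What each approach buys:
\begin{itemize}
\item Your route gives a single uniform argument for both signs, since the switch on $S$ reduces $\varepsilon'=-1$ to $\varepsilon'=+1$.
\item Your explicit removal of $V(D)$ and $x$ from $V(C)$ makes the possible overlaps $V(D)\cap V(C)\neq\emptyset$ and $x\in V(C)$ visibly harmless.
\item The paper's $\varepsilon'=+1$ route uses the earlier lemma purely as a black box, without constructing the forbidden graph explicitly.
\end{itemize}
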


  \begin{proof}
First, we assume that $\varepsilon'=+1$, then $V(D)\subseteq N^{(\varepsilon)}(x)\cap N_{(m)}^{(+1)}(V(C))$. Since $V(D)\subseteq N_{(m)}^{(+1)}(V(C))$, each vertex of $D$ has at most $2m-2$ non-$(+1)$-neighbors in $C$, by Lemma \ref{neighborhood of 3m-2-clique}. Thus, $C$ contains at least $n-(2m-2)m\geq 2(m^2+m)-2m^2+2m=4m$ vertices which are $(+1)$-neighbors of each vertex in $D$. This implies that $|V(D)\cup (V(C)\cap N_{(m)}^{(+1)}(V(D)))|\geq|V(C)\cap N_{(m)}^{(+1)}(V(D))|\geq 4m$. Note that the subgraph induced on $V(D)\cup (V(C)\cap N_{(m)}^{(+1)}(V(D)))$ is a $(+1)$-clique. As $V(D)\subseteq N^{(\varepsilon)}(x)$, the vertex $x$ has at least $4m-(2m-2)\geq 2m+2$ $(\varepsilon)$-neighbors in $V(D)\cup (V(C)\cap N_{(m)}^{(+1)}(V(D)))$, by Lemma \ref{neighborhood of 3m-2-clique}. Hence, the vertex $x$ has at least $2m+2-m>m$ $(\varepsilon)$-neighbors in $C$, in other words, the vertex $x\in N_{(m)}^{(\varepsilon)}(V(C))$.\par

Now we assume that $\varepsilon'=-1$, then $V(D)\subseteq N^{(\varepsilon)}(x)\cap N_{(m)}^{(-1)}(V(C))$. Since $V(D)\subseteq N_{(m)}^{(-1)}(V(C))$, each vertex of $D$ has at most $2m-2$ non-$(-1)$-neighbors in $C$, by Lemma \ref{neighborhood of 3m-2-clique}. Thus, $C$ contains at least $n-(2m-2)m\geq 2(m^2+m)-2m^2+2m=4m$ vertices which are $(-1)$-neighbors of each vertex in $D$. We claim that $x$ has at least $m$ $(-\varepsilon)$-neighbors in $C$. Suppose that $x\not\in N_{(m)}^{(-\varepsilon)}(V(C))$, then $x$ has at most $m-1$ $(-\varepsilon)$-neighbors in $C$, by Lemma \ref{neighborhood of 3m-2-clique}. Thus,  the vertex $x$ has at least $4m-(m-1)\geq 3m+1$ non-$(-\varepsilon)$-neighbors in $V(C)\cap N_{(m)}^{(-1)}(V(D))$. By Lemma \ref{neighborhood of 3m-2-clique}, there exists $\eta\in\{0,\pm1\},\eta\neq-\varepsilon$, such that there are at least $m$ $(\eta)$-neighbors of $x$ in $V(C)\cap N_{(m)}^{(-1)}(V(D))$. Let $U$ be a vertex set of cardinality $m$ such that $U\subseteq V(C)\cap N_{(m)}^{(-1)}(V(D))\cap N^{(\eta)}(x)$. Let $(H,w_H)$ denote the subgraph of $(K,w)$ induced on $\{x\}\cup U\cup V(D)$, then $(H,w_H)$ is switching equivalent to $\widetilde{K}_{2m}^{(0)}$ or $\widetilde{K}_{2m}^{(-1)}$, which is a contradiction. Therefore, we obtain that $x\in N_{(m)}^{(-\varepsilon)}(V(C))$. This completes the proof.   
\end{proof}

Let $m\geq 2,n\geq 2(m^2+m)$ be integers and $(K,w)$ be a $\{\widetilde{K}_{2m}^{(0)},\widetilde{K}_{2m}^{(-1)}\}$-switching-free $\mathbb{F}_3$-weighted complete graph. Let $\mathfrak{C}(n)$ denote the set $\{C\mid C$  is a maximal $(+1)$-clique of $(K,w)$ with at least $n$ vertices$\}$. 
Let $C,C'\in \mathfrak{C}(n)$, the following lemma shows the relation between the neighborhoods of $C$ and $C'$.

\begin{lemma}\label{two cliques}
    Let $m\geq 2,n\geq 2(m^2+m)$ be integers and $(K,w)$ be a $\{\widetilde{K}_{2m}^{(0)},\widetilde{K}_{2m}^{(-1)}\}$-switching-free $\mathbb{F}_3$-weighted complete graph. For any $C,C'\in\mathfrak{C}(n)$, one of the following holds.
\begin{enumerate}[label=\rm{(\roman*)}]
    \item  There exists an  $\varepsilon\in\{\pm1\}$, such that $|V(C')\cap N_{(m)}^{(\varepsilon)}(V(C))|\geq m$, and $N_{(m)}^{(+1)}(V(C'))= N_{(m)}^{(\varepsilon)}(V(C))$, $N_{(m)}^{(-1)}(V(C'))=N_{(m)}^{(-\varepsilon)}(V(C))$.

    \item  $|V(C')\cap N_{(m)}^{(+1)}(V(C))|\leq m-1$, $|V(C')\cap N_{(m)}^{(-1)}(V(C))|\leq m-1$, and at most $2m-2$ vertices of $C'$ are not contained in $N_{(m)}^{(0)}(V(C))$.
\end{enumerate}
\end{lemma}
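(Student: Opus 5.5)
Apply Lemma \ref{neighborhood of 3m-2-clique} to $C$ and every vertex of $C'$. Since $n\geq 2(m^2+m)\geq 3m-2$, each vertex of $C'$ lies in exactly one of $N_{(m)}^{(+1)}(V(C))$, $N_{(m)}^{(-1)}(V(C))$, $N_{(m)}^{(0)}(V(C))$. These three sets are pairwise disjoint: a vertex with at least $m$ $(\varepsilon)$-neighbours in $C$ has at most $m-1$ neighbours of each other type there. So $V(C')$ is partitioned into three parts according to which of these sets contains each vertex.

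If both the $(+1)$-part and the $(-1)$-part have at most $m-1$ vertices, then at most $2m-2$ vertices of $C'$ lie outside $N_{(m)}^{(0)}(V(C))$, and this is case (ii). Otherwise we may fix $\varepsilon\in\{\pm1\}$ with $|V(C')\cap N_{(m)}^{(\varepsilon)}(V(C))|\geq m$, and we must prove the two set equalities of (i). Choose $m$ vertices of $C'$ in $N_{(m)}^{(\varepsilon)}(V(C))$. They form a $(+1)$-clique $D$ with $m$ vertices.

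\textbf{Inclusion $\subseteq$.} Let $x\in N_{(m)}^{(\delta)}(V(C'))$ with $\delta\in\{\pm1\}$. By Lemma \ref{neighborhood of 3m-2-clique} applied to $C'$, the vertex $x$ has at most $2m-2$ non-$(\delta)$-neighbours in $C'$. The $(\varepsilon)$-part $P=V(C')\cap N^{(\varepsilon)}_{(m)}(V(C))$ has at least $m$ vertices, but it may have fewer than $3m-2$, so $P$ need not contain $m$ $(\delta)$-neighbours of $x$. This is the main obstacle.

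To get around it, first enlarge $P$. Each vertex of $D$ has at least $|V(C)|-(2m-2)$ $(\varepsilon)$-neighbours in $C$. Hence $C$ contains at least $n-m(2m-2)\geq 4m$ common $(\varepsilon)$-neighbours of $V(D)$. Every vertex $y\in V(C')$ is a $(+1)$-neighbour of all of $D$, so $y\in N^{(+1)}(y')\cap N_{(m)}^{(\varepsilon)}(V(C))$ for each $y'$ in $D$. Apply Lemma \ref{C-D lemma} with the roles reversed: take the clique $C$ in the lemma to be our $C$, the point $x$ to be $y\in V(C')\setminus V(D)$, and the small clique to be $D$ (all of $D$ is $(+1)$-adjacent to $y$). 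This gives $y\in N_{(m)}^{(\varepsilon)}(V(C))$. Therefore all of $V(C')$ lies in $N_{(m)}^{(\varepsilon)}(V(C))$, so $P=V(C')$ has size at least $n\geq 3m-2$.

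Now $x$ has at least $n-(2m-2)\geq m$ $(\delta)$-neighbours in $V(C')$. These neighbours lie in $N_{(m)}^{(\varepsilon)}(V(C))$ and, as a subset of $C'$, form a $(+1)$-clique. Lemma \ref{C-D lemma} then gives $x\in N_{(m)}^{(\delta\varepsilon)}(V(C))$.

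\textbf{Inclusion $\supseteq$.} Let $x\in N_{(m)}^{(\delta\varepsilon)}(V(C))$. By the trichotomy, $x$ lies in exactly one of the three $m$-neighbourhoods of $C'$. If $x\in N_{(m)}^{(\gamma)}(V(C'))$ with $\gamma\in\{\pm1\}$, then the $\subseteq$ direction gives $x\in N_{(m)}^{(\gamma\varepsilon)}(V(C))$, and disjointness forces $\gamma=\delta$. The remaining possibility is $x\in N_{(m)}^{(0)}(V(C'))$. Then $x$ has at least $n-(2m-2)\geq m$ $(0)$-neighbours in $C'$. These form a $(+1)$-clique inside $N_{(m)}^{(\varepsilon)}(V(C))$, so Lemma \ref{C-D lemma} with the parameter $\varepsilon$ of that lemma set to $0$ gives $x\in N_{(m)}^{(0)}(V(C))$. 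This contradicts disjointness with $N_{(m)}^{(\delta\varepsilon)}(V(C))$.

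Taking $\delta=+1$ and $\delta=-1$ yields both equalities of (i).
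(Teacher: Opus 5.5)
Your proof is correct. Its first half is the paper's argument: the paper also first forces $V(C')\subseteq N^{(\varepsilon)}_{(m)}(V(C))$ by applying Lemma \ref{C-D lemma} to $D$. It then applies the lemma to $m$ $(\delta)$-neighbours of $x$ in $C'$ to get $N^{(\delta)}_{(m)}(V(C'))\subseteq N^{(\delta\varepsilon)}_{(m)}(V(C))$. You do not need Lemma \ref{neighborhood of 3m-2-clique} to find those $m$ neighbours; that is just the definition of $N^{(\delta)}_{(m)}(V(C'))$.

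The reverse inclusion is where you diverge. The paper uses symmetry. Among the at least $4m$ common $(\varepsilon)$-neighbours of $V(D)$ in $C$ it picks $m$, forming a $(+1)$-clique $D'$. Each vertex of $D'$ has $m$ $(\varepsilon)$-neighbours in $V(D)\subseteq V(C')$, so $|V(C)\cap N^{(\varepsilon)}_{(m)}(V(C'))|\geq m$. Rerunning the forward claim with $C$ and $C'$ interchanged then gives $N^{(\delta)}_{(m)}(V(C))\subseteq N^{(\delta\varepsilon)}_{(m)}(V(C'))$.

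You instead use a partition argument. The three $m$-neighbourhoods of $C'$ partition $V(K)$, and so do those of $C$. You show each part for $C'$ lies inside the part for $C$ whose index is multiplied by $\varepsilon$; the $(0)$-part is handled by the $\varepsilon=0$ case of Lemma \ref{C-D lemma}. Hence all these inclusions are equalities. The paper's route uses only the $\pm1$ cases of Lemma \ref{C-D lemma} and reuses the forward claim verbatim. Yours avoids checking the swapped hypothesis but needs the $(0)$-case and the trichotomy for $C'$. Both are equally short and valid.

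Two small points. First, your sentence about the $\geq 4m$ common $(\varepsilon)$-neighbours of $V(D)$ in $C$ is never used in your argument; it is exactly the paper's ingredient for the reverse direction. Second, the phrase ``$y\in N^{(+1)}(y')\cap N^{(\varepsilon)}_{(m)}(V(C))$'' presupposes the conclusion. You mean $V(D)\subseteq N^{(+1)}(y)\cap N^{(\varepsilon)}_{(m)}(V(C))$, which is a direct application of Lemma \ref{C-D lemma}, not one ``with the roles reversed''.
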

\begin{proof}
By Lemma \ref{neighborhood of 3m-2-clique}, for any vertex of $(K,w)$, it is contained in exactly one of the three vertex subsets $N_{(m)}^{(+1)}(V(C))$, $ N_{(m)}^{(-1)}(V(C))$ and $N_{(m)}^{(0)}(V(C))$. If $|V(C')\cap N_{(m)}^{(+1)}(V(C))|\leq m-1$ and $|V(C')\cap N_{(m)}^{(-1)}(V(C))|\leq m-1$, then $\rm{(ii)}$ holds. \par

Now we may assume that $|V(C')\cap N_{(m)}^{(\varepsilon)}(V(C))|\geq m$ for some  $\varepsilon\in\{\pm1\}$. We claim that under this condition, $N_{(m)}^{(\varepsilon')}(V(C'))\subseteq N_{(m)}^{(\varepsilon\cdot\varepsilon')}(V(C))$ holds for any $\varepsilon'\in \{\pm 1\}$. Take $m$ vertices from $V(C')\cap N_{(m)}^{(\varepsilon)}(V(C))$, then these vertices induce a $(+1)$-clique, denoted by $D$. Note that $V(D)\subseteq N^{(+1)}(x)$ for any vertex $x\in V(C')\backslash V(D)$. By Lemma \ref{C-D lemma}, we obtain that $x\in N_{(m)}^{(\varepsilon)}(V(C))$. Thus, $V(C')\subseteq N_{(m)}^{(\varepsilon)}(V(C))$. For any vertex $y\in N_{(m)}^{(\varepsilon')}(V(C'))$, it has at least $m$ $(\varepsilon')$-neighbors in $C'$. This implies that there exists a $(K_m,+1)$, in $N^{(\varepsilon')}(y)\cap V(C')\subseteq N^{(\varepsilon')}(y)\cap N_{(m)}^{(\varepsilon)}(V(C))$.  By Lemma \ref{C-D lemma}, we obtain that $y\in N_{(m)}^{(\varepsilon\cdot\varepsilon')}(V(C))$. Hence, $N_{(m)}^{(\varepsilon')}(V(C'))\subseteq N_{(m)}^{(\varepsilon\cdot\varepsilon')}(V(C))$.\par

To complete the proof, it suffices to show that $N_{(m)}^{(\varepsilon')}(V(C))\subseteq N_{(m)}^{(\varepsilon\cdot\varepsilon')}(V(C'))$ for any $\varepsilon'\in \{\pm 1\}$. This follows once we prove that $|V(C)\cap N_{(m)}^{(\varepsilon)}(V(C'))|\geq m$, by the claim in the previous paragraph.
Since $V(D)\subseteq N_{(m)}^{(\varepsilon)}(V(C))$, each vertex of $D$ has at most $2m-2$ non-$(\varepsilon)$-neighbors in $C$. 
By Lemma \ref{neighborhood of 3m-2-clique}, there exist at least $n-(2m-2)m\geq 2(m^2+m)-2m^2+2m=4m$ vertices in $V(C)\cap N_{(m)}^{(\varepsilon)}(V(D))$. Take $m$ vertices from this set, then these vertices induce a $(+1)$-clique, denoted by $D'$. Note that $V(D')$ is also contained in $ N_{(m)}^{(\varepsilon)}(V(C'))$, as $V(D)\subseteq V(C')$. Thus, $|V(C)\cap N_{(m)}^{(\varepsilon)}(V(C'))|\geq m$. 
This completes the proof.
\end{proof}

\begin{lemma}\label{Ramsey in neighborhood}
    Let $m\geq2, s\geq 3, t\geq 2, n\geq(2m-1)t+1$ be integers. There exists an integer $\kappa_0=\kappa_0(m,s,t)$ such that for any $\{(K_s,-1)$, $(K_{1,t},+1)\}$-switching-free $\mathbb{F}_3$-weighted complete graph $(K,w)$ and any $C\in\mathfrak{C}(n)$, $\varepsilon\in\{\pm1\}$, if $U\subseteq N_{(m)}^{(\varepsilon)}(V(C))$ is a vertex subset with at least $\kappa_0$ vertices, then the subgraph induced on $U$ contains a $(+1)$-clique $(K_m,+1)$.
\end{lemma}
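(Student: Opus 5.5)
The plan is to use the three-colour Ramsey theorem (Theorem \ref{Ramseynumber}) inside $U$ and rule out two of its three outcomes. I would set $\kappa_0=\kappa_0(m,s,t):=R(m,s,t)$. If $|U|\ge\kappa_0$, the subgraph induced on $U$ contains a $(K_m,+1)$, a $(K_s,-1)$, or a $(K_t,0)$. The second outcome is excluded at once, because $(K,w)$ is $(K_s,-1)$-switching-free and $(K_s,-1)$ is trivially switching equivalent to itself. So everything reduces to showing that $U$ contains no $(0)$-clique on $t$ vertices.

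For this I would use the standing assumptions under which $\mathfrak{C}(n)$ was defined: $(K,w)$ is also $\{\widetilde{K}_{2m}^{(0)},\widetilde{K}_{2m}^{(-1)}\}$-switching-free and $C$ is a $(+1)$-clique with at least $n$ vertices. Since $n\ge (2m-1)t+1\ge 3m-2$, Lemma \ref{neighborhood of 3m-2-clique} applies. Hence every vertex $z\in U\subseteq N^{(\varepsilon)}_{(m)}(V(C))$ has at most $m-1$ neighbours in $C$ of each of the two weights other than $\varepsilon$. So $z$ has at most $2m-2$ non-$(\varepsilon)$-neighbours in $C$.

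Now suppose $Z=\{z_1,\dots,z_t\}\subseteq U$ induces a $(0)$-clique. Fix $C_1\subseteq V(C)$ with $|C_1|=(2m-1)t+1$, which is possible since $|V(C)|\ge n$. Together the vertices $z_1,\dots,z_t$ have at most $t(2m-2)<|C_1|$ non-$(\varepsilon)$-neighbours in $C_1$. Hence some $c\in C_1$ is an $(\varepsilon)$-neighbour of every $z_i$. Then $c\notin Z$, and the subgraph induced on $\{c\}\cup Z$ is a star $K_{1,t}$ all of whose edges have weight $\varepsilon$ while the leaves are pairwise $(0)$-adjacent. If $\varepsilon=+1$ this is $(K_{1,t},+1)$ itself. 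If $\varepsilon=-1$, switching with respect to $\{c\}$ turns it into $(K_{1,t},+1)$. Either way this contradicts $(K_{1,t},+1)$-switching-freeness, so the only remaining Ramsey outcome is a $(K_m,+1)$ inside $U$, as required.

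The main point to check is that the $\{\widetilde{K}_{2m}^{(0)},\widetilde{K}_{2m}^{(-1)}\}$-switching-free hypothesis is genuinely available through the definition of $\mathfrak{C}(n)$; the counting step depends on it via Lemma \ref{neighborhood of 3m-2-clique}. Without it, the $(\varepsilon)$-neighbourhoods in $C$ of different $z_i$ could be disjoint, and no common centre $c$ would be forced. Given that hypothesis, the bound $n\ge(2m-1)t+1$ is exactly what makes the pigeonhole over $C_1$ work. Note also that $\kappa_0=R(m,s,t)$ does not depend on $n$, as the statement requires.
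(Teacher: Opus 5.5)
Your proof is correct and is essentially the paper's argument. It sets $\kappa_0=R(m,s,t)$, applies Ramsey inside $U$, discards $(K_s,-1)$ directly, and rules out a $(K_t,0)$ by finding a common $(\varepsilon)$-neighbour in $C$ via Lemma \ref{neighborhood of 3m-2-clique}, which yields a (switched) $(K_{1,t},+1)$. One small point: a $z_i$ lying in $C_1$ is neither an $(\varepsilon)$-neighbour nor a counted non-neighbour of itself, so the pigeonhole really needs $|C_1|>t(2m-2)+t$; your choice $|C_1|=(2m-1)t+1$ already gives this, matching the paper's bound $n-(2m-2)t-t\geq 1$.
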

\begin{proof}
 Let $\kappa_0=\kappa_0(m,s,t):=R(m,s,t)$, as defined in Theorem \ref{Ramseynumber}. Let $(H,w_H)$ denote the subgraph induced on $U$. It follows from Lemma \ref{neighborhood of 3m-2-clique} that each vertex of $(H,w_H)$ has at most $2m-2$ non-$(\varepsilon)$-neighbors in $C$. By Theorem \ref{Ramseynumber}, $(H,w_H)$ contains a $(K_m,+1)$, $(K_s,-1)$, or a $(K_t,0)$. If $(H,w_H)$ contains a $(K_t,0)$, then there exists a vertex of $C$ adjacent to each vertex of this $(K_t,0)$, as $n-(2m-2)t-t\geq 1$. This contradicts that $(K,w)$ is $(K_{1,t},+1)$-switching-free. Since $(K,w)$ is $(K_s,-1)$-switching-free, we obtain that $(H,w_H)$ contains a $(K_m,+1)$. This completes the proof.
\end{proof}

\begin{lemma}\label{before quasi-cli sw to almost posi-clique}
  Let $m\geq2, s\geq 3, t\geq 2, n\geq \max\{2(m^2+m),(2m-1)t+1\}$ be integers. There exists a positive integer $\kappa_1=\kappa_1(m,s,t)$ such that for any $\{\widetilde{K}_{2m}^{(0)},\widetilde{K}_{2m}^{(-1)}$, $(K_s,-1)$, $(K_{1,t},+1)\}$-switching-free $\mathbb{F}_3$-weighted complete graph $(K,w)$, the following hold.
  \begin{enumerate}[label=\rm{(\roman*)}]
      \item For any $x\in V(K), C\in\mathfrak{C}(n)$.
    
      \begin{enumerate}
        [label=\rm{(\alph*)}]
            \item If $x\in N_{(m)}^{(+1)}(V(C))$, then $$|N_{(m)}^{(+1)}(V(C))\cap N^{(0)}(x)|\leq\kappa_1,\ \ |N_{(m)}^{(+1)}(V(C))\cap N^{(-1)}(x)|\leq\kappa_1,$$
            $$|N_{(m)}^{(-1)}(V(C))\cap N^{(0)}(x)|\leq\kappa_1,\ \ |N_{(m)}^{(-1)}(V(C))\cap N^{(+1)}(x)|\leq \kappa_1.$$
            \item If $x\in N_{(m)}^{(-1)}(V(C))$, then $$|N_{(m)}^{(+1)}(V(C))\cap N^{(0)}(x)|\leq\kappa_1,\ \  |N_{(m)}^{(+1)}(V(C))\cap N^{(+1)}(x)|\leq\kappa_1,$$ $$|N_{(m)}^{(-1)}(V(C))\cap N^{(0)}(x)|\leq\kappa_1,\ \ |N_{(m)}^{(-1)}(V(C))\cap N^{(-1)}(x)|\leq \kappa_1.$$
        \end{enumerate}
      \item For any $C_1,C_2\in\mathfrak{C}(n)$, if $N^{(+1)}_{(m)}(V(C_1))\cup N^{(-1)}_{(m)}(V(C_1))\neq N^{(+1)}_{(m)}(V(C_2))\cup N^{(-1)}_{(m)}(V(C_2))$, then $$|N_{(m)}^{(\varepsilon_1)}(V(C_1))\cap N_{(m)}^{(\varepsilon_2)}(V(C_2))|\leq\kappa_1$$ for any $\varepsilon_1,\varepsilon_2\in\{\pm1\}$.
  \end{enumerate}
\end{lemma}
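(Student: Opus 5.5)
Both parts follow one template. Suppose some set we want to bound is larger than $\kappa_1$. Use Lemma \ref{Ramsey in neighborhood} to extract a $(+1)$-clique $D$ of size $m$ inside it. Then apply Lemma \ref{C-D lemma} to $x$ (or to a clique of the other kind) to reach a contradiction with Lemma \ref{neighborhood of 3m-2-clique}, which says each vertex lies in exactly one of $N^{(+1)}_{(m)}(V(C))$, $N^{(-1)}_{(m)}(V(C))$, $N^{(0)}_{(m)}(V(C))$. We take $\kappa_1:=\kappa_0(m,s,t)=R(m,s,t)$, possibly enlarged by a constant depending only on $m$.

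\textbf{Part (i).} Take (a), with $x\in N^{(+1)}_{(m)}(V(C))$. Suppose $|N^{(\varepsilon')}_{(m)}(V(C))\cap N^{(\eta)}(x)|>\kappa_1$ for some $\varepsilon'\in\{\pm1\}$ and $\eta\in\{0,\pm1\}$. This set lies inside $N^{(\varepsilon')}_{(m)}(V(C))$, so Lemma \ref{Ramsey in neighborhood} gives a $(+1)$-clique $D$ on $m$ vertices with $V(D)\subseteq N^{(\eta)}(x)\cap N^{(\varepsilon')}_{(m)}(V(C))$. Lemma \ref{C-D lemma} then yields $x\in N^{(\eta\varepsilon')}_{(m)}(V(C))$. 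Since the three classes are disjoint and $x\in N^{(+1)}_{(m)}(V(C))$, we must have $\eta\varepsilon'=+1$. This excludes exactly the four listed cases:
\begin{itemize}
\item $(\varepsilon',\eta)=(+1,0)$ and $(-1,0)$ give product $0$;
\item $(\varepsilon',\eta)=(+1,-1)$ and $(-1,+1)$ give product $-1$.
\end{itemize}
Case (b) is identical, with the requirement $\eta\varepsilon'=-1$.

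\textbf{Part (ii).} Suppose $S:=N^{(\varepsilon_1)}_{(m)}(V(C_1))\cap N^{(\varepsilon_2)}_{(m)}(V(C_2))$ has more than $\kappa_1$ vertices. By Lemma \ref{Ramsey in neighborhood} applied with $C_1$, $S$ contains a $(+1)$-clique $D$ of size $m$. The goal is to force alternative (i) of Lemma \ref{two cliques} for the pair $C_1,C_2$. That alternative gives $N^{(+1)}_{(m)}(V(C_2))\cup N^{(-1)}_{(m)}(V(C_2))=N^{(\varepsilon)}_{(m)}(V(C_1))\cup N^{(-\varepsilon)}_{(m)}(V(C_1))$, contradicting the hypothesis.

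To get there, we exhibit at least $m$ vertices of $C_2$ in $N^{(\pm1)}_{(m)}(V(C_1))$. Each vertex of $D$ lies in $N^{(\varepsilon_2)}_{(m)}(V(C_2))$, so it has at most $2m-2$ non-$(\varepsilon_2)$-neighbours in $C_2$. Hence at least $n-m(2m-2)\geq 4m$ vertices $y\in V(C_2)$ are $(\varepsilon_2)$-neighbours of every vertex of $D$. For each such $y$ we have $V(D)\subseteq N^{(\varepsilon_2)}(y)\cap N^{(\varepsilon_1)}_{(m)}(V(C_1))$. Lemma \ref{C-D lemma} then gives $y\in N^{(\varepsilon_1\varepsilon_2)}_{(m)}(V(C_1))$ with $\varepsilon_1\varepsilon_2\in\{\pm1\}$. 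So $|V(C_2)\cap N^{(\varepsilon_1\varepsilon_2)}_{(m)}(V(C_1))|\geq 4m\geq m$, alternative (ii) of Lemma \ref{two cliques} fails, and (i) holds.

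\textbf{Main obstacle.} The delicate point is the hypotheses of Lemmas \ref{C-D lemma} and \ref{two cliques}: they require the graph to be $\{\widetilde K^{(0)}_{2m},\widetilde K^{(-1)}_{2m}\}$-switching-free and $n\geq 2(m^2+m)$. Lemma \ref{Ramsey in neighborhood} requires $n\geq(2m-1)t+1$ and $C\in\mathfrak C(n)$. All of these are guaranteed by the stated bound on $n$, so $\kappa_1=R(m,s,t)$ should suffice without any further enlargement.
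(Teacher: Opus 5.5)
Your proposal is correct and follows essentially the same route as the paper. In each part you extract a $(+1)$-clique $D$ of size $m$ via Lemma \ref{Ramsey in neighborhood} and push it through Lemma \ref{C-D lemma}. You then conclude by the disjointness of the three classes (Lemma \ref{neighborhood of 3m-2-clique}) in (i), and by Lemma \ref{two cliques} in (ii).

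The differences are cosmetic. You take $\kappa_1=R(m,s,t)$ instead of the paper's $R(m,s,t)-1$, and both work. In (ii) you collect the common $(\varepsilon_2)$-neighbours of $D$ inside $C_2$ and apply Lemma \ref{two cliques} to the pair $(C_1,C_2)$, whereas the paper does the same with $C_1$ and $C_2$ interchanged.
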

\begin{proof}
Let $\kappa_1=\kappa_1(m,s,t):=\kappa_0(m,s,t)-1$, as defined in Lemma \ref{Ramsey in neighborhood}.\par
(i) Suppose that $x\in N_{(m)}^{(\varepsilon_0)}(V(C))$, where $\varepsilon_0\in\{\pm1\}$. Let $\varepsilon\in\{\pm1\}$ and $\varepsilon'\in\{0,-\varepsilon_0\cdot\varepsilon\mid \varepsilon_0,\varepsilon\in\{\pm1\}\}$. Suppose for contradiction that $x$ has at least $\kappa_1+1$ $(\varepsilon')$-neighbors in $N_{(m)}^{(\varepsilon)}(V(C))$. Let $(H_1,w_{H_1})$ be the subgraph induced on $N_{(m)}^{(\varepsilon)}(V(C))\cap N^{(\varepsilon')}(x)$. By Lemma \ref{Ramsey in neighborhood}, the graph $(H_1,w_{H_1})$ contains a $(K_m,+1)$. By Lemma \ref{C-D lemma}, the vertex $x\in N_{(m)}^{(\varepsilon\cdot\varepsilon')}(V(C))$. Note that $\varepsilon\cdot\varepsilon'\in\{0,-\varepsilon_0\}$, then $\varepsilon\cdot\varepsilon'\neq\varepsilon_0$, which contradicts that $x\in N_{(m)}^{(\varepsilon_0)}(V(C))$. This shows the statement $\rm{(i)}$ holds.\par
(ii) Suppose for contradiction that $N_{(m)}^{(\varepsilon_1)}(V(C_1))\cap N_{(m)}^{(\varepsilon_2)}(V(C_2))$ contains at least $\kappa_1+1$ vertices, for some $\varepsilon_1,\varepsilon_2\in\{\pm1\}$. Let $(H_2,w_{H_2})$ denote the subgraph induced on $N_{(m)}^{(\varepsilon_1)}(V(C_1))\cap N_{(m)}^{(\varepsilon_2)}(V(C_2))$. By Lemma \ref{Ramsey in neighborhood}, the graph $(H_2,w_{H_2})$ contains a $(K_m,+1)$, denote by $D$. By Lemma \ref{neighborhood of 3m-2-clique}, each vertex of $D$ has at most $2m-2$ non-$(\varepsilon_1)$-neighbors in $C_1$. Let $U:=\{x\in V(C_1)\mid V(D)\subseteq N^{(\varepsilon_1)}(x)\}$, then $|U|\geq n-(2m-2)m\geq 4m$. Since $V(D)\subseteq N_{(m)}^{(\varepsilon_2)}(V(C_2))$, we find that, by Lemma \ref{C-D lemma}, the vertex $x\in N_{(m)}^{(\varepsilon_1\cdot\varepsilon_2)}(V(C_2))$ for any $x\in U$. Thus, $|N_{(m)}^{(\varepsilon_1\cdot\varepsilon_2)}(V(C_2))\cap V(C_1)|\geq|U|\geq4m$. Since $\varepsilon_1,\varepsilon_2\in\{\pm1\}$ and $\varepsilon_1\cdot\varepsilon_2\in\{\pm1\}$, by Lemma \ref{two cliques}, we obtain that $N^{(+1)}_{(m)}(V(C_1))=N^{(\varepsilon_1\cdot\varepsilon_2)}_{(m)}(V(C_2))$ and $ N^{(-1)}_{(m)}(V(C_1))=N^{(-\varepsilon_1\cdot\varepsilon_2)}_{(m)}(V(C_2))$. This contradicts that $N^{(+1)}_{(m)}(V(C_1))\cup N^{(-1)}_{(m)}(V(C_1))\neq N^{(+1)}_{(m)}(V(C_2))\cup N^{(-1)}_{(m)}(V(C_2))$. Hence, $|N_{(m)}^{(\varepsilon_1)}(V(C_1))\cap N_{(m)}^{(\varepsilon_2)}(V(C_2))|\leq\kappa_1$  for any $\varepsilon_1,\varepsilon_2\in\{\pm1\}$.
\end{proof}

\section{Signed graphs and quasi-positive-cliques}\label{sec sg and qpc}

In this section, we first introduce more definitions and notations of signed graphs, then translate the results we obtained in Subsection \ref{subsec results on fwcg}  to signed graphs, by considering the $(0)$-edges in $\mathbb{F}_3$-weighted complete graphs as non-edges in signed graphs.

\subsection{More definitions on signed graphs}

Let $(G,\sigma)$ be a signed graph. For $\{x,y\}\in E(G)$, we say $x$ and $y$ are \emph{positively adjacent} or \emph{positive neighbors} if $\sigma(\{x,y\})=+$, and we say $x$ and $y$ are \emph{negatively adjacent} or \emph{negative neighbors} if $\sigma(\{x,y\})=-$. If $\{x,y\}\not\in E(G)$, we say $x$ and $y$ are \emph{non-adjacent} or \emph{non-neighbors}.
For $x\in V(G)$, the \emph{positive-neighborhood} $N^{(+)}(x)$ of $x$ in $(G,\sigma)$ (resp. the \emph{negative-neighborhood} $N^{(-)}(x)$ of $x$ in $(G,\sigma)$) is the set of the positive-neighbors (resp. negative neighbors) of $x$.
 For a positive integer $t$ and a vertex subset $U\subseteq V(G)$, the \emph{$t$-positive-neighborhood} $N^{(+)}_{(t)}(x)$ of $U$ in $(G,\sigma)$ (resp. the \emph{$t$-negative-neighborhood} $N^{(-)}_{(t)}(x)$ of $U$ in $(G,\sigma)$) is the set $\{x\in V(G)\mid x$ has at least $t$ positive neighbors (resp. negative neighbors) in $U\}$. \par
 The induced subgraph of $(G,\sigma)$ on $U\subseteq V(G)$ is denoted by $(G,\sigma)_U$. We call $(G,\sigma)_U$ a \emph{positive clique} (resp. \emph{negative clique}) if $(G,\sigma)_U$ is isomorphic to $(K_{|U|},+)$ (resp. $(K_{|U|},-)$).

\subsection{Quasi-positive-cliques}\label{subsec qpc}

In this subsection, we will define quasi-positive-cliques of $\{\widetilde{K}_{2m}^{(0)},\widetilde{K}_{2m}^{(-)}\}$-switching-free signed graphs, and show a few properties of quasi-positive-cliques. \par
First, we state two results on $\{\widetilde{K}_{2m}^{(0)},\widetilde{K}_{2m}^{(-)}\}$-switching-free signed graphs before we define quasi-positive-cliques. \par

\begin{lemma}
    \label{neighborhood of 3m-2-clique signed case}
    Let $m\geq 2,n\geq 3m-2$ be integers and $(G,\sigma)$ be a $\{\widetilde{K}_{2m}^{(0)},\widetilde{K}_{2m}^{(-)}\}$-switching-free signed graph. Let $C$ be a positive clique of $(G,\sigma)$ with at least $n$ vertices. For any vertex $x\in V(G)$, one of the following holds.
    \begin{enumerate}[label=\rm{(\roman*)}]
        \item $x\in N^{(+)}_{(m)}(V(C))$, and $x$ has at most $m-1$ negative neighbors and at most $m-1$ non-neighbors in $C$.
        \item $x\in N^{(-)}_{(m)}(V(C))$, and $x$ has at most $m-1$ positive neighbors and at most $m-1$ non-neighbors in $C$.
        \item  $x$ has at most $m-1$ positive neighbors and at most $m-1$ negative neighbors in $C$.
    \end{enumerate}
\end{lemma}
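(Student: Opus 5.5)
This lemma is the signed-graph translation of Lemma \ref{neighborhood of 3m-2-clique}, and I would prove it by that translation rather than by redoing any counting. To $(G,\sigma)$ I associate the $\mathbb{F}_3$-weighted complete graph $(K,w)$ on the vertex set $V(G)$. For distinct $x,y$ set $w(\{x,y\})=+1$ if $x,y$ are positively adjacent, $w(\{x,y\})=-1$ if they are negatively adjacent, and $w(\{x,y\})=0$ if they are non-adjacent.

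The first step is to check that the hypotheses transfer.

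\emph{Switching.} Switching $(G,\sigma)$ with respect to $U$ corresponds exactly to switching $(K,w)$ with respect to $U$. A $(0)$-weight is unchanged by negation, and non-edges are unaffected by switching a signed graph, so the two operations agree.

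\emph{Induced subgraphs.} Induced subgraphs correspond as well. Under this dictionary the $\mathbb{F}_3$-weighted graphs $\widetilde{K}_{2m}^{(0)}$ and $\widetilde{K}_{2m}^{(-1)}$ are by definition the images of the signed graphs $\widetilde{K}_{2m}^{(0)}$ and $\widetilde{K}_{2m}^{(-)}$. Hence $(G,\sigma)$ being $\{\widetilde{K}_{2m}^{(0)},\widetilde{K}_{2m}^{(-)}\}$-switching-free implies that $(K,w)$ is $\{\widetilde{K}_{2m}^{(0)},\widetilde{K}_{2m}^{(-1)}\}$-switching-free.

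\emph{The clique.} A positive clique $C$ of $(G,\sigma)$ with at least $n$ vertices becomes a $(+1)$-clique of $(K,w)$ with at least $n$ vertices.

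\emph{Neighborhoods.} The sets $N^{(+)}_{(m)}(V(C))$ and $N^{(-)}_{(m)}(V(C))$ coincide with $N^{(+1)}_{(m)}(V(C))$ and $N^{(-1)}_{(m)}(V(C))$. Non-neighbors in $C$ are exactly the $(0)$-neighbors in $C$.

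The second step is to apply Lemma \ref{neighborhood of 3m-2-clique} to $x$ and read its three alternatives back in signed language. Alternatives (i) and (ii) translate verbatim. Alternative (iii) says $x\in N^{(0)}_{(m)}(V(C))$ and $x$ has at most $m-1$ $(+1)$-neighbors and at most $m-1$ $(-1)$-neighbors in $C$. Dropping the membership condition gives precisely (iii) of the present lemma. The signed statement simply does not name the $0$-neighborhood.

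There is no real obstacle here. The only point needing care is the claim that switching commutes with the dictionary, in particular that zero weights stay zero. If one prefers a direct proof, the argument of Lemma \ref{neighborhood of 3m-2-clique} also works directly. By pigeonhole, since $|V(C)|\ge 3m-2$, one of the three classes of $x$'s relations to $C$ has size at least $m$. If some second class also has size at least $m$, pick $m$ vertices from each class. Together with $x$ they induce a subgraph switching equivalent to $\widetilde{K}_{2m}^{(0)}$ or $\widetilde{K}_{2m}^{(-)}$, where $x$ plays the role of vertex $0$ and one may need to switch at $x$. This contradicts the hypothesis.
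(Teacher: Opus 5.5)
Your proposal is correct and takes the same route as the paper, which proves this lemma in one line by appeal to Lemma~\ref{neighborhood of 3m-2-clique}; your translation (switching fixes zero weights, and alternative (iii) is the weighted alternative (iii) with the membership in $N^{(0)}_{(m)}(V(C))$ dropped) just spells that out. Your optional direct pigeonhole argument is also sound and mirrors the proof of Lemma~\ref{neighborhood of 3m-2-clique} itself.
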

\begin{proof}
    It follows from Lemma \ref{neighborhood of 3m-2-clique} straightforward. 
\end{proof}
Let $m\geq 2,n\geq 2(m^2+m)$ be integers and $(G,\sigma)$ be a $\{\widetilde{K}_{2m}^{(0)},\widetilde{K}_{2m}^{(-)}\}$-switching-free signed graph. Let $\mathcal{C}(n)$ denote the set $\{C\mid C$  is a maximal positive clique of $(G,\sigma)$ with at least $n$ vertices$\}$. The following lemma shows that for any pair of positive cliques in $\mathcal{C}(n)$, either there are few edges between them, or they have the same $m$-neighborhood.
 
\begin{lemma}\label{two cliques signed case}
    Let $m\geq 2,n\geq 2(m^2+m)$ be integers and $(G,\sigma)$ be a $\{\widetilde{K}_{2m}^{(0)},\widetilde{K}_{2m}^{(-)}\}$-switching-free signed graph. For any $C,C'\in\mathcal{C}(n)$, one of the following holds.
    
\begin{enumerate}[label=\rm{(\roman*)}]
    \item There are at most $m-1$ vertices of $C'$ contained in $N_{(m)}^{(+)}(V(C))$, at most $m-1$ vertices of $C'$ contained in $N_{(m)}^{(-)}(V(C))$, and each of the other vertices of $C'$ has at most $2m-2$ neighbors in $C$. 
    \item $N_{(m)}^{(+)}(V(C'))\cup N_{(m)}^{(-)}(V(C'))= N_{(m)}^{(+)}(V(C))\cup N_{(m)}^{(-)}(V(C))$.
\end{enumerate}
\end{lemma}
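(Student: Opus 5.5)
The plan is to translate Lemma \ref{two cliques} from the $\mathbb{F}_3$-weighted setting. Regard $(G,\sigma)$ as an $\mathbb{F}_3$-weighted complete graph $(K,w)$ in which non-edges receive weight $0$, positive edges weight $+1$ and negative edges weight $-1$. Under this dictionary, switching of signed graphs corresponds to switching of weighted complete graphs, and induced subgraphs correspond to induced subgraphs. Hence $(K,w)$ is $\{\widetilde{K}_{2m}^{(0)},\widetilde{K}_{2m}^{(-1)}\}$-switching-free. Moreover, positive cliques of $(G,\sigma)$ are exactly the $(+1)$-cliques of $(K,w)$, and maximality is preserved, so $\mathcal{C}(n)=\mathfrak{C}(n)$. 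Finally, $N^{(\pm)}_{(m)}(V(C))$ in $(G,\sigma)$ coincides with $N^{(\pm1)}_{(m)}(V(C))$ in $(K,w)$.

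We then apply Lemma \ref{two cliques} to $C,C'$.

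\textbf{Case (i) of Lemma \ref{two cliques}.} There is $\varepsilon\in\{\pm1\}$ with $N_{(m)}^{(+1)}(V(C'))=N_{(m)}^{(\varepsilon)}(V(C))$ and $N_{(m)}^{(-1)}(V(C'))=N_{(m)}^{(-\varepsilon)}(V(C))$. Taking the union of these two equalities, and noting that $\{\varepsilon,-\varepsilon\}=\{+1,-1\}$, gives $N_{(m)}^{(+)}(V(C'))\cup N_{(m)}^{(-)}(V(C'))=N_{(m)}^{(+)}(V(C))\cup N_{(m)}^{(-)}(V(C))$. This is alternative (ii) of the present lemma.

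\textbf{Case (ii) of Lemma \ref{two cliques}.} At most $m-1$ vertices of $C'$ lie in $N_{(m)}^{(+)}(V(C))$, and at most $m-1$ lie in $N_{(m)}^{(-)}(V(C))$. Every other vertex $y$ of $C'$ is in neither set. By Lemma \ref{neighborhood of 3m-2-clique signed case} (applicable since $n\ge 2(m^2+m)\ge 3m-2$), such a $y$ must satisfy alternative (iii) there. So $y$ has at most $m-1$ positive and at most $m-1$ negative neighbours in $C$, hence at most $2m-2$ neighbours in $C$. This is alternative (i) of the present lemma.

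The only point needing care is the trichotomy step in Case (ii). A vertex not in $N^{(\pm)}_{(m)}(V(C))$ falls under alternative (iii) of Lemma \ref{neighborhood of 3m-2-clique signed case}; the underlying reason is that Lemma \ref{neighborhood of 3m-2-clique} shows the three $m$-neighbourhoods partition $V(K)$. Everything else is bookkeeping of the dictionary between the two languages.
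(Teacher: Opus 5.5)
Your proposal is correct and follows the paper's proof. Both arguments read the lemma off Lemma \ref{two cliques} through the signed-graph/$\mathbb{F}_3$ dictionary: case (i) there gives alternative (ii) here by taking unions, and case (ii) there gives alternative (i) here. Your explicit appeal to Lemma \ref{neighborhood of 3m-2-clique signed case}, which bounds the number of neighbours in $C$ of the remaining vertices of $C'$ by $2m-2$, fills in a step the paper leaves implicit.
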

\begin{proof}
    By Lemma \ref{two cliques}, if the statement $\rm{(i)}$ does not hold, then either $$N_{(m)}^{(+)}(V(C'))=N_{(m)}^{(+)}(V(C)),\ N_{(m)}^{(-)}(V(C'))=N_{(m)}^{(-)}(V(C))$$ or $$N_{(m)}^{(+)}(V(C'))=N_{(m)}^{(-)}(V(C)),\ N_{(m)}^{(-)}(V(C'))=N_{(m)}^{(+)}(V(C)).$$
    This completes the proof.
\end{proof}
For any $C\in \mathcal{C}(n)$, define the \emph{quasi-positive-clique $Q(C)$} of $(G,\sigma)$, with respect to the pair $(m,n)$, to be the signed graph obtained from $(G,\sigma)_{N_{(m)}^{(+)}(V(C))\cup N_{(m)}^{(-)}(V(C))}$ by switching on $N_{(m)}^{(-)}(V(C))$.

\begin{proposition}
    \label{quasi-cli sw to almost posi-clique}
  Let $m\geq2, s\geq 3, t\geq2,n\geq \max\{2(m^2+m),(2m-1)t+1\}$ be integers. There exists a positive integer $\kappa_2=\kappa_2(m,s,t)$, such that for any $\{\widetilde{K}_{2m}^{(0)},\widetilde{K}_{2m}^{(-)}$, $(K_s,-)$, $(K_{1,t},+)\}$-switching-free signed graph $(G,\sigma)$ and any $C\in\mathcal{C}(n)$, the positive graph of $Q(C)$ is a $\kappa_2$-plex, where $Q(C)$ is the quasi-positive-clique with respect to $C$.
\end{proposition}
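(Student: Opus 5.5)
The plan is to translate the statement into the $\mathbb{F}_3$-weighted language and read the bound off Lemma \ref{before quasi-cli sw to almost posi-clique}(i). Regard $(G,\sigma)$ as the $\mathbb{F}_3$-weighted complete graph $(K,w)$, with non-edges becoming $(0)$-edges. Switching-freeness in the signed sense for $\widetilde{K}_{2m}^{(0)},\widetilde{K}_{2m}^{(-)},(K_s,-),(K_{1,t},+)$ is then the same as switching-freeness in the weighted sense for the corresponding weighted graphs. Maximal positive cliques with at least $n$ vertices are exactly the members of $\mathfrak{C}(n)$, and $N^{(\pm)}_{(m)}(V(C))$ coincides with $N^{(\pm1)}_{(m)}(V(C))$. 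So Lemma \ref{before quasi-cli sw to almost posi-clique} applies with $\kappa_1=\kappa_1(m,s,t)$, and I take $\kappa_2:=2\kappa_1+1$.

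Fix $C\in\mathcal{C}(n)$ and write $P=N^{(+)}_{(m)}(V(C))$ and $M=N^{(-)}_{(m)}(V(C))$. By Lemma \ref{neighborhood of 3m-2-clique signed case}, $P$ and $M$ are disjoint. In $Q(C)$ the vertex set is $P\cup M$, and signs are kept inside $P$ and inside $M$ but flipped between them. I need to show that every vertex $x$ has at most $\kappa_2-1$ non-neighbors in the positive graph of $Q(C)$, i.e.\ at most $2\kappa_1$ other vertices $y$ with $x,y$ not positively adjacent in $Q(C)$.

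\textbf{Case $x\in P$.}
\begin{itemize}
\item A vertex $y\in P$ fails to be a positive neighbor of $x$ in $Q(C)$ exactly when $y\in N^{(0)}(x)\cup N^{(-1)}(x)$. By Lemma \ref{before quasi-cli sw to almost posi-clique}(i)(a), there are at most $2\kappa_1$ such $y$.
\item A vertex $y\in M$ fails exactly when $w(\{x,y\})\in\{0,+1\}$, because after switching a $-1$ becomes $+1$. Again (i)(a) bounds these by $2\kappa_1$.
\end{itemize}
This gives a total of at most $4\kappa_1$ non-positive neighbors. I therefore adjust the choice to $\kappa_2:=4\kappa_1+1$, which suffices for $x\in P$.

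\textbf{Case $x\in M$.} Lemma \ref{before quasi-cli sw to almost posi-clique}(i)(b) gives the symmetric bounds.
\begin{itemize}
\item Inside $M$, the bad $y$ are those with weight $0$ or $-1$.
\item Inside $P$, the bad $y$ are those with weight $0$ or $+1$.
\end{itemize}
Each of these four sets has at most $\kappa_1$ elements, so again there are at most $4\kappa_1$ non-positive neighbors.

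The argument is essentially routine given the lemma. The only step needing care is the sign bookkeeping under switching on $M$, which has to match the four cardinality bounds in (i)(a) and (i)(b) exactly; in particular a $(-1)$-neighbor across $P$ and $M$ becomes positive after switching, so it is not a non-neighbor. The other point to check is that the hypotheses on $n$ in the proposition match those of Lemma \ref{before quasi-cli sw to almost posi-clique}, which they do verbatim.
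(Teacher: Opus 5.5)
Your proof is correct and follows the paper's argument essentially verbatim. The paper also sets $\kappa_2=4\kappa_1+1$ and uses Lemma \ref{before quasi-cli sw to almost posi-clique}(i) to bound the non-neighbors and the negative neighbors of $x$ in $Q(C)$ by $2\kappa_1$ each, which is exactly your four-set count with the sign flip between $N^{(+)}_{(m)}(V(C))$ and $N^{(-)}_{(m)}(V(C))$ (in a final write-up, state $\kappa_2:=4\kappa_1+1$ from the outset rather than first proposing $2\kappa_1+1$).
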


\begin{proof}
Let $\kappa_2=\kappa_2(m,s,t):=4\kappa_1+1$, as defined in Lemma \ref{before quasi-cli sw to almost posi-clique}.\par
Let $x$ be a vertex of $Q(C)$. By Lemma \ref{before quasi-cli sw to almost posi-clique} $\rm{(i)}$, there are at most $2\kappa_1$ non-neighbors of $x$ in $Q(C)$. Now we count the negative neighbors of $x$ in $Q(C)$. 
If $x\in N^{(+)}_{(m)}(V(C))$, the set of negative neighbors of $x$ in $Q(C)$ is  
    $(N^{(+)}_{(m)}(V(C))\cap N^{(-)}(x))\cup (N^{(-)}_{(m)}(V(C))\cap N^{(+)}(x))$.
If $x\in N^{(-)}_{(m)}(V(C))$, the set of negative neighbors of $x$ in $Q(C)$ is  
    $(N^{(+)}_{(m)}(V(C))\cap N^{(+)}(x))\cup (N^{(-)}_{(m)}(V(C))\cap N^{(-)}(x))$.
By Lemma \ref{before quasi-cli sw to almost posi-clique} $\rm{(i)}$, there are at most $2\kappa_1$ negative neighbors of $x$ in $Q(C)$. 
Therefore, the positive graph of $Q(C)$ is a $\kappa_2$-plex. This completes the proof. 
\end{proof}

Lemma \ref{two cliques signed case} and the following lemma show that for any pair of positive cliques in $\mathcal{C}(n)$, their quasi-positive-cliques are either the same or have a bounded intersection.

\begin{lemma}\label{quasi-cli almost isolate}
   Let $m\geq 2, s\geq 3, t\geq 2,n\geq \max\{2(m^2+m),(2m-1)t+1\}$ be integers. There exists a positive integer $\kappa_3=\kappa_3(m,s,t)$, such that for any $\{\widetilde{K}_{2m}^{(0)},\widetilde{K}_{2m}^{(-)},(K_s,-)$, $(K_{1,t},+)\}$-switching-free signed graph $(G,\sigma
   )$ and any pair of  $C_1,C_2\in\mathcal{C}(n)$, if $V(Q(C_1))\neq V(Q(C_2))$, then $|N_{(m)}^{(\varepsilon_1)}(V(C_1))\cap N_{(m)}^{(\varepsilon_2)}(V(C_2))|\leq\kappa_3$ for any $\varepsilon_1,\varepsilon_2\in\{+,-\}$. \end{lemma}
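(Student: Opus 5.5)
This lemma is the signed-graph translation of Lemma \ref{before quasi-cli sw to almost posi-clique} (ii), so I take $\kappa_3:=\kappa_1(m,s,t)$ and pass to $\mathbb{F}_3$-weighted complete graphs. The passage goes as follows. Given $(G,\sigma)$, form the $\mathbb{F}_3$-weighted complete graph $(K,w)$ on $V(G)$ by setting $w(\{x,y\})=+1,-1,0$ according to whether $x,y$ are positively adjacent, negatively adjacent, or non-adjacent. Switching commutes with this correspondence. Moreover, $(G,\sigma)$ contains an induced subgraph switching equivalent to one of $\widetilde{K}_{2m}^{(0)},\widetilde{K}_{2m}^{(-)},(K_s,-),(K_{1,t},+)$ exactly when $(K,w)$ contains an induced subgraph switching equivalent to the corresponding weighted graph $\widetilde{K}_{2m}^{(0)},\widetilde{K}_{2m}^{(-1)},(K_s,-1),(K_{1,t},+1)$. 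Hence $(K,w)$ satisfies the forbidden-subgraph hypotheses of Lemma \ref{before quasi-cli sw to almost posi-clique}.

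Next I check that the clique families and neighbourhoods match. A positive clique of $(G,\sigma)$ is exactly a $(+1)$-clique of $(K,w)$, and maximality is preserved, so $\mathcal{C}(n)=\mathfrak{C}(n)$. For every $U$, the set $N_{(m)}^{(\pm)}(U)$ in $(G,\sigma)$ coincides with $N_{(m)}^{(\pm1)}(U)$ in $(K,w)$. By definition, $V(Q(C))=N_{(m)}^{(+)}(V(C))\cup N_{(m)}^{(-)}(V(C))$. So the hypothesis $V(Q(C_1))\neq V(Q(C_2))$ says precisely that
\[
N^{(+1)}_{(m)}(V(C_1))\cup N^{(-1)}_{(m)}(V(C_1))\neq N^{(+1)}_{(m)}(V(C_2))\cup N^{(-1)}_{(m)}(V(C_2)),
\]
which is the hypothesis of Lemma \ref{before quasi-cli sw to almost posi-clique} (ii). That lemma then gives $|N_{(m)}^{(\varepsilon_1)}(V(C_1))\cap N_{(m)}^{(\varepsilon_2)}(V(C_2))|\leq\kappa_1$ for all $\varepsilon_1,\varepsilon_2\in\{\pm1\}$. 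The bounds on $m,s,t,n$ are identical in the two statements.

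The only point needing care is the equivalence of the switching-free conditions: switching a signed graph corresponds to switching the weighted graph, and non-edges correspond to $(0)$-weights under both operations. This is immediate from the definitions, so the proof is a direct translation with no real obstacle.
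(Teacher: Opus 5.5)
Your proposal is correct and matches the paper's proof: the paper also sets $\kappa_3:=\kappa_1$, notes that $V(Q(C_1))\neq V(Q(C_2))$ means the unions $N_{(m)}^{(+)}(V(C_i))\cup N_{(m)}^{(-)}(V(C_i))$ differ, and then applies Lemma \ref{before quasi-cli sw to almost posi-clique} (ii). Your explicit check of the signed-graph/$\mathbb{F}_3$-weighted translation ($\mathcal{C}(n)=\mathfrak{C}(n)$, matching $m$-neighbourhoods, and preservation of the switching-free hypotheses) spells out what the paper leaves implicit.
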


\begin{proof}
Let $\kappa_3=\kappa_3(m,s,t):=\kappa_1$, as defined in Lemma \ref{before quasi-cli sw to almost posi-clique}. Since $V(Q(C_1))\neq V(Q(C_2))$, we have $N^{(+1)}_{(m)}(V(C_1))\cup N^{(-1)}_{(m)}(V(C_1))\neq N^{(+1)}_{(m)}(V(C_2))\cup N^{(-1)}_{(m)}(V(C_2))$. By Lemma \ref{before quasi-cli sw to almost posi-clique} $\rm{(ii)}$, we obtain that $|N_{(m)}^{(\varepsilon_1)}(V(C_1))\cap N_{(m)}^{(\varepsilon_2)}(V(C_2))|\leq\kappa_3$, for any $\varepsilon_1,\varepsilon_2\in\{+,-\}$.
\end{proof}

\section{Hoffman signed graphs}\label{sec Hsg}

In this section, we first introduce the definitions and key properties of Hoffman signed graphs, then construct the associated Hoffman signed graphs by means of the induced subgraphs which are switching equivalent to quasi-positive-cliques.

\subsection{Definitions of Hoffman signed graphs}

A \emph{Hoffman signed graph} $\mathfrak{h}=(H,\sigma,\ell)$  is a pair of a signed graph $(H,\sigma)$ and a labeling map $\ell:V(H)\rightarrow\{f,s\}$, satisfying that any pair of vertices with label $f$ are non-adjacent and any vertex with label $f$ has at least one neighbor. We call $(H,\sigma)$ the \emph{underlying signed graph} of $\mathfrak{h}$. We say a vertex with label $s$ a \emph{slim vertex}, and a vertex with label $f$ a \emph{fat vertex}. Denote by $V_s(\mathfrak{h})$ (resp. $V_f(\mathfrak{h})$) the \emph{slim vertex set} (resp. \emph{fat vertex set}) of $\mathfrak{h}$.
If each slim vertex of the Hoffman signed graph $\mathfrak{h}$ has at least one fat neighbor, we call $\mathfrak{h}$ \emph{fat}. 
 The \emph{slim graph} $(H_s,\sigma_s)$ of the Hoffman signed graph $\mathfrak{h}$ is the subgraph of $(H,\sigma)$ induced on $V_s(\mathfrak{h})$. We may consider a signed graph as a Hoffman signed graph with only slim vertices, and vice versa. For the rest part of the paper, we will not distinguish between them.\par
 
  A Hoffman signed graph $\mathfrak{h}_1=(H_1,\sigma_1,\ell_1)$ is called a \emph{subgraph} of $\mathfrak{h}=(H,\sigma,\ell)$, if $(H_1,\sigma_1)$ is a subgraph of $(H,\sigma)$ and $\ell_1=\ell|_{V(H_1)}$. And we say $\mathfrak{h}_1=(H_1,\sigma_1,\ell_1)$ is an \emph{induced subgraph} of $\mathfrak{h}=(H,\sigma,\ell)$, if $(H_1,\sigma_1)$ is an induced subgraph of $(H,\sigma)$ and $\ell_1=\ell|_{V(H_1)}$.
  Two Hoffman signed graphs $\mathfrak{h}=(H,\sigma,\ell)$ and $\mathfrak{h'}=(H',\sigma',\ell')$ are called \emph{isomorphic} if there exists an isomorphism between $(H,\sigma)$ and $(H',\sigma')$ that preserves the labeling.\par
  
  For a Hoffman signed graph $\mathfrak{h}=(H,\sigma,\ell)$, let
  $$A=\begin{pmatrix}
    A_s&C\\
    C^T&O
  \end{pmatrix}$$
  be the adjacency matrix of $(H,\sigma)$, where $A_s$ is the adjacency matrix of the slim graph of $(H,\sigma)$.
  The matrix $S(\mathfrak{h}):=A_s-CC^T$ is called the \emph{special matrix} of $\mathfrak{h}$. The \emph{eigenvalues} of a Hoffman signed graph are the eigenvalues of its special matrix. Denote by $\lambda_{\min}(\mathfrak{h})$ the smallest eigenvalue of the Hoffman signed graph $\mathfrak{h}$. Note that if a Hoffman signed graph $\mathfrak{h}=(H,\sigma,\ell)$ has only slim vertices, then $S(\mathfrak{h})=A(H)$, and thus the special eigenvalues and the adjacency eigenvalues are also the same. \par
  
We say that Hoffman signed graph $\mathfrak{h'}=(H',\sigma',\ell')$ can be obtained from $\mathfrak{h}=(H,\sigma,\ell)$ by \emph{switching} with respect to $U$ for some $U\subseteq V(H)$, if $(H',\sigma')$ is obtained from $(H,\sigma)$ by \emph{switching} with respect to $U$ and for any $x\in V(H')$, $\ell'(x)=\ell(x)$. We also call $\mathfrak{h}$ and $\mathfrak{h'}$ are \emph{switching equivalent}, if $\mathfrak{h}'$ can be obtained from $\mathfrak{h}$ by switching. 
\begin{lemma}\label{hsg sw}
Let $\mathfrak{h}=(H,\sigma,\ell)$ be a Hoffman signed graph. If  $\mathfrak{h}'=(H',\sigma',\ell')$ is switching equivalent to $\mathfrak{h}=(H,\sigma,\ell)$, then $\mathfrak{h'}$ has the same eigenvalues as $\mathfrak{h}$.  
\end{lemma}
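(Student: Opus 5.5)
The plan is to show that switching acts on the special matrix by a signature similarity. Then $S(\mathfrak{h}')$ and $S(\mathfrak{h})$ are similar, so they have the same eigenvalues.

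Order the vertices of $H$ with the slim vertices first and the fat vertices after them, as in the definition of the special matrix. Let $U\subseteq V(H)$ be the set with respect to which we switch. Define the diagonal matrix $D$ indexed by $V(H)$ with $D_{xx}=-1$ if $x\in U$ and $D_{xx}=1$ otherwise. By the definition of switching, the sign of the edge $\{x,y\}$ is changed exactly when exactly one of $x,y$ lies in $U$. Hence $A(H',\sigma')=DA(H,\sigma)D$.

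Since the labeling is preserved, $D$ is block diagonal, $D=\mathrm{diag}(D_s,D_f)$, with respect to the slim/fat partition. Writing $A(H,\sigma)$ in the block form with blocks $A_s$ and $C$, we get
\[
A(H',\sigma')=\begin{pmatrix} D_sA_sD_s & D_sCD_f\\ D_fC^TD_s & O\end{pmatrix}.
\]
So the slim block of $\mathfrak{h}'$ is $A_s'=D_sA_sD_s$ and its slim--fat block is $C'=D_sCD_f$.

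We now compute the special matrix. Because $D_f^2=\mathbf{I}$, we have $C'C'^T=D_sCD_fD_fC^TD_s=D_sCC^TD_s$. Therefore
\[
S(\mathfrak{h}')=D_sA_sD_s-D_sCC^TD_s=D_sS(\mathfrak{h})D_s.
\]
Since $D_s=D_s^{-1}$, the matrix $S(\mathfrak{h}')$ is similar to $S(\mathfrak{h})$, and so the two have the same eigenvalues.

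There is no real obstacle in this argument. The only point that needs care is that the fat signs cancel: the factor $D_f$ appears twice in $C'C'^T$ and squares to the identity. This is also exactly why switching a fat vertex alone leaves $S$ unchanged.
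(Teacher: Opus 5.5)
Your proposal is correct and follows essentially the same route as the paper: conjugate $A(H,\sigma)$ by the diagonal $\pm1$ matrix $D$, use that $D$ is block diagonal with respect to the slim/fat partition, and conclude $S(\mathfrak{h}')=D_sS(\mathfrak{h})D_s$. You are in fact slightly more explicit than the paper, since you write $C'=D_sCD_f$ and note that $D_f^2=\mathbf{I}$ makes the fat signs cancel in $C'C'^T$, which the paper only asserts.
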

\begin{proof}
Since $\mathfrak{h}=(H,\sigma,\ell)$ and $\mathfrak{h'}=(H',\sigma',\ell')$ are switching equivalent, we may assume that $(H',\sigma')$ is obtained from $(H,\sigma)$ by switching on $U$ for some $U\subseteq V(H)$. Define the diagonal matrix $D$ as
$$D_{xx}=\left\{\begin{array}{ll}
    -1, & \text{if } x\in U \\
    1,  & \text{otherwise}.
  \end{array}\right.$$
Note that $$\begin{pmatrix}
    A_s(H',\sigma')&C'\\
    C'^T&O
  \end{pmatrix}=A(H',\sigma')=D^{-1}A(H,\sigma)D=D^{-1}\begin{pmatrix}
    A_s(H,\sigma)&C\\
    C^T&O
  \end{pmatrix}D,$$
then $A_s(H',\sigma')=D_s^{-1}A_s(H,\sigma)D_s$ and $C'C'^T=D_s^{-1}CC^TD_s$, where $D_s$ is the submatrix of $D$ with rows and columns indexed by vertices in $V_s(\mathfrak{h})$. Thus, $S(\mathfrak{h'})=A_s(H',\sigma')-C'C'^T=D_s^{-1}A_s(H,\sigma)D_s-D_s^{-1}CC^TD_s=D_s^{-1}S(\mathfrak{h})D_s$. Hence, the eigenvalues of $\mathfrak{h}$ and $\mathfrak{h'}$ are the same.
\end{proof}

Let $\mathfrak{h}$ be a Hoffman signed graph and $n$ be a positive integer. We denote by $G(\mathfrak{h},n)$ the signed graph obtained by replacing each fat vertex of $\mathfrak{h}$ by a positive clique $(K_n,+)$. Hoffman in principle showed the following result in \cite{hoffman1977onsigned}, and Gavrilyuk et al. \cite{Gavrilyuk2021signed} reproved it and stated it in terms of Hoffman signed graphs.
\begin{theorem}[{\cite[Theorem $4.2$]{Gavrilyuk2021signed}}]\label{Hoffman-Ostrowski Theorem of signed case}
 For any Hoffman signed graph $\mathfrak{h}$ and any positive integer $n$, $\lambda_{\min}(G(\mathfrak{h},n))\geq\lambda_{\min}(\mathfrak{h})$, and $\lim\limits_{n\rightarrow+\infty}\lambda_{\min}(G(\mathfrak{h},n))=\lambda_{\min}(\mathfrak{h})$.
\end{theorem}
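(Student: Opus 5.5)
We prove both assertions by reducing to a special-matrix computation, in the same spirit as the unsigned Hoffman–Ostrowski argument. Write the vertex set of $G(\mathfrak{h},n)$ as $V_s(\mathfrak{h})\cup\bigcup_{f\in V_f(\mathfrak{h})}K^{(f)}$, where $K^{(f)}$ is the positive clique of size $n$ replacing $f$. Every slim vertex $x$ adjacent to $f$ is joined to all of $K^{(f)}$ with sign $\sigma(\{x,f\})$. Write $c_f\in\{0,\pm1\}^{V_s}$ for the column of $C$ indexed by $f$. Then
$$A(G(\mathfrak{h},n))=\begin{pmatrix}A_s & (c_f\mathbf{1}_n^T)_f\\ (\mathbf{1}_n c_f^T)_f & \bigoplus_f (J_n-I_n)\end{pmatrix}.$$

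\textbf{Lower bound.} Put $\mu=\lambda_{\min}(\mathfrak{h})$. Since the special matrix $S(\mathfrak{h})$ is a principal-type Schur-complement object, $\mu$ satisfies $\mu\le \lambda_{\min}(A_s)$. Moreover, for $f$ with $c_f\neq0$, the vector $e_f$ gives $e_f^TCC^Te_f$-type terms that force $\mu\le -1$; since any fat vertex has a neighbor, some diagonal entry of $CC^T$ is at least $1$ while $A_s$ has zero diagonal.

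Take a vector $(u,(v_f)_f)$ with $u\in\mathbb{R}^{V_s}$ and $v_f\in\mathbb{R}^n$, and compute the quadratic form $q=u^TA_su+2\sum_f (c_f^Tu)(\mathbf{1}^Tv_f)+\sum_f\big((\mathbf{1}^Tv_f)^2-\|v_f\|^2\big)$. Let $s_f=\mathbf{1}^Tv_f$, so that $\|v_f\|^2\ge s_f^2/n$. We want to show $q-\mu\|u\|^2-\mu\sum_f\|v_f\|^2\ge0$. Since $-1-\mu\ge0$, the coefficient of $\|v_f\|^2$ is nonnegative, and we can replace $\|v_f\|^2$ by its minimum $s_f^2/n$. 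It then remains to check that
$$u^T(A_s-\mu I)u+\sum_f\Big[2(c_f^Tu)s_f+\big(1+\tfrac{-1-\mu}{n}\big)s_f^2\Big]\ge0 .$$
Minimizing over $s_f$, with $a_n=1+\frac{-1-\mu}{n}\ge1$, gives $u^T(A_s-\mu I)u-\sum_f a_n^{-1}(c_f^Tu)^2\ge u^T(A_s-CC^T-\mu I)u\ge0$, because $a_n^{-1}\le1$. This proves $\lambda_{\min}(G(\mathfrak{h},n))\ge\mu$.

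\textbf{Limit.} Take a unit eigenvector $u$ of $S(\mathfrak{h})$ for $\mu$, and test the vector with $v_f=-\frac{c_f^Tu}{n}\,a_n^{-1}\mathbf{1}$, so that $s_f=-a_n^{-1}c_f^Tu$. The Rayleigh quotient then equals
$$\frac{u^TA_su-\sum_f a_n^{-1}(c_f^Tu)^2+O(1/n)}{1+O(1/n)},$$
where the extra $-\|v_f\|^2$ and norm terms are $O(1/n)$. As $a_n\to1$, this quotient tends to $u^T S(\mathfrak{h})u=\mu$. Combined with the lower bound, this gives the limit.

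\textbf{Main obstacle.} The delicate point is the sign condition $\mu\le-1$ used to discard $\|v_f\|^2$ via Cauchy–Schwarz. It should follow from the requirement that every fat vertex has a neighbor: if $x$ is adjacent to $f$, then $S_{xx}=-\sum_f C_{xf}^2\le-1$, so $\mu\le S_{xx}\le-1$. If there are no fat vertices the statement is trivial.
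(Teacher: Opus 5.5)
Your proof is correct and complete. The paper gives no proof of this statement; it quotes Theorem 4.2 of Gavrilyuk et al. So there is no internal argument to match, and yours is a valid self-contained proof.

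Your reduction has two steps: Cauchy--Schwarz lets you replace $v_f$ by a constant vector, and then you minimize in $s_f$. Together these are the quadratic-form version of taking the Schur complement of the clique blocks, and your coefficient $a_n^{-1}=\frac{n}{n-1-\mu}\le 1$ is exactly the resulting factor in front of $CC^{T}$. That is why $A(G(\mathfrak{h},n))-\mu\mathbf{I}\succeq 0$ reduces to $S(\mathfrak{h})-\mu\mathbf{I}\succeq 0$.

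Two small clean-ups:
\begin{enumerate}[label=\rm{(\arabic*)}]
\item The opening remarks about a ``Schur-complement object'' and the vector $e_f$ are unnecessary and somewhat garbled. The correct reason for $\mu\le -1$ is the one you give at the end: $S_{xx}=-\sum_f C_{xf}^2\le -1$ for any slim $x$ with a fat neighbor.
\item In the limit computation, the coefficient of $(c_f^Tu)^2$ is really $-(2a_n^{-1}-a_n^{-2})$, not $-a_n^{-1}$. The two differ by $O(1/n)$, so your conclusion stands. Taking $a_n=1$ in the test vector would work equally well.
\end{enumerate}

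The paper's own tools give a different, computation-free route to the inequality. By Theorem~\ref{reduced representation theorem}, whose Cholesky-based proof works for any real $m$, $\mathfrak{h}$ has a representation $\phi$ of norm $m=-\mu\ge 1$. Send the $i$-th vertex of the clique replacing $f$ to $\phi(f)+\sqrt{m-1}\,\mathbf{e}_{f,i}$, where the $\mathbf{e}_{f,i}$ are fresh orthonormal vectors. This realizes $A(G(\mathfrak{h},n))+m\mathbf{I}$ as a Gram matrix, so $\lambda_{\min}(G(\mathfrak{h},n))\ge -m=\mu$.

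That argument explains the inequality geometrically but says nothing about the limit, which still needs a test-vector argument like yours. Your quadratic-form approach handles both halves at once and makes the rate of convergence explicit.
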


\subsection{Lattices}\label{subsec lattices}
Let $\mathbb{R}^n$ be an $n$-dimensional Euclidean space, equipped with the canonical \emph{inner product} $(\mathbf{x},\mathbf{y}):=\mathbf{x}^T\mathbf{y}$. The number $||\mathbf{x}||^2=(\mathbf{x},\mathbf{x})$ will be called the \emph{norm} of $\mathbf{x}\in\mathbb{R}^n$.\par

A \emph{lattice} $\Lambda$ in $\mathbb{R}^n$ is a discrete set of vectors in $\mathbb{R}^n$ which is closed under addition and subtraction. An \emph{integral lattice} is a lattice in which the inner product of any two vectors is integral. The \emph{direct sum} of two lattices $\Lambda_1$ and $\Lambda_2$ is defined if $\Lambda_1$ and $\Lambda_2$ are orthogonal, i.e., if $(\mathbf{x},\mathbf{y})=0$ for $\mathbf{x}\in\Lambda_1$, $\mathbf{y}\in\Lambda_2$ , as $\Lambda_1\oplus\Lambda_2=\{\mathbf{x}+\mathbf{y}\mid \mathbf{x}\in\Lambda_1,\mathbf{y}\in\Lambda_2\}$. A lattice $\Lambda$ is called \emph{irreducible} if $\Lambda=\Lambda_1\oplus\Lambda_2$ implies $\Lambda_1=\{\mathbf{0}\}$ or $\Lambda_2=\{\mathbf{0}\}$, and \emph{reducible} otherwise.\par
If $X$ is a set of vectors in  $\mathbb{R}^n$ such that $(\mathbf{x},\mathbf{y})\in \mathbb{Z}$ for all $\mathbf{x,y}\in X$, then $\Lambda=\{ \sum_{\mathbf{x}\in X}\alpha_{\mathbf{x}}\mathbf{x}\mid \alpha_{\mathbf{x}}\in\mathbb{Z}\}$ is an integral lattice. In this case, we say that the lattice $\Lambda$ is \emph{generated} by $X$, and denote it by $\langle X\rangle$.  We say an integral lattice $\Lambda$ has \emph{minimal norm} $t$ if $\min\{(\mathbf{x},\mathbf{x})\mid \mathbf{x}\neq\mathbf{0},\mathbf{x}\in \Lambda\}=t$.
\par

For a positive integer $s$, an integral lattice $\Lambda\subset \mathbb{R}^n$ is called \emph{$s$-integrable} if $\sqrt{s}\Lambda$ can be embedded in the standard lattice $\mathbb{Z}^k$ for some $k\geq n$. In other words, an integral lattice $\Lambda$ is $s$-integrable if and only if each vector in $\Lambda$ can be described by the form $\frac{1}{\sqrt{s}}(x_1,\ldots,x_k)^T$ with all $x_i\in \mathbb{Z}$ in $\mathbb{R}^k$ for some $k\geq n$ simultaneously. \par

\subsection{Hoffman signed graphs and Lattices}

We start with the representation and the reduced representation of Hoffman signed graphs.\par

\begin{definition}
   For a Hoffman signed graph $\mathfrak{h}$ and a positive integer $n$, a mapping 
 $\phi:V(\mathfrak{h})\rightarrow\mathbb{R}^n$ such that 
 $$(\phi(x),\phi(y))=\left\{\begin{array}{ll}
    m, & \text{if } x=y\in V_s(\mathfrak{h}), \\
    1, & \text{if } x=y\in V_f(\mathfrak{h}), \\
    1, & \text{if } x,y \text{ are positively adjacent},\\
    -1, & \text{if } x,y \text{ are negatively adjacent},\\
    0, & \text{otherwise},
  \end{array}\right.$$ 
is called a \emph{representation of $\mathfrak{h}$ of norm $m$}. 
\end{definition}
We denote by $\Lambda(\mathfrak{h},m)$ the lattice generated by $\{\phi(x)\mid x\in V(\mathfrak{h})\}$. Note that the isomorphism class of $\Lambda(\mathfrak{h},m)$ depends only on $m$, and is independent of $\phi$, justifying the notation.\par
 For a Hoffman signed graph $\mathfrak{h}=(H,\sigma,\ell)$, we define $$n_{\mathfrak{h}}^f(x,y):=\sum_{\substack{z\in V_f(\mathfrak{h})\\\{x,z\},\{y,z\}\in E(\mathfrak{h}) }}|\{z\mid \sigma(\{x,z\})=\sigma(\{y,z\})\}|-\sum_{\substack{z\in V_f(\mathfrak{h})\\ \{x,z\},\{y,z\}\in E(\mathfrak{h})}}|\{z\mid \sigma(\{x,z\})\neq\sigma(\{y,z\})\}|$$ for $x,y\in V_s(\mathfrak{h})$.
\begin{definition}
 For a Hoffman signed graph $\mathfrak{h}$ and a positive integer $n$, a mapping $\psi:V_s(\mathfrak{h})\rightarrow\mathbb{R}^n$ such that 
 $$(\psi(x),\psi(y))=\left\{\begin{array}{ll}
    m-n_{\mathfrak{h}}^f(x,y), & \text{if } x=y, \\
    1-n_{\mathfrak{h}}^f(x,y), & \text{if } x,y \text{ are positively adjacent},\\
    -1-n_{\mathfrak{h}}^f(x,y), & \text{if } x,y \text{ are negatively adjacent},\\
    -n_{\mathfrak{h}}^f(x,y), & \text{otherwise},
  \end{array}\right.$$ is called a \emph{reduced representation of $\mathfrak{h}$ of norm $m$}. 
\end{definition}
  We denote by $\Lambda^{\rm{red}}(\mathfrak{h},m)$ the lattice generated by $\{\psi(x)\mid x\in V_s(\mathfrak{h})\}$. Note that the isomorphism class of $\Lambda^{\rm{red}}(\mathfrak{h},m)$ depends only on $m$, and is independent of $\psi$, justifying the notation.
\begin{lemma}
\label{reduced representation lemma}
  If $\mathfrak{h}$ is a Hoffman signed graph having a representation of norm $m$, then $\mathfrak{h}$ has a reduced representation of norm $m$, and $\Lambda(\mathfrak{h},m)$ is isomorphic to $\Lambda^{\rm{red}}(\mathfrak{h},m)\oplus\mathbb{Z}^{|V_f(\mathfrak{h})|}$ as a lattice. 
\end{lemma}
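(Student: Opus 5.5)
The plan is to build the reduced representation directly from a given representation $\phi$ of norm $m$ by projecting away the fat vectors. First, the fat vectors are orthonormal. Indeed, by definition $(\phi(z),\phi(z))=1$ for $z\in V_f(\mathfrak{h})$, and distinct fat vertices are non-adjacent, so $(\phi(z),\phi(z'))=0$. Hence $\{\phi(z)\mid z\in V_f(\mathfrak{h})\}$ is an orthonormal set and generates a sublattice isometric to $\mathbb{Z}^{|V_f(\mathfrak{h})|}$.

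For each slim vertex $x$, define
$$\psi(x):=\phi(x)-\sum_{z\in V_f(\mathfrak{h})}(\phi(x),\phi(z))\,\phi(z),$$
which is the orthogonal projection of $\phi(x)$ onto the complement of $W:=\mathrm{span}\{\phi(z)\mid z\in V_f(\mathfrak{h})\}$. The coefficient $(\phi(x),\phi(z))$ equals $\pm1$ according to the sign of $\{x,z\}$ if $z$ is a fat neighbor of $x$, and $0$ otherwise. By orthonormality,
$$(\psi(x),\psi(y))=(\phi(x),\phi(y))-\sum_{z\in V_f(\mathfrak{h})}(\phi(x),\phi(z))(\phi(y),\phi(z)).$$
Each summand is $+1$ when $z$ is a common fat neighbor joined to $x$ and $y$ by edges of equal sign, $-1$ when the signs differ, and $0$ otherwise. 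So the sum is exactly $n^f_{\mathfrak{h}}(x,y)$, which is $(CC^T)_{xy}$. Comparing with the definition of $\phi$ on slim pairs, including the diagonal case with norm $m$, shows that $\psi$ satisfies the defining conditions of a reduced representation of norm $m$.

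For the lattice statement, each $\psi(x)$ differs from $\phi(x)$ by an integer combination of fat vectors. Therefore
$$\Lambda(\mathfrak{h},m)=\langle \psi(x),\phi(z)\mid x\in V_s(\mathfrak{h}),\,z\in V_f(\mathfrak{h})\rangle .$$
The $\psi(x)$ lie in $W^\perp$ and the $\phi(z)$ span $W$, so the two generated sublattices are orthogonal. Their sum is therefore a direct sum, namely $\Lambda^{\rm red}(\mathfrak{h},m)\oplus\langle\phi(z)\rangle\cong\Lambda^{\rm red}(\mathfrak{h},m)\oplus\mathbb{Z}^{|V_f(\mathfrak{h})|}$.

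There is no serious obstacle. The only points needing care are matching the inner-product bookkeeping with the sign convention in $n^f_{\mathfrak{h}}$, and checking that $\psi$ takes values in a Euclidean space of suitable dimension (the ambient one of $\phi$).
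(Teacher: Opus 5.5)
Your proposal is correct and follows essentially the same route as the paper: it projects each slim vector orthogonally away from the orthonormal fat vectors, $\psi(x)=\phi(x)-\sum_{z\in V_f(\mathfrak{h})}(\phi(x),\phi(z))\phi(z)$, and uses $(\psi(x),\phi(z))=0$ to split $\Lambda(\mathfrak{h},m)$ as an orthogonal direct sum. You also spell out two steps the paper leaves implicit: the inner-product computation identifying the correction term with $n^f_{\mathfrak{h}}(x,y)$, and the change-of-generators argument showing that $\{\psi(x)\}\cup\{\phi(z)\}$ generates the same lattice.
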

\begin{proof}
  Let $\phi:V(\mathfrak{h})\rightarrow\mathbb{R}^n$ be a representation of norm $m$. Let $U$ be the subspace of $\mathbb{R}^n$ generated by $\{\phi(x)\mid x\in V_f(\mathfrak{h})\}$. Since $(\phi(x),\phi(y))=0$ for $x,y\in V_f(\mathfrak{h})$, we obtain $\{\phi(x)\mid x\in V_f(\mathfrak{h})\}$ is an orthogonal basis of $U$. Let $\psi(x)=\phi(x)-\sum_{y\in V_f(\mathfrak{h})}(\phi(x),\phi(y))\phi(y)$ for $x\in V_s(\mathfrak{h})$, then $\psi(x)$ is a reduced representation of norm $m$. Note that $(\psi(x),\phi(y))=0$ for $x\in V_s(\mathfrak{h}),y\in V_f(\mathfrak{h})$. Therefore, the lattice $\Lambda(\mathfrak{h},m)$ is isomorphic to $\Lambda^{\rm{red}}(\mathfrak{h},m)\oplus\mathbb{Z}^{|V_f(\mathfrak{h})|}$.
\end{proof}

For the unsigned case, Lemma \ref{reduced representation lemma} was shown by Jang et al. \cite{Jang2014on}. Our proof of the above lemma is essentially identical to theirs.

\begin{theorem}\label{reduced representation theorem}
  For a Hoffman signed graph $\mathfrak{h}=(H,\sigma,\ell)$ with smallest eigenvalue $\lambda_{\min}(\mathfrak{h})$, the following are equivalent.
  \begin{enumerate}[label=\rm{(\roman*)}]
    \item $\mathfrak{h}$ has a representation of norm $m$.
    \item $\mathfrak{h}$ has a reduced representation of norm $m$.
    \item $\lambda_{\min}(\mathfrak{h})\geq -m$.
  \end{enumerate}
\end{theorem}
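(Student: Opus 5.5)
The plan is to prove the cycle of implications (i)$\Rightarrow$(ii)$\Rightarrow$(iii)$\Rightarrow$(i). Throughout, order the vertices so that the slim vertices come first and write the adjacency matrix of $(H,\sigma)$ in block form as $\begin{pmatrix}A_s&C\\C^T&O\end{pmatrix}$.

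The implication (i)$\Rightarrow$(ii) is exactly Lemma \ref{reduced representation lemma}. Explicitly, given a representation $\phi$, set
$$\psi(x)=\phi(x)-\sum_{z\in V_f(\mathfrak{h})}(\phi(x),\phi(z))\phi(z).$$
For slim $x,y$ a direct expansion gives
$$(\psi(x),\psi(y))=(\phi(x),\phi(y))-\sum_{z\in V_f(\mathfrak{h})}C_{xz}C_{yz}.$$
The sum $\sum_{z} C_{xz}C_{yz}$ is precisely $n^f_{\mathfrak{h}}(x,y)$: a common fat neighbour contributes $+1$ when the two signs agree and $-1$ when they differ. So $\psi$ is a reduced representation of norm $m$.

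For (ii)$\Rightarrow$(iii), note that the Gram matrix of a reduced representation $\psi$ of norm $m$ is, entrywise, $mI+A_s-CC^T=mI+S(\mathfrak{h})$. Here one uses that $(CC^T)_{xy}=n^f_{\mathfrak{h}}(x,y)$ also on the diagonal, so the diagonal entries are $m-n^f_{\mathfrak{h}}(x,x)$, as required. A Gram matrix is positive semidefinite, so $mI+S(\mathfrak{h})\succeq0$, that is, $\lambda_{\min}(\mathfrak{h})\ge -m$.

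For (iii)$\Rightarrow$(i), assume $mI+S(\mathfrak{h})\succeq 0$. The plan is to build the representation directly from a factorization.
\begin{enumerate}
\item Write $mI+A_s-CC^T=M^TM$ for some real matrix $M$ with columns indexed by $V_s(\mathfrak{h})$, say $M$ of size $k\times|V_s|$.
\item Work in $\mathbb{R}^{k}\oplus\mathbb{R}^{|V_f|}$ and set $\phi(z)=(\mathbf{0},\mathbf{e}_z)$ for each fat vertex $z$.
\item For each slim vertex $x$, set $\phi(x)=(M\mathbf{e}_x,\,C^T\mathbf{e}_x)$.
\end{enumerate}
Now check the inner products. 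Distinct fat vertices are orthogonal, and each fat vertex has norm $1$. For slim $x$ and fat $z$ we get $(\phi(x),\phi(z))=C_{xz}$, which is $\pm1$ or $0$ according to the adjacency and its sign. For slim $x,y$,
$$(\phi(x),\phi(y))=(M^TM+CC^T)_{xy}=(mI+A_s)_{xy}.$$
This equals $m$ on the diagonal and the signed adjacency off it. Hence $\phi$ is a representation of norm $m$ (after padding the ambient dimension if needed).

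The only delicate point is bookkeeping. One must verify that the definition of $n^f_{\mathfrak{h}}$ coincides with $(CC^T)_{xy}$, including on the diagonal, where it equals the number of fat neighbours. One must also note that the dimension $n$ in the definition is arbitrary, so embedding into a larger space is allowed. No real obstacle is expected.
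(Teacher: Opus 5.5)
Your proposal is correct and follows the paper's proof step for step. The three implications go through the same routes: (i)$\Rightarrow$(ii) via the orthogonal projection of Lemma \ref{reduced representation lemma}, (ii)$\Rightarrow$(iii) via positive semidefiniteness of the Gram matrix $m\mathbf{I}+S(\mathfrak{h})$, and (iii)$\Rightarrow$(i) by factoring $m\mathbf{I}+S(\mathfrak{h})$ and appending the signed fat-incidence vector $C^T\mathbf{e}_x$, where your map $\phi(x)=(M\mathbf{e}_x,C^T\mathbf{e}_x)$ is exactly the paper's Cholesky-based $\varphi$ with $M=L^T$.
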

\begin{proof}
  It follows from Lemma \ref{reduced representation lemma} that $\rm{(i)}$ implies $\rm{(ii)}$.\par
Let $Sp(\mathfrak{h})$ be the special matrix of $\mathfrak{h}$. Let $\psi$ be a reduced representation of $\mathfrak{h}$ of norm $m$. The matrix $Sp(\mathfrak{h})+m\mathbf{I}$ is the Gram matrix of the image of $\psi$. Hence the matrix $Sp(\mathfrak{h})+m\mathbf{I}$ is positive semidefinite, in other words, the matrix $Sp(\mathfrak{h})$ has smallest eigenvalue at least $-m$. This shows that $\rm{(ii)}$ implies $\rm{(iii)}$.\par
Now we prove that $\rm{(iii)}$ implies $\rm{(i)}$. Assume that $\lambda_{\min}(\mathfrak{h}) \geq -m$, then the matrix $Sp(\mathfrak{h})+m\mathbf{I}$ is positive semidefinite, hence has a Cholesky factorization $LL^T$ with square $L$. The following map
$$
\varphi(x)=\left\{\begin{array}{ll}
\mathbf{e}_x, &\text{ if $x$ is fat,}\\\sum\limits_{y\in V_s(\mathfrak{h})}L_{xy}\mathbf{e}_y+\sum\limits_{y\in V_f(\mathfrak{h}),\sigma(\{x,y\})=+}\mathbf{e}_y-\sum\limits_{y\in V_f(\mathfrak{h}),\sigma(\{x,y\})=-}\mathbf{e}_y,&\text{ otherwise,}
\end{array}\right.$$
is a representation of $\mathfrak{h}$ of norm $m$, where $\{\mathbf{e}_x\mid x\in V(\mathfrak{h})\}$ is a set of standard unit vectors. This completes the proof.
\end{proof}
As a direct consequence of Theorem \ref{reduced representation theorem}, we find the relationship between the smallest eigenvalue of a Hoffman signed graph and the smallest eigenvalue of its induced Hoffman signed subgraph. For the unsigned case, this result was shown by Woo and Neumaier \cite{woo1995on}. 

\begin{lemma}\label{sub-minev}
  Let $\mathfrak{h}$ be a Hoffman signed graph. If $\mathfrak{g}$ is an induced Hoffman signed subgraph of $\mathfrak{h}$, then $\lambda_{\min}(\mathfrak{g})\geq\lambda_{\min}(\mathfrak{h})$. In particular,  $\lambda_{\min}(H_s,\sigma_s)\geq\lambda_{\min}(\mathfrak{h})$, where $(H_s,\sigma_s)$ is the slim graph of $\mathfrak{h}$.
\end{lemma}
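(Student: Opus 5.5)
The plan is to use the equivalence between condition (iii) and condition (i) in Theorem \ref{reduced representation theorem}, together with the observation that restricting a representation to a subset of vertices is again a representation. Set $m:=-\lambda_{\min}(\mathfrak{h})$. A representation of norm $m$ must have integer-valued Gram entries only off the diagonal, so a real $m$ is harmless here; the proof of Theorem \ref{reduced representation theorem} works verbatim for any real $m\ge 0$. If $\lambda_{\min}(\mathfrak{h})>0$, one can instead replace $m$ by any real number at least $-\lambda_{\min}(\mathfrak{h})$ for which the statement makes sense; I would state the argument for real $m$ and note this.

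\textbf{Step 1.} Since $\lambda_{\min}(\mathfrak{h})\geq -m$, the implication (iii)$\Rightarrow$(i) of Theorem \ref{reduced representation theorem} gives a representation $\phi:V(\mathfrak{h})\to\mathbb{R}^N$ of norm $m$.

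\textbf{Step 2.} Restrict $\phi$ to $V(\mathfrak{g})$. Because $\mathfrak{g}$ is an \emph{induced} Hoffman signed subgraph, three facts carry over for vertices of $\mathfrak{g}$: the labels (slim or fat) agree with those in $\mathfrak{h}$; two vertices are positively adjacent, negatively adjacent, or non-adjacent in $\mathfrak{g}$ exactly as in $\mathfrak{h}$; and the norms $m$ and $1$ are unchanged. Hence $\phi|_{V(\mathfrak{g})}$ is a representation of $\mathfrak{g}$ of norm $m$.

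\textbf{Step 3.} Apply (i)$\Rightarrow$(iii) of Theorem \ref{reduced representation theorem} to $\mathfrak{g}$. This gives $\lambda_{\min}(\mathfrak{g})\geq -m=\lambda_{\min}(\mathfrak{h})$. The second claim is the special case where $\mathfrak{g}$ is the slim graph, which is the induced subgraph on $V_s(\mathfrak{h})$ with only slim vertices; its special matrix is just the adjacency matrix $A(H_s,\sigma_s)$.

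\textbf{Main obstacle.} Step 2 involves a subtlety: $\mathfrak{g}$ must itself be a Hoffman signed graph. In particular, each fat vertex of $\mathfrak{g}$ must have a neighbor in $\mathfrak{g}$. I would take this as part of the hypothesis, and note that it is not actually used in Theorem \ref{reduced representation theorem}.

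For a more self-contained route, the direction (i)$\Rightarrow$(iii) can be checked directly. Project the slim vectors orthogonally away from the fat vectors, as in Lemma \ref{reduced representation lemma}. The Gram matrix of the projected vectors is $S(\mathfrak{g})+m\mathbf{I}$, which is therefore positive semidefinite. Here $n^f_{\mathfrak{g}}(x,y)$ sums only over the fat vertices lying in $\mathfrak{g}$, and these are orthonormal, so the computation goes through.
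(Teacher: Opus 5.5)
Your proof is correct and is essentially the paper's own argument. Both obtain a representation of norm $m=-\lambda_{\min}(\mathfrak{h})$ from Theorem~\ref{reduced representation theorem}, restrict it to the induced subgraph $\mathfrak{g}$, and apply the theorem in the reverse direction. Your caveat about $\lambda_{\min}(\mathfrak{h})>0$ is unnecessary, since $\mathrm{tr}\,S(\mathfrak{h})=-\mathrm{tr}(CC^T)\le 0$ already forces $\lambda_{\min}(\mathfrak{h})\le 0$.
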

\begin{proof}
    Assume that $\lambda_{\min}(\mathfrak{h})=-m$. By Theorem \ref{reduced representation theorem}, $\mathfrak{h}$ has a representation of norm $m$. Since $\mathfrak{g}$ is an induced Hoffman signed subgraph of $\mathfrak{h}$, then $\mathfrak{g}$ also has a representation of norm $m$. Thus, $\lambda_{\min}(\mathfrak{g})\geq -m=\lambda_{\min}(\mathfrak{h})$, by Theorem \ref{reduced representation theorem}. As the slim graph $(H_s,\sigma_s)$ is an induced Hoffman signed subgraph of $\mathfrak{h}$, we obtain that $\lambda_{\min}(H_s,\sigma_s)\geq\lambda_{\min}(\mathfrak{h})$.
\end{proof}

\begin{lemma}\label{integrability of two lattices}
  Let $\mathfrak{h}$ be a Hoffman signed graph with smallest eigenvalue $\lambda_{\min}(\mathfrak{h})$, and let $m$ be an integer with $m\geq -\lambda_{\min}(\mathfrak{h})$. The integral lattice $\Lambda^{\rm{red}}(\mathfrak{h},m)$ is $s$-integrable if and only if the integral lattice $\Lambda(\mathfrak{h},m)$ is $s$-integrable.
\end{lemma}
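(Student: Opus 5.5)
By Theorem \ref{reduced representation theorem}, $\mathfrak{h}$ has a representation of norm $m$, and by Lemma \ref{reduced representation lemma} we have $\Lambda(\mathfrak{h},m)\cong\Lambda^{\rm red}(\mathfrak{h},m)\oplus\mathbb{Z}^{|V_f(\mathfrak{h})|}$. The statement therefore reduces to a general fact: for an integral lattice $\Lambda_1$, the direct sum $\Lambda_1\oplus\mathbb{Z}^k$ is $s$-integrable if and only if $\Lambda_1$ is. I will prove both directions of this fact.

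\emph{$\Lambda^{\rm red}$ $s$-integrable $\Rightarrow$ $\Lambda$ $s$-integrable.} Suppose $\sqrt{s}\,\Lambda^{\rm red}(\mathfrak{h},m)$ embeds isometrically into $\mathbb{Z}^{k_1}$. Then $\sqrt{s}\,\mathbb{Z}^{k}$ embeds into $\mathbb{Z}^{sk}$: send the $i$-th standard basis vector to the sum of $s$ distinct standard unit vectors, using disjoint blocks for different $i$. The images are pairwise orthogonal of norm $s$, as required. Since the two summands are orthogonal, placing the two embeddings on disjoint coordinate sets gives an isometric embedding of $\sqrt{s}\,\Lambda(\mathfrak{h},m)$ into $\mathbb{Z}^{k_1+sk}$.

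\emph{$\Lambda$ $s$-integrable $\Rightarrow$ $\Lambda^{\rm red}$ $s$-integrable.} Here I use the concrete decomposition from the proof of Lemma \ref{reduced representation lemma}. For $x$ slim, $\psi(x)=\phi(x)-\sum_{y\in V_f}(\phi(x),\phi(y))\phi(y)$ is an integer combination of vectors of $\Lambda(\mathfrak{h},m)$, since the coefficients $(\phi(x),\phi(y))$ lie in $\{0,\pm1\}$. Hence $\Lambda^{\rm red}(\mathfrak{h},m)=\langle\psi(x)\rangle$ is a sublattice of $\Lambda(\mathfrak{h},m)$, with the inner product inherited from the ambient space. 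If $\iota:\sqrt{s}\,\Lambda(\mathfrak{h},m)\to\mathbb{Z}^{k'}$ is an isometric embedding, its restriction to $\sqrt{s}\,\Lambda^{\rm red}(\mathfrak{h},m)$ is still an isometric embedding into $\mathbb{Z}^{k'}$. So $\Lambda^{\rm red}(\mathfrak{h},m)$ is $s$-integrable.

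The only delicate point is the dimension condition $k\geq n$ in the definition of $s$-integrability. This is harmless, because we can always pad the target $\mathbb{Z}^k$ with extra coordinates. I also need to check that "embedding" means an isometric linear map, so that the restriction and direct-sum constructions above are legitimate; both are clear from the definition. Beyond these points the argument is routine, and the main content is the reduction to the lattice isomorphism of Lemma \ref{reduced representation lemma}.
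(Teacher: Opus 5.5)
Your proposal is correct and takes the same route as the paper. The paper's proof simply cites the orthogonal decomposition $\Lambda(\mathfrak{h},m)\cong\Lambda^{\rm red}(\mathfrak{h},m)\oplus\mathbb{Z}^{|V_f(\mathfrak{h})|}$ from Lemma \ref{reduced representation lemma} together with the definition of $s$-integrability. You make the omitted details explicit: existence of a norm-$m$ representation via Theorem \ref{reduced representation theorem}, the embedding $\sqrt{s}\,\mathbb{Z}^k\hookrightarrow\mathbb{Z}^{sk}$ for one direction, and restriction to the summand for the other.
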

\begin{proof}
    By Lemma \ref{reduced representation lemma} and the definition of integrability of lattices, we obtain $\Lambda^{\rm{red}}(\mathfrak{h},m)$ is $s$-integrable if and only if the integral lattice $\Lambda(\mathfrak{h},m)$ is $s$-integrable.
\end{proof}

 \begin{corollary}\label{integrability of slim graph}
  Let $\mathfrak{h}$ be a Hoffman signed graph with smallest eigenvalue $\lambda_{\min}(\mathfrak{h})$, and let $(H_s,\sigma_s)$ be the slim graph of $\mathfrak{h}$ with smallest eigenvalue $\lambda_{\min}(H_s,\sigma_s)$. Suppose $\lfloor\lambda_{\min}(\mathfrak{h})\rfloor=\lfloor\lambda_{\min}(H_s,\sigma_s)\rfloor$ holds. The graph $(H_s,\sigma_s)$ is $s$-integrable if the integral lattice $\Lambda(\mathfrak{h}, -\lfloor\lambda_{\min}(\mathfrak{h})\rfloor)$ is $s$-integrable.  
\end{corollary}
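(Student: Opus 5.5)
Set $m:=-\lfloor\lambda_{\min}(\mathfrak{h})\rfloor$. By hypothesis we also have $m=-\lfloor\lambda_{\min}(H_s,\sigma_s)\rfloor=\lceil-\lambda_{\min}(H_s,\sigma_s)\rceil$, which is exactly the shift used in the definition of $s$-integrability of the signed graph $(H_s,\sigma_s)$. In other words, we must produce an integer matrix $N$ with $s(A(H_s,\sigma_s)+m\mathbf{I})=N^TN$. We will get it by restricting a representation of $\mathfrak{h}$ to the slim vertices.

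Since $m\geq-\lambda_{\min}(\mathfrak{h})$, Theorem \ref{reduced representation theorem} provides a representation $\phi$ of $\mathfrak{h}$ of norm $m$. Then $\Lambda(\mathfrak{h},m)=\langle\phi(x)\mid x\in V(\mathfrak{h})\rangle$ is an integral lattice, and by assumption it is $s$-integrable. So there is an isometric embedding $\iota$ of $\sqrt{s}\,\Lambda(\mathfrak{h},m)$ into some $\mathbb{Z}^k$. (If one prefers to work with the reduced lattice, Lemma \ref{integrability of two lattices} shows the two integrability conditions are equivalent, but this is not needed.)

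For each slim vertex $x$, put $\mathbf{n}_x:=\iota(\sqrt{s}\,\phi(x))\in\mathbb{Z}^k$, and let $N$ be the integer matrix with columns $\mathbf{n}_x$, $x\in V_s(\mathfrak{h})$. Because $\iota$ preserves inner products, $(\mathbf{n}_x,\mathbf{n}_y)=s(\phi(x),\phi(y))$. By the definition of a representation, this equals $sm$ when $x=y$, $\pm s$ when $x$ and $y$ are positively or negatively adjacent, and $0$ otherwise. Hence $N^TN=s(A(H_s,\sigma_s)+m\mathbf{I})$, and $(H_s,\sigma_s)$ is $s$-integrable.

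The only point needing care is the choice of norm. The norm $m$ must simultaneously be at least $-\lambda_{\min}(\mathfrak{h})$, so that the representation of $\mathfrak{h}$ exists, and equal to $\lceil-\lambda_{\min}(H_s,\sigma_s)\rceil$, so that it matches the shift in the integrability definition. The hypothesis $\lfloor\lambda_{\min}(\mathfrak{h})\rfloor=\lfloor\lambda_{\min}(H_s,\sigma_s)\rfloor$ guarantees both. Without it, Lemma \ref{sub-minev} would only give $\lambda_{\min}(H_s,\sigma_s)\geq\lambda_{\min}(\mathfrak{h})$, and the two norms could differ.
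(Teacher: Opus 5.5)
Your proof is correct. It takes a different route from the paper, whose one-line proof simply invokes Lemma \ref{integrability of two lattices}, the equivalence of $s$-integrability for $\Lambda^{\rm red}(\mathfrak{h},m)$ and $\Lambda(\mathfrak{h},m)$. You never use the reduced lattice. Instead you restrict a norm-$m$ representation $\phi$ of $\mathfrak{h}$ (obtained from Theorem \ref{reduced representation theorem}) to the slim vertices. You observe that the vectors $\phi(x)$, $x\in V_s(\mathfrak{h})$, generate a sublattice of $\Lambda(\mathfrak{h},m)$ with Gram matrix $A(H_s,\sigma_s)+m\mathbf{I}$. Applying the embedding of $\sqrt{s}\,\Lambda(\mathfrak{h},m)$ into $\mathbb{Z}^k$ to these vectors then gives the integer matrix $N$. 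Your route makes explicit the step that actually links $\Lambda(\mathfrak{h},m)$ to the slim graph. The cited lemma by itself says nothing about the Gram matrix $A(H_s,\sigma_s)+m\mathbf{I}$, so this sublattice observation is implicitly needed in the paper's argument as well. Your route also shows that the floor hypothesis is exactly what makes $m$ coincide with the shift $\lceil-\lambda_{\min}(H_s,\sigma_s)\rceil$ in the definition of $s$-integrability. What the paper's citation adds is flexibility: together with your argument, the hypothesis can equivalently be checked on $\Lambda^{\rm red}(\mathfrak{h},m)$. That lattice has Gram matrix $S(\mathfrak{h})+m\mathbf{I}$ and is generated by vectors indexed by the slim vertices alone.
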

\begin{proof}
    This follows from Lemma \ref{integrability of two lattices} immediately. 
\end{proof}

\subsection{Associated Hoffman signed graphs}\label{subsec ahsg}
Let $m\geq 2,n\geq 2(m^2+m)$ be integers and $(G,\sigma)$ be a $\{\widetilde{K}_{2m}^{(0)},\widetilde{K}_{2m}^{(-)}\}$-switching-free signed graph. Let $\mathcal{C}(n)$ denote the set of maximal positive cliques of $(G,\sigma)$ with at least $n$ vertices. For $C\in\mathcal{C}(n)$, recall that $N_{(m)}^{(+)}(V(C))$ (resp. $N_{(m)}^{(-)}(V(C))$) denotes the $m$-positive-neighborhood (resp. the $m$-negative-neighborhood) of $C$. For convenience, let $N_{(m)}(V(C)):=N_{(m)}^{(+)}(V(C))\cup N_{(m)}^{(-)}(V(C))$. In Subsection \ref{subsec qpc}, we have seen that if switching the subgraph induced on $N_{(m)}(V(C))$ with respect to $N^{(-)}_{(m)}(V(C))$, we obtain the quasi-positive-clique with respect to $C$. Now we are ready to define associated Hoffman signed graphs.

\begin{definition}
Let $m\geq 2,n\geq 2(m^2+m)$ be integers and $(G,\sigma)$ be a $\{\widetilde{K}_{2m}^{(0)},\widetilde{K}_{2m}^{(-)}\}$-switching-free signed graph. Let $\mathcal{C}(n)$ denote the set of maximal positive cliques of $(G,\sigma)$ with at least $n$ vertices. Let $\{N_{(m)}(V(C))\mid C\in\mathcal{C}(n)\}=\{N_{(m)}(V(C_1)),\ldots, N_{(m)}(V(C_r))\}$. The Hoffman signed graph satisfying the following conditions is an \emph{associated Hoffman signed graph} of $(G,\sigma)$, with respect to the pair $(m,n)$, denoted by $\mathfrak{g}=\mathfrak{g}(G,\sigma,m,n)$.
\begin{enumerate}[label=\rm{(\roman*)}]
    \item $V_s(\mathfrak{g})=V(G)$, and the slim graph of $\mathfrak{g}$ is $(G,\sigma)$.
    \item $V_f(\mathfrak{g})=\{F_1,F_2,\ldots,F_r\}$, and each fat vertex $F_i$ is positively adjacent to each vertex in $N_{(m)}^{(+)}(V(C_i))$, negatively adjacent to each vertex in $N_{(m)}^{(-)}(V(C_i))$, and non-adjacent to the other vertices of $V(G)$, for $i=1,\ldots,r$.
\end{enumerate}
\end{definition}

Remark \ref{remark ahsg} is important to help us understand associated Hoffman signed graphs better.

\begin{remark}\label{remark ahsg}
\begin{enumerate}[label=\rm{(\roman*)}]
    \item In the above definition, the set $\{C_1,\ldots,C_r\}$ may merely be a proper subset of $\mathcal{C}(n)$, as there may be two distinct positive cliques $C,C'\in\mathcal{C}(n)$, such that $N_{(m)}(V(C))=N_{(m)}(V(C'))$, by Lemma \ref{two cliques signed case}.
    \item  It is possible that there exists another subset $\{C'_1,\ldots,C'_r\}$ of $\mathcal{C}(n)$, such that $N_{(m)}(V(C_1))=N_{(m)}(V(C'_1))$ for $i=1,\ldots,r$, then we obtain another associated Hoffman signed graph $\mathfrak{g}'$ of $(G,\sigma)$. Note that $\mathfrak{g'}$ may be different from $\mathfrak{g}$. However, they have the same special matrix.
\end{enumerate}
\end{remark}

Next proposition connects signed graphs and Hoffman signed graphs, which is crucial in the next section.

\begin{proposition}{\label{associatedProp}}
Let $m\geq 2,v_f\geq0,v_s\geq0,p\geq0$ be integers. There exists a positive integer $q=q(m,v_f,v_s,p)\geq 2(m^2+m)$, such that, for any signed graph $(G,\sigma)$, any integer $n\geq q$ and any Hoffman signed graph $\mathfrak{h}$ with at most $v_f$ fat vertices and at most $v_s$ slim vertices, the signed graph $G(\mathfrak{h},p)$ is isomorphic to an induced subgraph of $(G,\sigma)$, provided that the following hold.
  \begin{enumerate}[label=\rm{(\roman*)}]
    \item The signed graph $(G,\sigma)$  is $\{\widetilde{K}_{2m}^{(0)},\widetilde{K}_{2m}^{(-)}\}$-switching-free.
    \item The Hoffman signed graph $\mathfrak{h}$ is isomorphic to an induced subgraph of an associated Hoffman signed graph $\mathfrak{g}(G,\sigma,m,n)$.
  \end{enumerate}
\end{proposition}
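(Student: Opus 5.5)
We need to embed $G(\mathfrak{h},p)$ into $(G,\sigma)$: the slim vertices of $\mathfrak{h}$ are actual vertices of $G$, and for each fat vertex $F_i$ of $\mathfrak{h}$ we must find $p$ vertices of $G$ that form a positive clique, are positively adjacent to the slim vertices positively adjacent to $F_i$, negatively adjacent to those negatively adjacent to $F_i$, and non-adjacent to the other slim vertices of $\mathfrak{h}$. Distinct fat vertices must receive disjoint cliques with no edges between them. The natural source of such vertices is the clique $C_i\in\mathcal{C}(n)$ whose neighborhood defines $F_i$. We choose the embedded vertices inside $V(C_i)$ itself, so that they form a positive clique automatically and lie in $N^{(+)}_{(m)}(V(C_i))$ (since $n\ge m+1$).

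First we handle the slim vertices. Let $x$ be a slim vertex of $\mathfrak{h}$. If $x\in N^{(+)}_{(m)}(V(C_i))$, then by Lemma \ref{neighborhood of 3m-2-clique signed case} $x$ has at most $2m-2$ non-positive-neighbors in $C_i$. The case $x\in N^{(-)}_{(m)}(V(C_i))$ is symmetric. If $x\notin N_{(m)}(V(C_i))$, i.e.\ $x$ is non-adjacent to $F_i$, then case (iii) of that lemma applies, so $x$ has at most $2m-2$ neighbors in $C_i$. Hence, after deleting at most $(2m-2)v_s$ vertices of $C_i$, every remaining vertex of $C_i$ is adjacent to each slim vertex exactly as $F_i$ is. We also delete the slim vertices themselves, at most $v_s$ more.

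Second we handle different fat vertices $F_i\ne F_j$. We need vertices of $C_i$ and $C_j$ that are disjoint and mutually non-adjacent. Since $N_{(m)}(V(C_i))\neq N_{(m)}(V(C_j))$, Lemma \ref{two cliques signed case} gives alternative (i). So at most $2m-2$ vertices of $C_j$ lie in $N_{(m)}(V(C_i))$, and each other vertex of $C_j$ has at most $2m-2$ neighbors in $C_i$. We process the fat vertices one at a time, greedily. For $F_1$ we pick $p$ good vertices of $C_1$. For $F_j$ we discard from $C_j$ the bad vertices for the slim conditions, the at most $2m-2$ vertices in $N_{(m)}(V(C_i))$ for each earlier $i$, and the vertices adjacent to the already chosen $p$ vertices in the earlier $C_i$. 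That last set needs care, because "$y\in C_i$ has at most $2m-2$ neighbors in $C_j$" requires $y\notin N_{(m)}(V(C_j))$. We guarantee this by also discarding from $C_i$, in advance, the at most $2m-2$ vertices lying in $N_{(m)}(V(C_j))$, for every $j$. Lemma \ref{two cliques signed case} applied with the roles of $C_i,C_j$ swapped bounds this discarded set. Disjointness follows similarly, since a common vertex of $C_i$ and $C_j$ would lie in both neighborhoods.

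The discarded total per clique is then bounded by
$$(2m-2)v_s+v_s+(2m-2)v_f+(2m-2)p\,v_f+p,$$
so taking $q$ to be this quantity plus $2(m^2+m)$ ensures every $|C_i|\ge n\ge q$ leaves $p$ vertices. The step I expect to be the main obstacle is the bookkeeping in the second step: making sure that the chosen vertices in different cliques are mutually non-adjacent, which needs the pre-deletion trick above, and checking that all needed $C_i$ are distinct, which holds because distinct fat vertices come from distinct neighborhoods. A final check is that no chosen clique vertex coincides with a slim vertex of $\mathfrak{h}$.
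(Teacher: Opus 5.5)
Your proposal is correct and uses the same ingredients as the paper. Lemma \ref{neighborhood of 3m-2-clique signed case} carves out of each $C_i$ the vertices that see the slim vertices exactly as $F_i$ does, at a cost of $(2m-1)v_s$ vertices, and alternative (i) of Lemma \ref{two cliques signed case} then removes the at most $2m-2$ vertices of $C_i$ lying in $N_{(m)}(V(C_j))$ and, afterwards, the at most $(2m-2)p$ neighbors of each previously chosen clique.

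The only difference is organisational. The paper inducts on the number of fat vertices, inflating $p$ to $p+2m-2$ to pay for trimming the earlier cliques against $N_{(m)}(V(C_{v_f'}))$. Your up-front pre-deletion turns this into a single greedy pass, and it makes explicit that the chosen cliques lie inside the $C_i$, which the paper's induction uses only implicitly.
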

\begin{proof} Let $q=\max_{0\leq v_f'\leq v_f,0\leq v_s'\leq v_s}\{q'(m,v_f',v_s',p)\}$, where $q'(m,0,v_s',p):=2(m^2+m)$, and $q'(m,v_f',v_s',p):=\max\{q'(m,v_f'-1,v_s',p+2m-2),(2m-1)v_s'+(2m-2)(v_f'-1)p+p\}$ for $v_f'\geq1$. Let $n\geq q$, $(G,\sigma)$ be a signed graph satisfying the given conditions. Let $\mathfrak{g}:=\mathfrak{g}(G,\sigma,m,n)$ be an associated Hoffman signed graph of $(G,\sigma)$. Let $\mathfrak{h}$ be an induced Hoffman subgraph of $\mathfrak{g}$ with $v_f'$ fat vertices and $v_s'$ slim vertices where $v_f'\leq v_f$ and $v_s'\leq v_s$.
We will prove this proposition by induction on $v_f'$.\par

When $v_f'=0$, $\mathfrak{h}$ contains no fat vertices. Thus, $G(\mathfrak{h},p)$ is the slim graph of $\mathfrak{{h}}$. It is clear that $G(\mathfrak{h},p)$ is an induced subgraph of $(G,\sigma)$.\par

Now we consider the case $v_f'\geq 1$. Let $V_f(\mathfrak{h})=\{F_1,F_2,\ldots,F_{v_f'}\}$ be the fat vertex set of $\mathfrak{h}$ and $V_s(\mathfrak{h})$ be the slim vertex set of $\mathfrak{h}$. By the definition of associated Hoffman signed graphs, there exists $C_i\in\mathcal{C}(n)$, such that  the set $N_{(m)}(V(C_i))$ is the neighborhood of $F_i$ in $\mathfrak{g}$, for $i=1,2,\ldots,v_f'$.\par

Let $\mathfrak{h}_1$ denote the subgraph of $\mathfrak{h}$ obtained by deleting the fat vertex $F_{v_f'}$. By the induction hypothesis, the signed graph $G(\mathfrak{h}_1,p+2m-2)$ is isomorphic to an induced subgraph of $(G,\sigma)$. Denote by $D_i$ the positive clique in $G(\mathfrak{h}_1,p+2m-2)$ replacing the fat vertex $F_i$ for $i=1,2,\ldots,v_f'-1$.\par
By Lemma \ref{neighborhood of 3m-2-clique signed case}, for $x\in V_s(\mathfrak{h})$, if $x$ is positively adjacent to $F_{v_f'}$, then $x$ has at most $2m-2$ non-positive neighbors in $C_{v_f'}$; if $x$ is negatively adjacent to $F_{v_f'}$, then $x$ has at most $2m-2$ non-negative neighbors in $C_{v_f'}$; if $x$ is not adjacent to $F$, then it has at most $2m-2$ neighbors in $C_{v_f'}$. Therefore, we are able to obtain a positive clique $D_{v_f'}$ containing at least $n-v_s'-(2m-2)v_s'$ vertices, such that $V_s(\mathfrak{h})\cap V(D_{v_f'})=\emptyset$, and for $x\in  V_s(\mathfrak{h})$, if $x$ is positively adjacent to $F_{v_f'}$, then $x$ is positively adjacent to each vertex of $D_{v_f'}$; if $x$ is negatively adjacent to $F_{v_f'}$, then $x$ is negatively adjacent to each vertex of $D_{v_f'}$; if $x$ is non-adjacent to $F_{v_f'}$, then $x$ is non-adjacent to each vertex of $D_{v_f'}$.\par

By Lemma \ref{two cliques signed case}, $|V(D_i)-N_{(m)}(V(C_{v_f'}))|\geq p+2m-2-2(m-1)=p$, thus there exists a positive clique of order $p$ with vertex set in $V(D_i)-N_{(m)}(V(C_{v_f'}))$, denoted by $D_i'$. Also, $|V(D_{v_f'})-\{x\in V(D_{v_f'})\mid x \text{ has at least one neighbor in }\bigcup_{i=1}^{v_f'-1}V(D_i')\}|\geq |V(D_{v_f'})|-\sum_{i=1}^{v_f'-1}(2m-2)|V(D_i')|\geq n-v_s'-(2m-2)v_s'-(v_f'-1)(2m-2)p\geq p$. Hence, there exists a positive clique of order $p$ in $D_{v_f'}$, denoted by $D_{v_f'}'$, such that each vertex in $D_{v_f'}'$ is not adjacent to any vertex in $\bigcup_{i=1}^{v_f'-1}V(D_i')$.\par
Note that $D_1',D_2',\ldots, D_{v_f'-1}'$ are isolated positive cliques in $(G,\sigma)$ with $p$ vertices. Thus, the subgraph of $(G,\sigma)$ induced on $V_s(\mathfrak{h})\cup(\bigcup^{v_f'}_{i=1}V(D_i'))$ is isomorphic to $G(\mathfrak{h},p)$.\par
\end{proof} 

\section{\texorpdfstring{Proofs of Theorem \ref{equaity of fobidden subgraphs and min ev} $\rm{(ii)}$ and Theorem \ref{structure theory thm}}{Proof of Theorem \ref{equaity of fobidden subgraphs and min ev} (ii) and Theorem \ref{structure theory thm}}}\label{sec of pf thm}
 In this section, we will prove Theorem \ref{equaity of fobidden subgraphs and min ev} $\rm{(ii)}$ and Theorem \ref{structure theory thm}. First, we show the following theorem.
\begin{theorem}\label{structure with forbidden graphs}
  Let $m\geq 2,s\geq 3,t\geq 2$ be integers. There exist two positive integers $        q=q(m,t),\kappa_4=\kappa_4(m,s,t)$ such that for any $\{\widetilde{K}_{2m}^{(0)},\widetilde{K}_{2m}^{(-)}$, $(K_s,-)$, $(K_{1,t},+)\}$-switching-free signed graph $(G,\sigma)$ and any integer $n\geq q$, if $\mathfrak{g}=\mathfrak{g}(G,\sigma,m,n)$ is an associated Hoffman signed graph  of $(G,\sigma)$ with respect to the pair $(m,n)$, then the subgraphs $N_1,\ldots,N_r$ $(r=|V_f(\mathfrak{g})|)$ induced on the neighbors of each fat vertex of $\mathfrak{g}$ satisfy the following.\par
  \begin{enumerate}[label=\rm{(\roman*)}]
    \item Each vertex in $(G,\sigma)$ is contained in at most $t-1$ $N_i$'s.
    \item The induced subgraph $N_i$ is switching equivalent to a signed graph whose positive graph is a $\kappa_4$-plex, for $i=1,\ldots,r$.
    \item The intersection $V(N_i)\cap V(N_j)$ contains at most $\kappa_4$ vertices, for $1\leq i<j\leq r$.
    \item The subgraph $(G',\sigma')$ has maximum valency at most $R(n,s,t)-1$, where $G'= (V(G),E(G)\backslash\bigcup_{i=1}^r E(N_i))$ and $\sigma'=\sigma|_{E(G')}$.
  \end{enumerate}
\end{theorem}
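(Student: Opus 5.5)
I would take $q=q(m,t):=\max\{2(m^2+m),(2m-1)t+1\}$, so that all the results of Section \ref{sec sg and qpc} apply to every $n\geq q$. For each $i$ I fix a clique $C_i\in\mathcal{C}(n)$ with $V(N_i)=N_{(m)}(V(C_i))$. By construction $N_i$ is the subgraph of $(G,\sigma)$ induced on $V(Q(C_i))$, and $Q(C_i)$ is obtained from $N_i$ by switching. Distinct fat vertices correspond to distinct sets $N_{(m)}(V(C_i))$. I then set $\kappa_4:=\max\{\kappa_2,4\kappa_3\}$, and also enlarge it if needed; the four items are proved separately.

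\textbf{Item (ii).} This is exactly Proposition \ref{quasi-cli sw to almost posi-clique}.

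\textbf{Item (iii).} The sets $V(N_i)\neq V(N_j)$ differ, so Lemma \ref{quasi-cli almost isolate} applies. Write $V(N_i)\cap V(N_j)$ as the union of the four sets $N^{(\varepsilon_1)}_{(m)}(V(C_i))\cap N^{(\varepsilon_2)}_{(m)}(V(C_j))$. Each has size at most $\kappa_3$, so the intersection has at most $4\kappa_3$ vertices.

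\textbf{Item (i).} Suppose a vertex $x$ lies in $t$ of the $N_i$'s, say $N_1,\dots,N_t$. In each $C_i$, $x$ has at least $n-(2m-2)$ neighbors of one fixed sign $\varepsilon_i$ (Lemma \ref{neighborhood of 3m-2-clique signed case}). I want to pick $y_i\in V(C_i)$ with three properties: $y_i\sim x$ with sign $\varepsilon_i$; the $y_i$ are pairwise non-adjacent; and $y_i\notin\{x\}$. Once this is done, switching on $\{y_i:\varepsilon_i=-\}$ turns $\{x,y_1,\dots,y_t\}$ into an induced $(K_{1,t},+)$, a contradiction.

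To choose the $y_i$ greedily I need two facts.
\begin{itemize}
\item A vertex of $C_j$ lying outside $N_{(m)}(V(C_i))$ has at most $2m-2$ neighbors in $C_i$.
\item By Lemma \ref{two cliques signed case} (alternative (ii) is excluded since the sets differ), at most $2m-2$ vertices of $C_j$ lie in $N_{(m)}(V(C_i))$.
\end{itemize}
Each previously chosen $y_j$ therefore kills at most $2m-2$ candidates in $C_i$, apart from the at most $2m-2$ vertices of $C_i$ inside $N_{(m)}(V(C_j))$. In total at most about $(t-1)(4m-4)+2m-2$ candidates in $C_i$ are excluded, which is less than $n$ because $n\geq(2m-1)t+1$. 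If that bound turns out too tight, I would simply enlarge $q$, since $q$ only needs to depend on $m$ and $t$.

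\textbf{Item (iv).} I expect this to be the main obstacle. Suppose $x$ has $R(n,s,t)$ neighbors in $G'$. Switch so that all these edges are positive; this changes neither forbidden-subgraph freeness nor the edge sets. By Theorem \ref{Ramseynumber}, the neighborhood contains one of the following:
\begin{itemize}
\item an edgeless set of size $t$, which with $x$ gives a switched $(K_{1,t},+)$, impossible;
\item a negative clique $(K_s,-)$, impossible;
\item a positive clique $D$ of size $n$.
\end{itemize}
In the last case $D\cup\{x\}$ is a positive clique of size at least $n$ in the switched graph. Extend it to a maximal positive clique $C\in\mathcal{C}(n)$; switching only relabels $N^{(\pm)}$, so $N_{(m)}(V(C))$ is unchanged. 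Then $x$ and every vertex of $D$ lie in $C\subseteq N_{(m)}(V(C))$. This set equals some $V(N_i)$, so all edges from $x$ to $D$ lie in $E(N_i)$, contradicting that they are edges of $G'$.

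The delicate point is to check that the maximal-clique family $\mathcal{C}(n)$ and the sets $N_{(m)}(V(C))$ are switching-invariant in the right sense. I would handle this by doing the Ramsey argument directly in the $\mathbb{F}_3$-weighted setting, where a clique that is positive after switching corresponds to a switching class. If this is troublesome, an alternative is to avoid switching altogether: apply Ramsey with colours ``positive edge'', ``negative edge'' and ``non-edge'' to the $G'$-neighbours of $x$. A positive clique of size $n$ inside $N^{(+)}(x)$, or inside $N^{(-)}(x)$ (switching $x$ alone), again lands $x$ in its clique's $N_{(m)}$-set.
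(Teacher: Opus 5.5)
Your treatment of (ii) and (iii) is the paper's: $\kappa_4=\max\{\kappa_2,4\kappa_3\}$, Proposition \ref{quasi-cli sw to almost posi-clique}, and the four-way split via Lemma \ref{quasi-cli almost isolate}. Your greedy star construction for (i) is also the paper's. Make sure, as the paper does with its sets $U_i$, that $C_i$ is purged of the at most $2m-2$ vertices lying in $N_{(m)}(V(C_j))$ for \emph{every} $j\neq i$, not only the earlier ones. That is what guarantees each chosen $y_j$ has at most $2m-2$ neighbours in every other $C_i$; your count $(t-1)(4m-4)+2m-2$ already allows for it.

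The inequality you then invoke is false, however. For $m=2$, $t=4$ your $q$ equals $\max\{12,13\}=13$, while your own exclusion bound is $(t-1)(4m-4)+2m-2=14$. So $q$ really must be enlarged, as you anticipated. The paper takes $q=\max\{2(m^2+m),4(t-1)(m-1)+m\}$. It uses that $x$ has at most $m-1$ non-neighbours in $C_i$, so $|U_i|\ge n-(m-1)-(t-1)(2m-2)\ge (t-1)(2m-2)+1$.

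For (iv), your primary switching argument fails. The sets $V(N_i)$ are defined from maximal positive cliques of the \emph{original} signing $\sigma$. After switching on the negative $G'$-neighbours of $x$, the clique $D$ and its extension $C\supseteq D\cup\{x\}$ are positive only in the switched graph. In $(G,\sigma)$ every edge between $D\cap N^{(+)}(x)$ and $D\cap N^{(-)}(x)$ is negative, so $C$ need not belong to $\mathcal{C}(n)$ of $(G,\sigma)$. Also, ``switching only relabels $N^{(\pm)}$'' is false once the switching set splits $V(C)$. Hence ``this set equals some $V(N_i)$'' is unjustified. Passing to the larger half of $D$ would repair this only at the cost of valency about $R(2n,s,t)$, not the stated bound.

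Your fallback is the right argument and is exactly the paper's. Apply Theorem \ref{Ramseynumber} to the whole $G'$-neighbourhood of $x$, with colours read off from $(G,\sigma)$. Then an independent $t$-set is independent in $G$ and, together with $x$, gives an induced signed star switching equivalent to $(K_{1,t},+)$. The resulting $(K_n,+)$ is positive in $\sigma$ and lies in some $C\in\mathcal{C}(n)$. Since $x$ is adjacent to all $n\ge 3m-2$ of its vertices, Lemma \ref{neighborhood of 3m-2-clique signed case} gives $x\in N_{(m)}(V(C))=V(N_i)$ for some $i$. It is irrelevant that $x\notin C$ in general or that the clique meets both $N^{(+)}(x)$ and $N^{(-)}(x)$. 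So do not switch at all. Also do not run Ramsey separately inside $N^{(+)}(x)$ and $N^{(-)}(x)$, which would double the valency threshold.
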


\begin{proof}
Let $q:=\max\{2(m^2+m),4(t-1)(m-1)+m\}$ and $\kappa_4:=\max\{\kappa_2,4\kappa_3\}$, where $\kappa_2$ and $\kappa_3$ are as defined in Proposition \ref{quasi-cli sw to almost posi-clique} and Lemma \ref{quasi-cli almost isolate} respectively. For any $n\geq q$, denote by $\mathcal{C}(n)$ the set $\{C\mid C$ is a maximal positive clique of $(G,\sigma)$ with at least $n$ vertices$\}$, and for $C\in\mathcal{C}(n)$, let $N_{(m)}(V(C))=N_{(m)}^{(+)}(V(C))\cup N_{(m)}^{(-)}(V(C))$. Let $\{N_{(m)}(V(C_1)),\ldots,N_{(m)}(V(C_r))\}=\{N_{(m)}(V(C))\mid C\in \mathcal{C}(n)\}$. Let $N_i:=(G,\sigma)_{N_{(m)}(V(C_i))}$ denote the subgraph of $(G,\sigma)$ induced on $N_{(m)}(V(C_i))$ for $i=1,\ldots,r$. \par

$\rm{(i)}$ Suppose that there exists a vertex $x$ such that $x$ is contained in at least $t$ $N_i$'s. Without loss of generality, we may assume that $x\in \bigcap_{i=1}^t N_{(m)}(V(C_i))$. Let $U_i=V(C_i)\cap\{y\mid y\text{ is adjacent to }x\}-\bigcup_{1\leq j\leq t}^{j\neq i}N_{(m)}(V(C_j))$. By Lemma \ref{neighborhood of 3m-2-clique signed case} and Lemma \ref{two cliques signed case}, $|U_i|\geq n-(m-1)-(t-1)(2m-2)\geq 4(t-1)(m-1)+m-(m-1)-(t-1)(2m-2)\geq (t-1)(2m-2)+1$, for $1\leq i\leq t$. Take a vertex $y_1$ from $U_1$. For $i=2,\ldots,t$, there exists $y_i\in U_i-\bigcup_{j=1}^{i-1}\{y\mid y\text{ is adjacent to }y_j\}$, since $|U_i-\bigcup_{j=1}^{i-1}\{y\mid y\text{ is adjacent to }y_j\}|\geq(t-1)(2m-2)+1-(i-1)(2m-2)\geq1$. The subgraph induced on $\{x,y_1,y_2,\ldots,y_t\}$ is switching equivalent to $(K_{1,t},+)$, which is a contradiction.\par

$\rm{(ii)}$ Let $Q_i=Q(C_i)$ be the quasi-positive-clique obtained from $N_i$ by switching on $N_{(m)}^{(-)}(V(C_i))$. By Proposition \ref{quasi-cli sw to almost posi-clique}, the positive graph of $Q_i$ is a $\kappa_4$-plex, as $\kappa_4\geq\kappa_2$. Therefore, (ii) holds.\par

$\rm{(iii)}$ By Lemma \ref{quasi-cli almost isolate} , we have $|V(N_i)\cap V(N_j)|\leq \sum_{\varepsilon_1,\varepsilon_2\in \{+,-\}}|V(N_{(m)}^{(\varepsilon_1)}(V(C_i))\cap V(N_{(m)}^{(\varepsilon_2)}(V(C_j))|\leq 4\kappa_3\leq \kappa_4$ for $1\leq i<j\leq r$.  \par

$\rm{(iv)}$ Suppose that there exists a vertex $z$ in $(G',\sigma')$ with valency at least $R(n,s,t)$. Let $(H,\sigma_H)$ denote the subgraph of $(G,\sigma)$ induced on the neighbors of $z$ in $(G',\sigma')$. Since $(G,\sigma)$ is $\{(K_s,-)$, $(K_{1,t},+)\}$-switching-free, the subgraph $(H,\sigma_H)$ contains a $(K_n,+)$, by Theorem \ref{Ramseynumber}. Notice that $(K_n,+)$ is contained in some maximal positive clique $C\in\mathcal{C}(n)$ in $(G,\sigma)$. Since $\{N_{(m)}(V(C_1)),\ldots,N_{(m)}(V(C_r))\}=\{N_{(m)}(V(C))\mid C\in \mathcal{C}(n)\}$, there exists $i$ such that $N_{(m)}(C)=N_{(m)}(C_i)$. Note that $z$ is adjacent to each vertex in this $(K_n,+)$. Thus, by Lemma \ref{neighborhood of 3m-2-clique signed case}, $z\in N_{(m)}(C)=N_{(m)}(C_i)$, and the edges between $z$ and this $(K_n,+)$ are in $N_i$, not in $(G',\sigma')$. This gives a contradiction and $\rm{(iv)}$ holds.
\end{proof}
Now we prove Theorem \ref{equaity of fobidden subgraphs and min ev} $\rm{(ii)}$.\par
\noindent
\textbf{Proof of Theorem \ref{equaity of fobidden subgraphs and min ev}} $\rm{(ii)}$. When $t=1$, $(G,\sigma)$ is an empty graph, as it is $(K_2,-)$-switching-free. In this case, $\lambda_{\min}(G,\sigma)=0$. Now we may assume $t\geq 2$. Let $\kappa:=\max\{\kappa_4(t,t+1,t),R(q(t,t),t+1,t)-1\}$, where $q(t,t),\kappa_4(t,t+1,t)$ are as defined in Theorem \ref{structure with forbidden graphs}. Let $\mathfrak{g}:=\mathfrak{g}(G,\sigma,t,q(t,t))$ be an associated Hoffman graph of $(G,\sigma)$ with respect to the pair $(t,q(t,t))$ and $N_1,N_2,\ldots,N_r$ be the subgraphs induced on the neighbors of each fat vertex of $\mathfrak{g}$, where $r=|V_f(\mathfrak{g})|$. Let $S:=S(\mathfrak{g})$ be the special matrix of $\mathfrak{g}$. 
By Lemma \ref{sub-minev}, we have $\lambda_{\min}(G,\sigma)\geq\lambda_{\min}(\mathfrak{g})$.\par
Given a vertex $x$ of $(G,\sigma)$, assume that $N_1,\ldots,N_p$ are all subgraphs among $N_1,\ldots,N_r$ which contain $x$. By Theorem \ref{structure with forbidden graphs}, $p\leq t-1$. Now we are going to look at the vertices $y$ of $(G,\sigma)$ such that $S_{xy}\neq 0$. 
Let 
\begin{align*}
    &U=\{y\mid y\in V_s(\mathfrak{g})-\{x\},S_{xy}\neq0\},  \\
     &U_0=\{y\in U\mid y \text{ and }x \text{ have no common fat neighbors}\}, \\
     &U_1=\{y\in U\mid y \text{ and }x \text{ have exactly one common fat neighbor}\}, \\
     &U_{\geq 2}=\{y\in U\mid y \text{ and }x \text{ have at least two common fat neighbors}\}.
\end{align*}

For any $y\in U_0$, $|S_{xy}|=1$ holds, and by Theorem \ref{structure with forbidden graphs} $\rm{(iv)}$, we have $|U_0|\leq R(q(t,t),t+1,t)-1\leq \kappa$.
For any $y\in U_1$, $|S_{xy}|\leq2$ holds. Note that $|U_1|=|U_1\cap(\bigcup_{i=1}^p V(N_i))|\leq \sum_{i=1}^p|U_i\cap V(N_i)|$. Let $Q_i$ denote the corresponding quasi-positive-clique with respect to $N_i$. For any $y\in U_1\cap V(N_i)$, $S_{xy}\neq0$ implies that $x$ and $y$ are not positively adjacent in $Q_i$. By Theorem \ref{structure with forbidden graphs} $\rm{(ii)}$, we have $|U_1\cap V(N_1)|\leq \kappa_4-1\leq \kappa-1$. Hence, $|U_1|\leq p(\kappa-1)\leq (t-1)(\kappa-1)$.
For any $y\in U_{\geq2}$, since $x$ has at most $t-1$ fat neighbors, $|S_{xy}|\leq 1+(t-1)=t$ holds. Moreover, if $y\in U_{\geq2}$, then $x,y\in V(N_i)\cap V(N_j)$ for some $1\leq i<j \leq p$. By Theorem \ref{structure with forbidden graphs} $\rm{(iii)}$, $|V(N_i)\cap V(N_j)|\leq \kappa_4\leq \kappa$. Thus, $|U_{\geq2}|\leq \binom{p-1}{2}\kappa_4\leq\binom{t-1}{2}\kappa$.\par

Therefore, 
\begin{align*}
    \sum_y|S_{xy}|&=|S_{xx}|+\sum_{y\in U_0}|S_{xy}|+\sum_{y\in U_1}|S_{xy}|+\sum_{y\in U_{\geq2}}|S_{xy}|\\
    &\leq t-1+|U_0|+2|U_1|+t\cdot|U_{\geq2}|\\
    &\leq t-1+\kappa+2(t-1)(\kappa-1)+t\binom{t-1}{2}\kappa \\
    &=\frac{t(t-1)(t-2)\kappa}{2}+2t\kappa-\kappa-t+1,
\end{align*} 
and 
$$\lambda_{\min}(G,\sigma)\geq \lambda_{\min}(\mathfrak{g})\geq -\max_x\left|{\sum_yS_{xy}}\right|\geq -\max_x{\sum_y|S_{xy}|}\geq-\frac{t(t-1)(t-2)\kappa}{2}-2t\kappa+\kappa+t-1.$$
This completes the proof.
\qed

For signed graph with fixed smallest eigenvalue, we obtain the following result.

\begin{theorem}\label{structure th for fixed min ev}
Let $\lambda< -1$ be a real number. There exist positive integers $m_\lambda,q_\lambda$ such that for any integer $n\geq q_{\lambda}$, a positive integer $d=d(\lambda,n)$ satisfying the following exists.\par
For any signed graph $(G,\sigma)$ with smallest eigenvalue at least $\lambda$ and minimum valency at least $d(\lambda,n)$, if $\mathfrak{g}(G,\sigma,m_\lambda,n)$ is an associated Hoffman signed graph of $(G,\sigma)$ with respect to the pair $(m_\lambda,n)$, then the subgraphs $N_1,N_2,\ldots,N_r$ $(r=|V_f(\mathfrak{g})|)$ induced on the neighbors of each fat vertex of $\mathfrak{g}$ have the following properties. 
\begin{enumerate}[label=\rm{(\roman*)}]
  \item Each vertex in $(G,\sigma)$ is contained in at least one and at most $\lfloor-\lambda\rfloor$ $N_i$'s.
  \item The induced subgraph $N_i$ is switching equivalent to a signed graph whose positive graph is a $\lfloor \lambda^2+2\lambda+2\rfloor$-plex, for $i=1,2,\ldots,r$.
  \item The intersection $V(N_i)\cap V(N_j)$ contains at most $4\lfloor-\lambda\rfloor-4$ vertices for $1\leq i< j\leq r$.
  \item The subgraph $(G',\sigma')$ has maximum valency at most $d(\lambda,n)-1$, where $G'= (V(G),E(G)\backslash\bigcup_{i=1}^r E(N_i))$ and $\sigma'=\sigma|_{E(G')}$.
\end{enumerate}
\end{theorem}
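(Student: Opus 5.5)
Theorem \ref{equaity of fobidden subgraphs and min ev} (i) gives a $t=t(\lambda)$ making $(G,\sigma)$ switching-free with respect to the four forbidden graphs. We therefore take $m_\lambda:=t(\lambda)$, $s=t+1$, and $q_\lambda:=q(m_\lambda,t)$ as in Theorem \ref{structure with forbidden graphs}. That theorem then already yields the qualitative versions of (i)--(iv), with constants $t-1$, $\kappa_4$, $\kappa_4$ and $R(n,s,t)-1$. Put $d(\lambda,n):=R(n,t+1,t)$. Then (iv) holds as stated, and it also gives the ``at least one'' part of (i): a vertex of valency at least $d$ cannot have all its edges in $G'$, so it lies in some $N_i$.

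The remaining work is to sharpen the constants to $\lfloor-\lambda\rfloor$, $\lfloor\lambda^2+2\lambda+2\rfloor$ and $4\lfloor-\lambda\rfloor-4$. We argue each by contradiction: build a small Hoffman signed subgraph $\mathfrak{h}$ of $\mathfrak{g}$ with $\lambda_{\min}(\mathfrak{h})<\lambda$, apply Proposition \ref{associatedProp} to embed $G(\mathfrak{h},p)$ as an induced subgraph of $(G,\sigma)$ for $p$ large, and use Theorem \ref{Hoffman-Ostrowski Theorem of signed case} together with interlacing to conclude $\lambda_{\min}(G,\sigma)<\lambda$. We enlarge $q_\lambda$ to exceed the value $q(m_\lambda,v_f,v_s,p)$ that Proposition \ref{associatedProp} requires for the bounded sizes used below.

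For (i), a vertex $x$ lying in $k\ge\lfloor-\lambda\rfloor+1$ of the $N_i$'s gives a Hoffman graph with one slim vertex and $k$ fat neighbors. Its special matrix is $(-k)$, and $-k<\lambda$. For (ii), switch $N_i$ to the quasi-positive-clique $Q_i$, using Lemma \ref{hsg sw} to keep eigenvalues fixed. If some $x$ has $c$ non-positive-neighbors in $Q_i$, take $\mathfrak{h}$ on one fat vertex $F_i$, all of them positively adjacent after switching, together with $x$ and those $c$ vertices $y_j$. The special matrix is then $A_s-J$. Restricting to the star at $x$ and computing the $2\times2$ quotient on $\{x\}$ and $\{y_j\}$ (entries $-1$ on the diagonal, with off-diagonal entries $-1$ or $-2$ and with $y_j$ pairs contributing at most $0$ after subtracting $J$) should give smallest eigenvalue roughly $-1-\sqrt{c}$ or lower. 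The condition $-1-\sqrt c<\lambda$ is $c>(\lambda+1)^2=\lambda^2+2\lambda+1$, which gives the plex parameter $\lfloor\lambda^2+2\lambda+2\rfloor$.

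The delicate point, and the main obstacle, is controlling the edges among the $y_j$. Here we first use Ramsey inside the bounded set of $y_j$ (after taking $d$ and $n$ large, we may first bound $c$ by $\kappa_4$) to find a homogeneous subset, then handle each homogeneous type separately. We also need to check that the $\widetilde K^{(-)}$-type configurations cannot occur. This case analysis, which must match the exact constant, is where we expect the proof to be hardest.

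For (iii), suppose $|V(N_i)\cap V(N_j)|\ge 4\lfloor-\lambda\rfloor-3$. Switching classes split the intersection into four sign patterns relative to $(F_i,F_j)$, so by pigeonhole some pattern contains $k\ge\lfloor-\lambda\rfloor$ vertices. After switching, these become $k$ slim vertices each adjacent to both fat vertices with the same signs. The special matrix is then $A_s-2J$, up to sign, on $k$ vertices with $A_s\le J-I$ entrywise in magnitude. Testing with the all-ones vector gives Rayleigh quotient at most $(k-1)-2k=-k-1<\lambda$, since $k+1>-\lambda$. This is the contradiction we need, and we invoke Proposition \ref{associatedProp} as before. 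Since all the Hoffman graphs used have bounded size in terms of $\lambda$, a single choice of $q_\lambda$ works for all three cases.
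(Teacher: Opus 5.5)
Your handling of (i), (iii) and (iv) is fine and essentially matches the paper. The obstructions are the same Hoffman signed graphs: one slim vertex on $\lfloor-\lambda\rfloor+1$ fat vertices, and $\lfloor-\lambda\rfloor$ slim vertices with a common sign pattern on two fat vertices. Your all-ones test vector there is the paper's Perron--Frobenius comparison with $J+I$. These obstructions are lifted to $(G,\sigma)$ via Proposition \ref{associatedProp} and Theorem \ref{Hoffman-Ostrowski Theorem of signed case}. Getting (iv) and the ``at least one'' part of (i) from Theorem \ref{structure with forbidden graphs}(iv), with $m_\lambda=t(\lambda)$, $s=t+1$ and $d=R(n,t+1,t)$, is a valid shortcut for the paper's direct Ramsey argument.

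The gap is in (ii). You stop at ``should give roughly $-1-\sqrt c$'', call the edges among the $y_j$ the main obstacle, and plan to pass by Ramsey to a homogeneous subset and then do a case analysis. That route cannot give the sharp constant $\lfloor\lambda^2+2\lambda+2\rfloor$, because a homogeneous subset has far fewer than $c$ vertices. For example, if the surviving $c'$ vertices are non-adjacent to $x$ and pairwise positively adjacent in $Q_i$, the special matrix is exactly $-I-A(K_{1,c'},+)$. Its smallest eigenvalue is $-1-\sqrt{c'}$, which is not below $\lambda$ once $c'\le(\lambda+1)^2$. The proposed check on $\widetilde K^{(-)}$-type configurations is also beside the point.

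The missing observation is the one in your own parenthesis, which you did not exploit. On $\{x,y_1,\dots,y_c\}$ every entry of $S=A_s-J$ is $\le 0$: the diagonal is $-1$, $S_{xy_j}\in\{-1,-2\}$ and $S_{y_jy_k}\in\{0,-1,-2\}$. Hence $-S$ is nonnegative and entrywise at least $I+A(K_{1,c},+)$, and Perron--Frobenius monotonicity gives $\lambda_{\min}\le -1-\sqrt c$ regardless of the edges among the $y_j$. Concretely, for the nonnegative vector $v=(\sqrt c,1,\dots,1)$ one has $v^TSv\le -2c(1+\sqrt c)$ and $v^Tv=2c$. With $c=\lfloor(\lambda+1)^2\rfloor+1>(\lambda+1)^2$ this gives $\lambda_{\min}(\mathfrak h)<\lambda$, which is exactly the paper's family $\Sigma_1$. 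No Ramsey step, preliminary bound $c\le\kappa_4$, or case analysis is needed.
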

\begin{proof}
Considering $\lim\limits_{m\rightarrow +\infty}\lambda_{\min}(\widetilde{K}_{2m}^{(0)})=\lim\limits_{m\rightarrow +\infty}\lambda_{\min}(\widetilde{K}_{2m}^{(-)})=-\infty$, let $m_{\lambda}=\min\{m\mid \lambda_{\min}(\widetilde{K}_{2m}^{(0)})<\lambda$ and $\lambda_{\min}(\widetilde{K}_{2m}^{(-)})<\lambda\}$. To obtain $q_{\lambda}$, we need to introduce a few families of Hoffman signed graphs with smallest eigenvalue less than $\lambda$. \par
Let $\mathfrak{h}_0$ be the fat Hoffman signed graph with $|V_s(\mathfrak{h}_0)|=1$ and $|V_f(\mathfrak{h}_0)|=\lfloor-\lambda\rfloor+1$, such that the unique slim vertex is positively adjacent to all the fat vertices of $\mathfrak{h}_0$. Let $\mathfrak{H}_1$ be the set of fat Hoffman signed graphs with exactly $\lfloor(\lambda+1)^2\rfloor+1$ slim vertices and one fat vertex, such that the unique fat vertex is positively adjacent to all slim vertices and there exists at least one slim vertex which is non-positively adjacent to all of the other slim vertices. Let $\mathfrak{H}_2$ be the set of fat Hoffman signed graphs with exactly $\lfloor-\lambda\rfloor$ slim vertices and two fat vertices, such that each fat vertex is positively adjacent to all of the slim vertices. Let $\Sigma_0=\{\mathfrak{h}\mid \mathfrak{h}$ is switching equivalent to $\mathfrak{h}_0\}$, $\Sigma_i=\{\mathfrak{h}\mid \mathfrak{h}$ is switching equivalent to some member in $\mathfrak{H}_i\}$ for $i=1,2$.
Now we show that for any $\mathfrak{h}\in\Sigma_0\cup\Sigma_1\cup\Sigma_2$, $\lambda_{\min}(\mathfrak{h})<\lambda$ holds. For any $\mathfrak{h}\in\Sigma_0$, we have $\lambda_{\min}(\mathfrak{h})=\lambda_{\min}(\mathfrak{h}_0)\leq-(\lfloor-\lambda\rfloor+1)<\lambda$, as $\mathfrak{h}$ and $\mathfrak{h}_0$ are switching equivalent, by Lemma \ref{hsg sw}.
For any $\mathfrak{h}\in\Sigma_1$, assume that $\mathfrak{h}$ is switching equivalent to $\mathfrak{h}_1\in\mathfrak{H}_1$, where $x$ is a slim vertex of $\mathfrak{h}_1$ which is non-positively adjacent to all of the other slim vertices. It follows that 
$$
S(\mathfrak{h}_1)_{xy}=\left\{\begin{array}{ll}
-1, &\text{ if $x$ and $y$ are non-adjacent,}\\-2,&\text{ otherwise,}
\end{array}\right.$$
for $y\in V_s(\mathfrak{h}_1)$. Note that $S(\mathfrak{h}_1)_{yy}=-1$ and $S(\mathfrak{h}_1)_{yy'}\leq0$ for any $y,y'\in V_s(\mathfrak{h}_1)$, as $\mathfrak{h}_1$ has a unique fat vertex positively adjacent to all the slim vertices of $\mathfrak{h}_1$. By Perron-Frobenius Theorem, we have $\lambda_{\max}(-S(\mathfrak{h}_1))\geq\lambda_{\max}(A(K_{1,\lfloor(\lambda+1)^2\rfloor+1},+)+\mathbf{I})=\sqrt{\lfloor(\lambda+1)^2\rfloor+1}+1> -\lambda$. Thus, by Lemma \ref{hsg sw}, $\lambda_{\min}(\mathfrak{h})=\lambda_{\min}(\mathfrak{h}_1)=-\lambda_{\max}(-S(\mathfrak{h}_1))<\lambda$.
For any $\mathfrak{h}\in\Sigma_2$, assume that $\mathfrak{h}$ is switching equivalent to $\mathfrak{h}_2\in\mathfrak{H}_2$. Since $\mathfrak{h}_2$ has exactly two fat vertices and each slim vertex is positively adjacent to both of the fat vertices, the special matrix $S(\mathfrak{h}_2)$ satisfies that 
$$
S(\mathfrak{h}_2)_{xy}=\left\{\begin{array}{ll}
-2, &\text{ if $x=y$,}\\
-1,&\text{ if $x$ and $y$ are positively adjacent,}\\
-2,&\text{ if $x$ and $y$ are non-adjacent,}\\
-3,&\text{ if $x$ and $y$ are negatively adjacent.}
\end{array}\right.$$
Applying the Perron-Frobenius Theorem again yields $\lambda_{\max}(-S(\mathfrak{h}_2))\geq\lambda_{\max}(A(K_{\lfloor-\lambda\rfloor},+)+2\mathbf{I})\geq \lfloor-\lambda\rfloor+1>-\lambda$, then by Lemma \ref{hsg sw} again, 
$\lambda_{\min}(\mathfrak{h})=\lambda_{\min}(\mathfrak{h}_2)=-\lambda_{\max}(-S(\mathfrak{h}_2))<\lambda$. 
By Theorem \ref{Hoffman-Ostrowski Theorem of signed case}, for any $\mathfrak{h}\in \Sigma_0\cup\Sigma_1\cup\Sigma_2$, there exists a positive integer $p_{\mathfrak{h}}$ such that $\lambda_{\min}(G(\mathfrak{h},p_{\mathfrak{h}}))<\lambda$. Let  $v_f=\max\{|V_f(\mathfrak{h})|\mid \mathfrak{h}\in \Sigma_0\cup\Sigma_1\cup\Sigma_2\}$, $v_s=\max\{|V_s(\mathfrak{h})|\mid \mathfrak{h}\in \Sigma_0\cup\Sigma_1\cup\Sigma_2\}$ and $p=\max\{p_{\mathfrak{h}}\mid \mathfrak{h}\in \Sigma_0\cup\Sigma_1\cup\Sigma_2\} $. Let $q=q(m_{\lambda},v_f,v_s,p)$ be the integer as defined in Proposition \ref{associatedProp}. Let $q_{\lambda}=\max\{2(m_{\lambda}^2+m_{\lambda}),(2m_{\lambda}-1)t+1,q\}$. \par

For any integer $n\geq q_{\lambda}$, let $d(\lambda,n)=R(n,2-\lfloor\lambda\rfloor,\lfloor\lambda-1\rfloor^2)$. Let $\mathfrak{g}:=\mathfrak{g}(G,\sigma,m_{\lambda},n)$ be an associated Hoffman graph of $(G,\sigma)$ with respect to the pair $(m_{\lambda},n)$ and $N_1,N_2,\ldots,N_r$ be the subgraphs induced on the neighbors of each fat vertex of $\mathfrak{g}$, where $r=|V_f(\mathfrak{g})|$. Since $\lambda_{\min}(G,\sigma)\geq \lambda$, $(G,\sigma)$ is $\{\widetilde{K}_{2m_{\lambda}}^{(0)},\widetilde{K}_{2m_{\lambda}}^{(-)}\}$-switching-free and $\max\{\lambda_{\min}(\widetilde{K}_{2m_{\lambda}}^{(0)}),\lambda_{\min}(\widetilde{K}_{2m_{\lambda}}^{(-)})\}<\lambda$ and $\lambda_{\min}(G(\mathfrak{h},p))\leq\lambda_{\min}(G(\mathfrak{h},p_{\mathfrak{h}}))<\lambda$, then $(G,\sigma)$ does not contain $G(\mathfrak{h},p)$ as an induced subgraph for any $\mathfrak{h}\in \Sigma_0\cup\Sigma_1\cup\Sigma_2$. 
Thus, $\mathfrak{g}$ does not contain any Hoffman signed graph in $\Sigma_0\cup\Sigma_1\cup\Sigma_2$ as an induced subgraph, by Proposition \ref{associatedProp}.  \par

$\rm{(i)}$ For an arbitrary vertex $z_0$ of $(G,\sigma)$, let $(H_1,\sigma_{H_1})$ denote the subgraph of $(G,\sigma)$ induced on the neighbors of $z_0$ in $(G,\sigma)$. Note that $(G,\sigma)$ is $\{(K_{2-\lfloor\lambda\rfloor},-),(K_{1,\lfloor\lambda-1\rfloor^2},+)\}$-switching-free, as the smallest eigenvalues of $(K_{2-\lfloor\lambda\rfloor},-)$ and $(K_{1,\lfloor\lambda-1\rfloor^2},+)$ are less than $\lambda$. Since the valency of $z_0$ is at least $R(n,2-\lfloor\lambda\rfloor,\lfloor\lambda-1\rfloor^2)$, by Theorem \ref{Ramseynumber}, the subgraph $(H_1,\sigma_{H_1})$ contains a $(K_n,+)$. 
Notice that this $(K_n,+)$ is contained in some maximal positive clique $C\in\mathcal{C}(n)$ in $(G,\sigma)$. Since $\{N_{(m)}(V(C_1)),\ldots,N_{(m)}(V(C_r))\}=\{N_{(m)}(V(C))\mid C\in \mathcal{C}(n)\}$, there exists $i$ such that $z_0\in N_{(m)}(V(C))=V(N_i)$. Suppose that $z_0$ is contained in at least $\lfloor -\lambda\rfloor+1$ $N_i$'s, then $z_0$ has at least $\lfloor -\lambda\rfloor+1$ fat neighbors in $\mathfrak{g}$. Thus, the subgraph induced on $z_0$ and its $\lfloor -\lambda\rfloor+1$ fat neighbors is contained in $\Sigma_0$, which gives a contradiction. This shows that each vertex of $(G,\sigma)$ is contained in at most $\lfloor -\lambda\rfloor$ $N_i$'s.\par

$\rm{(ii)}$ Let $Q_i=Q(C_i)$ be the quasi-positive-clique obtained from $N_i$ by switching on $N_{(m)}^{(-)}(V(C_i))$ and $F_i$ be the corresponding fat vertex of $N_i$. Suppose that there exists a vertex $z_1$ in $Q_i$ such that $z_1$ has at least $\lfloor(\lambda+1)^2\rfloor+1$ non-positive neighbors in $Q_i$. Let $U$ be a set of non-positive neighbors of $z_1$ in $Q_i$ such that $|U|=\lfloor(\lambda+1)^2\rfloor+1$. Thus, the subgraph of $\mathfrak{g}$ induced on $\{z_1,F_i\}\cup U$ is contained in $\Sigma_1$. This contradiction proves $\rm{(ii)}$.\par

$\rm{(iii)}$ Suppose that there exists a pair $i\neq j\in \{1,\ldots,r\}$ such that $|V(N_i)\cap V(N_j)|\geq 4\lfloor-\lambda\rfloor-3$, then there exists a pair of $\varepsilon_i,\varepsilon_j\in\{+,-\}$ such that $|N_{(m)}^{(\varepsilon_i)}(V(C_i))\cap N_{(m)}^{(\varepsilon_j)}(V(C_j))|\geq \lceil\frac{4\lfloor-\lambda\rfloor-3}{4}\rceil=\lfloor-\lambda\rfloor$. Note that the subgraph of $\mathfrak{g}$ induced on $(N_{(m)}^{(\varepsilon_i)}(V(C_i))\cap N_{(m)}^{(\varepsilon_j)}(V(C_j))\cup \{F_i,F_j\}$ is contained in $\Sigma_2$, which gives a contradiction. Thus, there are at most $4\lfloor-\lambda\rfloor-4$ common vertices in both $N_i$ and $N_j$, for any $1\leq i< j\leq r$.\par

$\rm{(iv)}$ Suppose that there exists a vertex $z_2$ in $(G',\sigma')$ with valency at least $d(\lambda,n)\geq R(n,2-\lfloor\lambda\rfloor,\lfloor\lambda-1\rfloor^2)$. Let $(H_2,\sigma_{H_2})$ denote the subgraph of $(G,\sigma)$ induced on the neighbors of $z_2$ in $(G',\sigma')$. Since $(G,\sigma)$ is $\{(K_{2-\lfloor\lambda\rfloor},-),(K_{1,\lfloor\lambda-1\rfloor^2},+)\}$-switching-free, the subgraph $(H_2,\sigma_{H_2})$ contains a $(K_n,+)$, by Theorem \ref{Ramseynumber}. Notice that this $(K_n,+)$ is contained in some maximal positive clique $C'\in\mathcal{C}(n)$ in $(G,\sigma)$. Since $\{N_{(m)}(V(C_1)),\ldots,N_{(m)}(V(C_r))\}=\{N_{(m)}(V(C))\mid C\in \mathcal{C}(n)\}$, there exists $j$ such that $N_{(m)}(C')=N_{(m)}(C_j)$. Note that $z_2$ is adjacent to each vertex in this $(K_n,+)$. Thus, by Lemma \ref{neighborhood of 3m-2-clique signed case}, $z_2\in N_{(m)}(C')=N_{(m)}(C_j)$, and the edges between $z_2$ and this $(K_n,+)$ are in $N_j$, not in $(G',\sigma')$. This gives a contradiction and $\rm{(iv)}$ holds.\par

This completes the proof.
\end{proof}

Now we prove Theorem \ref{structure theory thm}.\par
\noindent
\textbf{Proof of Theorem \ref{structure theory thm}}. 
Note that for $\lambda<-1$, Theorem \ref{structure theory thm} is a consequence of Theorem \ref{structure th for fixed min ev}.\par
Now we may assume that $\lambda=-1$. 
Note that $\lambda_{\min}(K_3,-)=-2$ and $\lambda_{\min}(K_{1,2},+)=-\sqrt{2}$, then $(G,\sigma)$ is $\{(K_3,-),(K_{1,2},+)\}$-switching-free. This means that $(G,\sigma)$ is switching equivalent to a signed graph where each component is a positive clique. 
Let $d_{\lambda}=1$ and $N_1,\ldots,N_r$ be all the distinct components of $(G,\sigma)$, where $r$ is a positive integer. Note that $N_i$ is switching equivalent to a positive clique $(K_{|N_i|},+)$ for $i=1,\ldots,r$. Hence, the four statements hold.\par
This completes the proof. \qed

\section{\texorpdfstring{Signed graphs with smallest eigenvalue greater than $-1-\sqrt{2}$}{Signed graphs with smallest eigenvalue greater than -1-sqrt{2}}}\label{sec of -1-sqrt2}

In this section, we focus on signed graphs with smallest eigenvalue greater than $-1-\sqrt{2}$ and give a proof of Theorem \ref{thm for min ev >-1-sqrt{2}}.\par

Let $\lambda\leq-1$ be a real number, a \emph{minimal forbidden fat Hoffman signed graph} $\mathfrak{f}$ for $\lambda$ is a fat Hoffman signed graph with smallest eigenvalue less than $\lambda$ such that any proper fat induced Hoffman signed subgraph of $\mathfrak{f}$ has smallest eigenvalue at least $\lambda$. Denote by $\mathcal{F}(\lambda)$ the set of pairwise non-isomorphic minimal forbidden fat Hoffman signed graphs for $\lambda$.\par
We will start with a result on minimal forbidden fat Hoffman signed graphs, which will be used later in the proof of Theorem \ref{thm for min ev >-1-sqrt{2}}. To prove it, we need the following lemma, which was shown by Gavrilyuk et al. \cite{Gavrilyuk2021signed}.

\begin{lemma}[{\cite[Proposition $4.7$]{Gavrilyuk2021signed}}]\label{GMST -sqrt{2}}
Each connected signed graph with smallest eigenvalue greater than $-\sqrt{2}$ is switching equivalent to a positive clique.
\end{lemma}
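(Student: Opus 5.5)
The plan is to use forbidden induced subgraphs together with the interlacing theorem \cite[Theorem $9.9.1$]{godsil2013}, in the same spirit as the proof of Theorem \ref{structure theory thm} for $\lambda=-1$. Let $(G,\sigma)$ be connected with $\lambda_{\min}(G,\sigma)>-\sqrt{2}$. By interlacing, every induced subgraph of $(G,\sigma)$ also has smallest eigenvalue greater than $-\sqrt{2}$. Switching preserves the spectrum, since the switched adjacency matrix is $D^{-1}AD$ with $D$ a $\pm1$ diagonal matrix, as in the proof of Lemma \ref{hsg sw}. Hence $(G,\sigma)$ contains no induced subgraph switching equivalent to a signed graph with smallest eigenvalue at most $-\sqrt{2}$.

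\textbf{Step 1: the underlying graph is complete.} The signed graph $(K_{1,2},+)$ has eigenvalues $\sqrt{2},0,-\sqrt{2}$, so its smallest eigenvalue is $-\sqrt{2}$. Every signing of the path on three vertices is switching equivalent to $(K_{1,2},+)$, because the path is a tree and each edge sign can be fixed one at a time by switching on a single leaf. Thus the underlying graph $G$ contains no induced path on three vertices. A connected graph with no induced $P_3$ is complete, so $G=K_n$.

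\textbf{Step 2: every triangle is balanced.} Call a triangle unbalanced if it has an odd number of negative edges. Any unbalanced triangle is switching equivalent to $(K_3,-)$, which has eigenvalues $1,1,-2$. Since $-2<-\sqrt{2}$, interlacing excludes unbalanced triangles, so every triangle of $(G,\sigma)$ has an even number of negative edges.

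\textbf{Step 3: switch to a positive clique.} Fix a vertex $v$ and switch with respect to the set of negative neighbours of $v$. Switching preserves the parity of negative edges on each triangle, so all triangles remain balanced. After this switching, every edge at $v$ is positive. For any two other vertices $x,y$, the triangle $\{v,x,y\}$ is balanced and its edges $vx$ and $vy$ are positive, so $xy$ is positive. Hence the switched signed graph is $(K_n,+)$. For $n\le 2$ the statement is immediate.

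There is no real obstacle here. The only point needing care is Step 1: one must check that every signing of $P_3$, not only the all-positive one, is excluded, and the tree argument handles this.
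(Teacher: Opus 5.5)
Your proof is correct. It uses interlacing and the switching-invariance of the spectrum to forbid $(K_{1,2},+)$ and $(K_3,-)$ up to switching, which gives a complete underlying graph with all triangles balanced; switching at the negative neighbours of one vertex then yields $(K_n,+)$. The paper only cites this result and gives no proof of it, but it relies on exactly this forbidden-subgraph reasoning in the case $\lambda=-1$ of the proof of Theorem \ref{structure theory thm} and in the proof of Theorem \ref{min forbidden finite for -2}, so your argument is essentially the paper's approach with the details written out.
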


The following theorem shows that the set $\mathcal{F}(-2)$ is finite. 
 
\begin{theorem}\label{min forbidden finite for -2}
Let $\mathfrak{f}\in\mathcal{F}(-2)$ and $S(\mathfrak{f})$ be the special matrix of $\mathfrak{f}$.
\begin{enumerate}[label=\rm{(\roman*)}]
    \item If $\mathfrak{f}$ contains a slim vertex with at least $2$ fat neighbors, then  $S(\mathfrak{f})$ is
$$(-3)\text{ or }\begin{pmatrix} -2 & a_2 \\ a_2 & a_1 \end{pmatrix} , $$
where $a_1,a_2$ are integers such that $a_1\in\{-1,-2\}$ and $1\leq|a_2|\leq 1-a_1$.
\item If each slim vertex of $\mathfrak{f}$ has exactly one fat neighbor, then $S(\mathfrak{f})$ is one of the following.
$$\begin{pmatrix} -1 & 2 \\ 2 & -1 \end{pmatrix} ,\begin{pmatrix} -1 & -2 \\ -2 & -1 \end{pmatrix},\begin{pmatrix} -1 & -1&1 \\ -1 & -1&1\\1&1&-1 \end{pmatrix},\begin{pmatrix} -1 & -1&-1 \\ -1 & -1&-1\\-1&-1&-1 \end{pmatrix} \text{ and }$$
$$\begin{pmatrix} -1 & 1&1 \\ 1 & -1&0\\1&0&-1 \end{pmatrix},\begin{pmatrix} -1 & -1&1 \\ -1 & -1&0\\1&0&-1 \end{pmatrix} ,\begin{pmatrix} -1 & -1&-1 \\ -1 & -1&0\\-1&0&-1\end{pmatrix}. $$
\end{enumerate}
In particular, $\lambda_{\min}(\mathfrak{f})\leq-1-\sqrt{2}$ and the set $\mathcal{F}(-2)$ is finite.
\end{theorem}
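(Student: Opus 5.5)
The plan is to exploit minimality directly. If $\mathfrak{f}\in\mathcal{F}(-2)$, then every proper fat induced Hoffman signed subgraph has $\lambda_{\min}\geq -2$, while $\lambda_{\min}(\mathfrak{f})<-2$. Since $\mathfrak{f}$ is fat, deleting a slim vertex together with any fat vertices that become isolated again gives a fat Hoffman signed graph. Deleting a fat vertex that is the unique fat neighbor of some slim vertex does not. Hence the fat vertices are irrelevant except through the special matrix. Removing a set $X$ of slim vertices (and any fat vertices left without neighbors) yields a fat Hoffman signed graph whose special matrix is the principal submatrix of $S(\mathfrak{f})$ on the remaining slim vertices. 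Minimality therefore says exactly: every proper principal submatrix of $S=S(\mathfrak{f})$ has smallest eigenvalue $\geq -2$, while $S$ itself has smallest eigenvalue $<-2$. By Lemma \ref{hsg sw} we may switch freely, which changes $S$ by a diagonal $\pm1$ similarity.

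First I bound the entries of $S$. For a slim vertex $x$ with $k$ fat neighbors, $S_{xx}=-k$, so the $1\times1$ principal submatrix $(-k)$ has eigenvalue $-k$. If $k\geq3$, then $\mathfrak{f}$ must equal this single-vertex graph, giving $S=(-3)$ (any fewer fat neighbors would not go below $-2$, any more would not be minimal). Thus all other cases have every diagonal entry in $\{-1,-2\}$. For two slim vertices $x,y$, the $2\times2$ submatrix $\begin{pmatrix}S_{xx}&S_{xy}\\S_{xy}&S_{yy}\end{pmatrix}$ has smallest eigenvalue $\geq -2$ unless it is itself $\mathfrak{f}$. The condition $\lambda_{\min}\ge -2$ on $\begin{pmatrix}a&b\\b&c\end{pmatrix}$ is $(a+2)(c+2)\geq b^2$. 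This forces $S_{xy}=0$ whenever one of the diagonal entries is $-2$, and $|S_{xy}|\leq1$ when both are $-1$. In case (i), some diagonal entry is $-2$. Such a vertex is then orthogonal to everything in any proper subgraph, so if $|V_s|\ge 3$ the vertex with $-2$ is isolated in $S$ restricted to proper pieces. Taking the principal submatrix on $x$ and one vertex $y$ with $S_{xy}\neq0$ (which exists, else $x$ splits off as a direct summand and minimality fails), this $2\times2$ matrix already has $\lambda_{\min}<-2$, so $\mathfrak{f}$ is it. The listed constraints $1\le|a_2|\le 1-a_1$ then come from the facts that $|S_{xy}|\le$ (number of common fat neighbors)$+1\le 1-a_1$ and that $S_{xy}\ne 0$ is required.

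In case (ii), all diagonal entries are $-1$. Then every proper principal submatrix $T$ satisfies $T+2I\succeq0$ with unit diagonal and $|$off-diagonal$|\le 1$ on pairs inside proper subsets. If $|V_s|\ge3$, all entries lie in $\{0,\pm1\}$, so $S+I$ is (up to sign) the adjacency matrix of a signed graph $\Gamma$ with $S=A(\Gamma)-I$. Minimality translates to: every proper induced subgraph has $\lambda_{\min}\ge-1$, but $\Gamma$ has $\lambda_{\min}<-1$. Connected signed graphs with $\lambda_{\min}\ge -1$ are switching-equivalent to positive cliques (by Lemma \ref{GMST -sqrt{2}} and interlacing). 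So $\Gamma$ is a minimal signed graph that is not locally a switched union of positive cliques, i.e. it is minimal among those containing an induced switching copy of $(K_3,-)$ or $(K_{1,2},+)$. This gives exactly the $3$-vertex cases listed: the triangle with odd number of negative edges up to switching, and the path $K_{1,2}$ up to switching, with the sign pattern of $S=A-I$. If $|V_s|=2$, the entry is $\pm2$.

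The main obstacle is the case analysis showing that a minimal $\Gamma$ has at most $3$ vertices. I would argue that the minimal signed graphs with $\lambda_{\min}<-1$ are exactly $(K_{1,2},+)$ and $(K_3,-)$ up to switching, using the characterization above. Finally, the eigenvalues of each listed matrix are computed directly, each being $\le -1-\sqrt2$ (e.g. $-3$; $-1-\sqrt2$ for the path). Finiteness then follows since each $S$ determines $\mathfrak{f}$ up to finitely many choices of fat neighborhoods, the number of fat vertices being bounded by the diagonal entries.
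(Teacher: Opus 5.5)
Your proposal is correct and follows essentially the paper's route. Minimality is used through principal submatrices of $S(\mathfrak{f})$. In (i), a diagonal entry $-2$ (a zero diagonal entry of $S+2\mathbf{I}$) forces the $2\times 2$ case. In (ii), you write $S=A(\Gamma)-\mathbf{I}$ and use Lemma~\ref{GMST -sqrt{2}} to force $\Gamma$ to be a switched $(K_3,-)$ or $(K_{1,2},+)$. You actually justify part (i), which the paper only asserts.

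The step you flag as the main obstacle is exactly the fact the paper uses without proof, and it is short. If the underlying graph is connected but not complete, it contains an induced path on three vertices, which is switching equivalent to $(K_{1,2},+)$. If it is a complete signed graph not switching equivalent to a positive clique, switch so that all edges at one vertex are positive; it then contains a triangle with exactly one negative edge, which is switching equivalent to $(K_3,-)$.
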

\begin{proof}
$\rm{(i)}$ Assume that $\mathfrak{f}$ contains a slim vertex with at least $2$ fat neighbors. Since $\mathfrak{f}\in\mathcal{F}(-2)$, we have the special matrix $S(\mathfrak{f})$ is
$$(-3)\text{ or }\begin{pmatrix} -2 & a_2 \\ a_2 & a_1 \end{pmatrix} , $$
where $a_1,a_2$ are integers such that $a_1\in\{-1,-2\}$ and $1\leq|a_2|\leq 1-a_1$. By direct computation, we obtain that $\lambda_{\min}(\mathfrak{f})<-1-\sqrt{2}$.\par
$\rm{(ii)}$ Note that all of the diagonal entries of $S(\mathfrak{f})$ are $-1$, as each slim vertex of $\mathfrak{f}$ has exactly one fat neighbor. If $S(\mathfrak{f})$ contains an entry with absolute value at least $2$, then $$S(\mathfrak{f})=\begin{pmatrix} -1 & b \\ b & -1 \end{pmatrix},$$
where $b\in\{-2,2\}$ and $\lambda_{\min}(\mathfrak{f})=-3$. 
If all the entries of $S(\mathfrak{f})$ have absolute value at most $1$, then there exists a signed graph $(H,\tau)$ with adjacency matrix $A(H,\tau)=S(\mathfrak{f})+\mathbf{I}$. Since the smallest eigenvalue of $(H,\tau)$ satisfies that $\lambda_{\min}(H,\tau)=\lambda_{\min}(\mathfrak{f})+1<-1$, then we obtain that $\lambda_{\min}(H,\tau)\leq-\sqrt{2}$, by Lemma \ref{GMST -sqrt{2}}. Note that for any signed graph with smallest eigenvalue at most $-\sqrt{2}$, it contains an induced subgraph switching equivalent to either $(K_3,-)$ or $(K_{1,2},+)$. Thus, $(H,\tau)$ is switching equivalent to either $(K_3,-)$ or $(K_{1,2},+)$ and the special matrix $S(\mathfrak{f})=A(H,\tau)-\mathbf{I}$. By direct computation, $\lambda_{\min}(\mathfrak{f})\leq \lambda_{\min}(H,\tau)-1\leq-1-\sqrt{2}$.\par

Since for any $\mathfrak{f}\in \mathcal{F}(-2)$, $\mathfrak{f}$ has at most $3$ slim vertices and at most $4$ fat vertices, which implies that $\mathcal{F}(-2)$ is finite. This completes the proof.
\end{proof}

\begin{corollary}\label{-2 to -1-sqrt2 for fhsg}
    Let $\mathfrak{h}$ be a fat Hoffman signed graph with smallest eigenvalue $\lambda_{\min}(\mathfrak{h})$. If $\lambda_{\min}(\mathfrak{h})<-2$, then $\lambda_{\min}(\mathfrak{h})\leq -1-\sqrt{2}$.
\end{corollary}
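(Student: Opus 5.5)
The plan is to reduce the statement to Theorem \ref{min forbidden finite for -2} by locating a minimal forbidden fat subgraph inside $\mathfrak{h}$. Suppose $\mathfrak{h}$ is fat with $\lambda_{\min}(\mathfrak{h})<-2$. Among all fat induced Hoffman signed subgraphs of $\mathfrak{h}$ whose smallest eigenvalue is less than $-2$, choose one, say $\mathfrak{f}$, with the minimum number of vertices. Such a subgraph exists because $\mathfrak{h}$ itself is a candidate and the family is finite.

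By minimality, every proper fat induced Hoffman signed subgraph of $\mathfrak{f}$ has smallest eigenvalue at least $-2$. This needs one small check: a proper fat induced subgraph of $\mathfrak{f}$ is also a fat induced subgraph of $\mathfrak{h}$, since inducedness is transitive and the labelling is restricted. Hence $\mathfrak{f}$ is isomorphic to a member of $\mathcal{F}(-2)$.

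Theorem \ref{min forbidden finite for -2} then gives $\lambda_{\min}(\mathfrak{f})\le -1-\sqrt{2}$.

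Next, compare $\mathfrak{f}$ with $\mathfrak{h}$ using Lemma \ref{sub-minev}. Since $\mathfrak{f}$ is an induced Hoffman signed subgraph of $\mathfrak{h}$, we get $\lambda_{\min}(\mathfrak{h})\le\lambda_{\min}(\mathfrak{f})\le-1-\sqrt{2}$, which is the claim.

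The only point needing care is that the notion of "induced subgraph" in the definition of $\mathcal{F}(-2)$ matches the one used in Lemma \ref{sub-minev}. In particular, we should confirm that deleting vertices while keeping fatness does not produce a fat vertex with no neighbours. If that happened the object would not be a Hoffman signed graph, but such an isolated fat vertex contributes nothing to the special matrix and can simply be discarded, so this does not affect the argument.

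There is no real obstacle beyond this bookkeeping. The substance of the result lies in the case analysis already carried out in Theorem \ref{min forbidden finite for -2}.
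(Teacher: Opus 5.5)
Your proposal is correct and is essentially the paper's argument. The paper's proof says only that the corollary follows immediately from Theorem~\ref{min forbidden finite for -2}. Your steps spell out that implicit reasoning: take a vertex-minimal fat induced subgraph $\mathfrak{f}\in\mathcal{F}(-2)$ of $\mathfrak{h}$, apply the bound $\lambda_{\min}(\mathfrak{f})\le -1-\sqrt{2}$, and use Lemma~\ref{sub-minev} to obtain $\lambda_{\min}(\mathfrak{h})\le\lambda_{\min}(\mathfrak{f})$.
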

\begin{proof}
    This follows immediately from Theorem \ref{min forbidden finite for -2}.
\end{proof}

Now we prove Theorem \ref{thm for min ev >-1-sqrt{2}}.

\noindent
\textbf{Proof of Theorem \ref{thm for min ev >-1-sqrt{2}}}. For $\lambda\in(-2,-1]$, let $d_{\lambda}'= f(\lambda)$, where $f(\lambda)$ is as defined in Theorem \ref{GMST result on -2}, then $(G,\sigma)$ is switching equivalent to a positive clique and $\lambda_{\min}(G,\sigma)=-1$. Assume that $(G,\sigma)$ is obtained from $(K_{|V(G)|},+)$ by switching on $U$ for some $U\in V(G)$. Define the diagonal matrix $D$ as
$$D_{xx}=\left\{\begin{array}{ll}
    -1, & \text{if } x\in U \\
    1,  & \text{otherwise},
  \end{array}\right.$$
then $D=D^{-1}=D^T$.
Note that $A(G,\sigma)=D(A(K_{|V(G)|},+))D=D(\mathbf{J}-\mathbf{I})D$, where $\mathbf{J}$ is the all-ones matrix.
Let $N$ be the matrix with order $|V(G)|$ and with $1$ on the first row and $0$ on the rest, then $\mathbf{J}=N^TN$. Thus, $A(G,\sigma)+\mathbf{I}=D\mathbf{J}D=DN^TND=(ND)^TND$, and $(G,\sigma)$ is $1$-integrable.\par

Now we may assume that $-1-\sqrt{2}<\lambda\leq-2$ and $\lambda_{\min}(G,\sigma)\leq -2$. To obtain $d_{\lambda}'$, we need to look at the set $\mathcal{F}(-2)$. 
For any $\mathfrak{f}\in \mathcal{F}(-2)$, $\lambda_{\min}(\mathfrak{f})\leq -1-\sqrt{2}$, by Corollary \ref{-2 to -1-sqrt2 for fhsg}. Thus, there exists a positive integer $p_{\mathfrak{f}}$ such that $\lambda_{\min}(G(\mathfrak{f},p_{\mathfrak{f}}))<\lambda$, by Theorem \ref{Hoffman-Ostrowski Theorem of signed case}. Since $\mathcal{F}(-2)$ is finite, we may denote $v_f=\max\{|V_f(\mathfrak{f})\mid \mathfrak{f}\in\mathcal{F}(-2)\}$, $v_s=\max\{|V_s(\mathfrak{f})\mid \mathfrak{f}\in\mathcal{F}(-2)\}$ and $p=\max\{p_{\mathfrak{f}}\mid \mathfrak{f}\in\mathcal{F}(-2)\}$. 
Let $m_{\lambda}=\min\{m\mid\lambda_{\min}(\widetilde{K}_{2m}^{(0)})<\lambda,\lambda_{\min}(\widetilde{K}_{2m}^{(-)})<\lambda\}$ and $q=q(m_{\lambda},v_f,v_s,p)$ be the integer as defined in Proposition \ref{associatedProp}.\par
Let $n=\max\{q,q_{\lambda}\}$ and $d_{\lambda}'=\max\{d(\lambda,n),120\}$, where $q_{\lambda}$ and $d(\lambda,n)$ are as defined in Theorem \ref{structure th for fixed min ev}. Let $\mathfrak{g}=\mathfrak{g}(G,\sigma,m_{\lambda},n)$ be an associated Hoffman signed graph of $(G,\sigma)$ with respect to the pair $(m_{\lambda},n)$. 
Since $\lambda_{\min}(G,\sigma)\geq \lambda$, $\lambda_{\min}(\widetilde{K}_{2m_{\lambda}}^{(0)})<\lambda$, $\lambda_{\min}(\widetilde{K}_{2m_{\lambda}}^{(-)})<\lambda$ and $\lambda_{\min}(G(\mathfrak{f},p))\leq\lambda_{\min}(G(\mathfrak{f},p_{\mathfrak{f}}))<\lambda$, then $(G,\sigma)$ is $\{\widetilde{K}_{2m_{\lambda}}^{(0)},\widetilde{K}_{2m_{\lambda}}^{(-)}\}$-switching-free and $(G,\sigma)$ does not contain $G(\mathfrak{f},p)$ as an induced subgraph for any $\mathfrak{f}\in \mathcal{F}(-2)$. 
Thus, by Proposition \ref{associatedProp}, $\mathfrak{g}$ does not contain any Hoffman signed graph in $\mathcal{F}(-2)$ as an induced subgraph. As $d_{\lambda}'\geq d(\lambda,n)$, Theorem \ref{structure th for fixed min ev} $(\rm{i})$ implies that each vertex of $(G,\sigma)$ has at least one fat neighbor, consequently, $\mathfrak{g}$ is fat. Now we show that $\lambda_{\min}(\mathfrak{g})\geq -2$. Suppose, for a contradiction, $\lambda_{\min}(\mathfrak{g})<-2$, then $\mathfrak{g}$ contains a minimal fat Hoffman signed induced subgraph with smallest eigenvalue less than $-2$. This contradicts that $\mathfrak{g}$ does not contain any member of $\mathcal{F}(-2)$. It follows that $\lambda_{\min}(G,\sigma)=-2$, as by Lemma \ref{sub-minev}. Since $(G,\sigma)$ contains at least $1+d_{\lambda}\geq 121$ vertices, signed graph $(G,\sigma)$ is $1$-integrable, by Theorem \ref{BCKW result on -2}.\par
 This completes the proof. \qed

\section*{Acknowledgements}
J.H. Koolen is partially supported by the National Natural Science Foundation of China (No. 12471335), and the 
Anhui Initiative in Quantum Information Technologies (No. AHY150000).
Q. Yang is supported by the National Natural Science Foundation of China (No. 12401460).

\section*{Author Contributions}
All authors contributed equally in this work.

\section*{Financial disclosure}

None reported.

\section*{Conflict of interest}

The authors declare no potential conflict of interests.

\bibliography{CKLYref}

@article {Balla2025equiangular,
    AUTHOR = {Balla, I.},
     TITLE = {Equiangular lines via matrix projection},
   JOURNAL = {Adv. Math.},
  FJOURNAL = {Advances in Mathematics},
    VOLUME = {482},
      YEAR = {2025},
     PAGES = {110620},
      ISSN = {0001-8708,1090-2082},
   MRCLASS = {14C30 (05C50)},
  MRNUMBER = {4976612},
       DOI = {10.1016/j.aim.2025.110620},
       URL = {https://doi.org/10.1016/j.aim.2025.110620},
}

@article {Bannai1983an,
    AUTHOR = {Bannai, E. and Bannai, E. and Stanton, D.},
     TITLE = {An upper bound for the cardinality of an {$s$}-distance subset in real {E}uclidean space. {II}},
   JOURNAL = {Combinatorica},
  FJOURNAL = {Combinatorica. An International Journal of the J\'anos Bolyai
              Mathematical Society},
    VOLUME = {3},
      YEAR = {1983},
    NUMBER = {2},
     PAGES = {147--152},
      ISSN = {0209-9683},
   MRCLASS = {52A37 (05B25 51D20)},
  MRNUMBER = {726452},
MRREVIEWER = {K.\ J.\ Falconer},
       DOI = {10.1007/BF02579288},
       URL = {https://doi.org/10.1007/BF02579288},
}

@article {belardo2018open,
    AUTHOR = {Belardo, F. and Cioab\u{a}, S. M. and Koolen,
              J. H. and Wang, J.-F.},
     TITLE = {Open problems in the spectral theory of signed graphs},
   JOURNAL = {Art Discrete Appl. Math.},
  FJOURNAL = {The Art of Discrete and Applied Mathematics},
    VOLUME = {1},
      YEAR = {2018},
    NUMBER = {2},
     PAGES = {\#P2.10},
      ISSN = {2590-9770},
   MRCLASS = {05C50 (05C22)},
  MRNUMBER = {3997096},
MRREVIEWER = {Leila\ Parsaei Majd},
       DOI = {10.26493/2590-9770.1286.d7b},
       URL = {https://doi.org/10.26493/2590-9770.1286.d7b},
}

@phdthesis{Blokhuis1984Few,
    author = {Blokhuis, A.},
    title = {Few-distance sets},
    school = {Technische Hogeschool Eindhoven},
    year = {1984},
  url={https://doi.org/10.6100/IR53747}
}

@article{Gavrilyuk2021signed,
author = {Gavrilyuk, A. L. and Munemasa, A. and Sano, Y. and Taniguchi, T.},
title = {Signed analogue of line graphs and their smallest eigenvalues},
journal = {J. Graph Theory},
fjournal = {Journal of Graph Theory},
volume = {98},
number = {2},
pages = {309-325},
year = {2021},
doi = {https://doi.org/10.1002/jgt.22699},
url = {https://onlinelibrary.wiley.com/doi/abs/10.1002/jgt.22699},
eprint = {https://onlinelibrary.wiley.com/doi/pdf/10.1002/jgt.22699},
}

@article{glazyrin2018upper,
    AUTHOR = {Glazyrin, A. and Yu, W.-H.},
     TITLE = {Upper bounds for {$s$}-distance sets and equiangular lines},
   JOURNAL = {Adv. Math.},
  FJOURNAL = {Advances in Mathematics},
    VOLUME = {330},
      YEAR = {2018},
     PAGES = {810--833},
      ISSN = {0001-8708,1090-2082},
   MRCLASS = {52C10},
  MRNUMBER = {3787558},
MRREVIEWER = {Konrad\ J.\ Swanepoel},
       DOI = {10.1016/j.aim.2018.03.024},
       URL = {https://doi.org/10.1016/j.aim.2018.03.024},
}

@book{godsil2013,
    AUTHOR = {Godsil, C. and Royle, G.},
     TITLE = {Algebraic graph theory},
    SERIES = {Graduate Texts in Mathematics},
    VOLUME = {207},
 PUBLISHER = {Springer-Verlag, New York},
      YEAR = {2001},
     PAGES = {xx+439},
      ISBN = {0-387-95241-1; 0-387-95220-9},
   MRCLASS = {05-02 (05C50 05E30)},
  MRNUMBER = {1829620},
MRREVIEWER = {Robin\ J.\ Wilson},
       DOI = {10.1007/978-1-4613-0163-9},
       URL = {https://doi.org/10.1007/978-1-4613-0163-9},
}

@incollection {hoffman1973on,
    AUTHOR = {Hoffman, A. J.},
     TITLE = {On spectrally bounded graphs},
 BOOKTITLE = {A survey of combinatorial theory},
     PAGES = {277--283},
 PUBLISHER = {North-Holland, Amsterdam-London},
      YEAR = {1973},
   MRCLASS = {05C99},
  MRNUMBER = {376441},
MRREVIEWER = {A.\ J.\ Schwenk},
}

@article{hoffman1977on,
    AUTHOR = {Hoffman, A. J.},
     TITLE = {On graphs whose least eigenvalue exceeds {$-1-\sqrt{2}$}},
   JOURNAL = {Linear Algebra Appl.},
  FJOURNAL = {Linear Algebra and its Applications},
    VOLUME = {16},
      YEAR = {1977},
    NUMBER = {2},
     PAGES = {153--165},
      ISSN = {0024-3795,1873-1856},
   MRCLASS = {05C99},
  MRNUMBER = {469826},
MRREVIEWER = {A.\ J.\ Schwenk},
       DOI = {10.1016/0024-3795(77)90027-1},
       URL = {https://doi.org/10.1016/0024-3795(77)90027-1},
}

@article {hoffman1977onsigned,
    AUTHOR = {Hoffman, A. J.},
     TITLE = {On signed graphs and gramians},
   JOURNAL = {Geometriae Dedicata},
  FJOURNAL = {Geometriae Dedicata},
    VOLUME = {6},
      YEAR = {1977},
    NUMBER = {4},
     PAGES = {455--470},
   MRCLASS = {15A45 (05C99)},
  MRNUMBER = {463211},
MRREVIEWER = {R.\ A.\ Brualdi},
       DOI = {10.1007/BF00147783},
       URL = {https://doi.org/10.1007/BF00147783},
}

@article {Jang2014on,
    AUTHOR = {Jang, H. J. and Koolen, J. and Munemasa, A. and Taniguchi, T.},
     TITLE = {On fat {H}offman graphs with smallest eigenvalue at least
              {$-3$}},
   JOURNAL = {Ars Math. Contemp.},
  FJOURNAL = {Ars Mathematica Contemporanea},
    VOLUME = {7},
      YEAR = {2014},
    NUMBER = {1},
     PAGES = {105--121},
      ISSN = {1855-3966,1855-3974},
   MRCLASS = {05C50 (05C76)},
  MRNUMBER = {3047614},
       DOI = {10.26493/1855-3974.262.a9d},
       URL = {https://doi.org/10.26493/1855-3974.262.a9d},
}

@article{kim2016a,
    AUTHOR = {Kim, H. K. and Koolen, J. H. and Yang, J. Y.},
     TITLE = {A structure theory for graphs with fixed smallest eigenvalue},
   JOURNAL = {Linear Algebra Appl.},
  FJOURNAL = {Linear Algebra and its Applications},
    VOLUME = {504},
      YEAR = {2016},
     PAGES = {1--13},
      ISSN = {0024-3795,1873-1856},
   MRCLASS = {05C50 (05C75)},
  MRNUMBER = {3502527},
MRREVIEWER = {David\ Burns},
       DOI = {10.1016/j.laa.2016.03.044},
       URL = {https://doi.org/10.1016/j.laa.2016.03.044},
}

@article {Koolen2021recent,
    AUTHOR = {Koolen, J. H. and Cao, M.-Y. and Yang, Q.},
     TITLE = {Recent progress on graphs with fixed smallest adjacency
              eigenvalue: a survey},
   JOURNAL = {Graphs Combin.},
  FJOURNAL = {Graphs and Combinatorics},
    VOLUME = {37},
      YEAR = {2021},
    NUMBER = {4},
     PAGES = {1139--1178},
      ISSN = {0911-0119,1435-5914},
   MRCLASS = {05C50 (05C22 05C75 05D99 05E30 11H06)},
  MRNUMBER = {4280318},
       DOI = {10.1007/s00373-021-02296-8},
       URL = {https://doi.org/10.1007/s00373-021-02296-8},
}

@article{vanlint1966,
    AUTHOR = {van Lint, J. H. and Seidel, J. J.},
     TITLE = {Equilateral point sets in elliptic geometry},    
   JOURNAL = {Indag. Math.},
  FJOURNAL = {Indagationes Mathematicae},
    VOLUME = {28},
      YEAR = {1966},
     PAGES = {335--348},
   MRCLASS = {52.50},
  MRNUMBER = {200799},
MRREVIEWER = {L.\ M.\ Blumenthal},
}

@article{woo1995on,
    AUTHOR = {Woo, R. and Neumaier, A.},
     TITLE = {On graphs whose smallest eigenvalue is at least {$-1-\sqrt2$}},
   JOURNAL = {Linear Algebra Appl.},
  FJOURNAL = {Linear Algebra and its Applications},
    VOLUME = {226/228},
      YEAR = {1995},
     PAGES = {577--591},
      ISSN = {0024-3795,1873-1856},
   MRCLASS = {05C50},
  MRNUMBER = {1344587},
MRREVIEWER = {Qiao\ Li},
       DOI = {10.1016/0024-3795(95)00245-M},
       URL = {https://doi.org/10.1016/0024-3795(95)00245-M},
}

@article {ramsey1930,
author = {Ramsey, F. P.},
title = {On a Problem of Formal Logic},
journal = {Proceedings of the London Mathematical Society},
volume = {s2-30},
number = {1},
pages = {264-286},
doi = {https://doi.org/10.1112/plms/s2-30.1.264},
url = {https://londmathsoc.onlinelibrary.wiley.com/doi/abs/10.1112/plms/s2-30.1.264},
eprint = {https://londmathsoc.onlinelibrary.wiley.com/doi/pdf/10.1112/plms/s2-30.1.264},
year = {1930}
}

\end{document}